\newtheorem{theorem}{Theorem}[section]
\newtheorem{main}[theorem]{Main Theorem}
\newtheorem{lemma}[theorem]{Lemma}
\newtheorem{proposition}[theorem]{Proposition}
\newtheorem{corollary}[theorem]{Corollary}
\theoremstyle{definition}
\newtheorem{definition}[theorem]{Definition}
\newtheorem{notation}[theorem]{Notation}
\newtheorem{reminder}[theorem]{Reminder}
\newtheorem{example}[theorem]{Example}
\newtheorem{experiment}[theorem]{Experiment}
\newtheorem{question}[theorem]{Question}
\newtheorem{remark}[theorem]{Remark}
\numberwithin{equation}{section}
\newcommand{\igb}{\gamma}
\newcommand{\sw}{{\nu + s \omega}}
\newcommand{\concept}[1]{\emph{#1}}
\newcommand{\filt}[1]{\mathcal{#1}}
\newcommand{\Fi}{\mathrm{F}}
\newcommand{\Gr}{\mathrm{G}}
\newcommand{\grad}[1]{\Gr\mathcal{#1}}
\newcommand{\und}{\,\mathop{\wedge}\,}
\newcommand{\dann}{\Rightarrow}
\newcommand{\epi}{\twoheadrightarrow}
\newcommand{\mono}{\rightarrowtail}
\renewcommand{\vec}{\mathop{\vert}}
\renewcommand{\mod}{ / } %{\mathop{/}}
\renewcommand{\epsilon}{\varepsilon}
\renewcommand{\varsigma}{\sigma}
\renewcommand{\limsup}{\overline{\lim}}
\renewcommand{\Im}{\Img}
\newcommand{\NN}{\mathbb{N}}
\newcommand{\ZZ}{\mathbb{Z}}
\newcommand{\RR}{\mathbb{R}}
\newcommand{\CC}{\mathbb{C}}
\newcommand{\MMM}{M}
\newcommand{\NNN}{N}
\newcommand{\nth}{n^{\text{th}}}
\newcommand{\ith}{i^{\text{th}}}
\renewcommand{\th}[1]{{#1}^{\text{th}}}
\newcommand{\rad}[1]{{\surd\hspace{1pt}#1}} %{\sqrt{#1}}
\newcommand{\dd}{\partial}
\newcommand{\csi}{\xi}
\newcommand{\wo}{\smallsetminus}
\DeclareMathOperator{\NO}{\mathcal{O}}
\DeclareMathOperator{\lt}{lm}
\DeclareMathOperator{\LT}{LM}
\DeclareMathOperator{\Img}{Im}
\DeclareMathOperator{\Spec}{Spec}
\DeclareMathOperator{\Var}{Var}
\DeclareMathOperator{\supp}{supp}
\DeclareMathOperator{\End}{End}
\DeclareMathOperator{\Ext}{Ext}
\DeclareMathOperator{\Kdim}{Kdim}
\DeclareMathOperator{\GKdim}{GKdim}
\DeclareMathOperator{\len}{len}
\DeclareMathOperator{\lev}{\deg}
\begin{document}

% \title[short text for running head]{full title}
\title{Critical Cones of Characteristic Varieties}
%\title{Critical Cones of Characteristic Varieties over Weyl Algebras}

%    Only \author and \address are required; other information is
%    optional.  Remove any unused author tags.

%    author one information
% \author[short version for running head]{name for top of paper}
\author{Roberto Boldini}
\address{%
%Roberto Boldini, 
Universit\"at Z\"urich,
Institut f\"ur Mathematik,
Winterthurerstr.~190, 
CH-8057 Z\"urich} %, 
%Switzerland}
%\curraddr{}
\email{roberto.boldini@math.uzh.ch}
\thanks{My gratitude to Prof.~em.~Markus Brodmann and Prof.~Joseph Ayoub, University of Zurich}

%    author two information
% \author{}
% \address{}
% \curraddr{}
% \email{}
% \thanks{}

%    \subjclass is required.
%\subjclass[2000]{Primary
%% 12H05
%% 13A02
%13C15
%% 13E05  
%% 13F20
%13N10
%% 13N15
%13P10 
%14A10
%% 16E45
%% 16P40
%16P90
%16S32
%% 16W50
%16W70}
%    The 2010 edition of the Mathematics Subject Classification is
%    now available.  If you are citing a classification from the
%    new scheme, use the following input coding instead.
\subjclass[2010]{Primary 
% 12H05
% 13A02
13C15
% 13E05  
% 13F20
13N10
% 13N15
13P10 
%%%%14A10
% 16E45
% 16P40
16P90
%%%%16S32
% 16W50
16W70}

\date{on July $\text{21}^\text{st}$, 2010}

% \dedicatory{}

%    "Communicated by" -- provide editor's name; required.
\commby{Prof.~Robert Guralnick}

\relpenalty 9999 %10000
\binoppenalty 10000
\hyphenpenalty 0

%    Abstract is required.
\begin{abstract}
Let $M$ be a left module over a Weyl algebra in characteristic zero.
Given natural weight vectors $\nu$ and $\omega$,
we show that the characteristic varieties arising from filtrations
with weight vector $\nu+s\omega$ stabilize to a certain variety determined
by $M$, $\nu$, $\omega$
as soon as the natural number $s$ grows beyond a bound which depends only on $M$ and $\nu$
but not on $\omega$.

As a consequence, in the notable case when $\nu$ is the standard weight vector,
these characteristic varieties deform to the critical cone of the $\omega$-characteristic
variety of $M$ as soon as $s$ grows beyond an invariant of $M$.

As an application, we give a new, easy, non-homological proof of a classical result, namely,
that the $\omega$-characteristic varieties of $M$ all have the same Krull dimension.

The set of all $\omega$-characteristic varieties of $M$ is finite.
We provide an upper bound for its cardinality in terms of supports of
universal Gr\"obner bases in the case when $M$ is cyclic.
By the above stability result we conjecture a second upper bound
in terms of total degrees of universal Gr\"obner bases and of Fibonacci numbers
in the case when $M$ is cyclic over the first Weyl algebra.
\end{abstract}

\maketitle

%    Text of article.
\section*{Introduction}

\noindent
Let $n\in\NN$, let $W$ be the $\nth$ Weyl algebra over a field $K$ of characteristic $0$,
and let $\Omega=\{\omega\in\NN_0^{2n}\mid\omega_i+\omega_{i+n}>0 \text{ for } 1\le i\le n\}$.
For each $\omega\in\Omega$
consider the $\omega$-degree filtration $\Fi^\omega W=(\Fi_i^\omega W)_{i\in\ZZ}$ of $W$
and any  good $\Fi^\omega W$-filtration $\Fi^\omega M=(\Fi_i^\omega M)_{i\in\ZZ}$ of
a left $W$-module $M$. 
We construct %the associated graded ring
$\Gr^\omega W=\bigoplus_{i\in\ZZ}\Fi_i^\omega W\mod \Fi_{i-1}^\omega W$
and %the associated graded module
$\Gr^\omega M=\bigoplus_{i\in\ZZ}\Fi_i^\omega M\mod \Fi_{i-1}^\omega M$.
Then %, indeed,
$\Gr^\omega W$ is a ring canonically isomorphic to the commutative
polynomial ring $K[X,Y]$ in the indeterminates $X_1,\ldots,X_n$ and $Y_1,\ldots,Y_n$,
and $\Gr^\omega M$ is a finitely generated $K[X,Y]$-module.
For a fixed $\omega\in\Omega$, the radical ideal $\rad{(0:\Gr^\omega M)}$ of $K[X,Y]$
is independent of the choice of a good $\Fi^\omega W$-filtration $\Fi^\omega M$ of $M$.
So we may define the $\omega$-characteristic variety of $M$ as
the closed subset $\mathcal{V}^\omega(M)={\Var(0:\Gr^\omega M)}$
of $\Spec(K[X,Y])$.

Similarly, we consider the $\nu$-degree filtrations $\Fi^\nu K[X,Y]$ of $K[X,Y]$,
$\nu\in\NN_0^{2n}$, and good $\Fi^\nu K[X,Y]$-filtrations $\Fi^\nu N$
of $K[X,Y]$-modules $N$ and construct the rings $\Gr^\nu K[X,Y]$,
canonically isomorphic to $K[X,Y]$, and the finitely generated $K[X,Y]$-modules $\Gr^\nu N$.
Again, for a fixed $\nu\in\NN_0^{2n}$, the radical ideal $\rad{(0:\Gr^\nu N)}$
of $K[X,Y]$ does not depend on the choice of a good $\Fi^\nu K[X,Y]$-filtration $\Fi^\nu N$
of~$N$.

The main result of this paper is that
for each $\nu\in\NN_0^{2n}$ there exists $s_0\in\NN_0$
such that
for all $s\in\NN$ with $s>s_0$ and all $\omega\in\Omega$
in $K[X,Y]$ it holds
\begin{equation}\tag{A}\label{intro main}
\textstyle{
\rad{(0:\Gr^\nu\Gr^\omega M)}=\rad{(0:\Gr^{\nu+s\omega}M)}.
}
\end{equation}

Observe that $s_0$ does not depend on $\omega$.
We can choose the lowest such $s_0$ in $\NN_0$, denoted $\kappa_\nu(M)$.
If $L$ is a left ideal of~$W$, we give an upper bound for $\kappa_\nu(W/L)$ in
terms of total degrees of elements of universal Gr\"obner bases of $L$, more precisely,
\begin{equation}\tag{B}
\textstyle{
\kappa_\nu(W/L)\le\gamma_\nu(L),
}
\end{equation}
where
\begin{equation*}
\textstyle{
\gamma_\nu(L)=\inf_U{}\sup_{u\in U}{}\deg^\nu(u),
}
\end{equation*}
the infimum being taken over all universal Gr\"obner bases $U$ of $L$.

A case with evident geometrical meaning is when $\nu=(1)=(1,\ldots,1)\in\NN_0^{2n}$.
The equality \eqref{intro main} says that the ``affine deformations''
$\mathcal{V}^{(1)+s\omega}(M)$ of $\mathcal{V}^\omega(M)$
stabilize for large $s$ to the critical cone 
$\mathcal{C}^\omega(M)=\Var(0:\Gr^{(1)}\Gr^\omega M)$
of $\mathcal{V}^\omega(M)$.
Thus the minimal limit beyond which this occurs, namely,
$\kappa(M)=\smash{\kappa_{(1)}(M)}$,
is ---surprisingly--- an invariant of $M$.
Upper bounds for the greatest total degree of Gr\"obner bases and 
of reduced Gr\"obner bases of a left ideal $L$ of $W$ are given in \cite{Ley} in terms of
greatest total degrees of systems of generators of $L$, and hence, combining both results,
we obtain an upper bound for $\kappa(W\mod L)$ also in such terms.

% The critical cone $C$ of an affine variety $V\subseteq\mathbb{A}^{r+1}=\mathbb{A}_F^{r+1}$
% over an algebraically closed field $F$
% is the cone with vertex at the origin $O\in\mathbb{A}^{r+1}$ tangent to $V$ at infinity.
% In other words, $C$ consists of all lines through $O$ along whose directions
% $V$ goes to infinity.
% To construct $C$, we map $V$ to $\mathbb{P}^{\,r}=\mathbb{P}_{F}^{\,r}$ by the canonical
% projection
% $\pi:\mathbb{A}^{r+1}\wo O\epi\mathbb{P}^{\,r}$ given by $\pi(P)=[P]$ and get $V'=\pi(V)$,
% throw away the affine part of $V'$
%  %lying on any standard affine chart $\mathbb{U}_i\mathbb{P}^{\,r}$ of $\mathbb{P}^{\,r}$
% and obtain $V''=V'\wo\smash{\bigcup_{0\le i\le r} \mathbb{U}_i\mathbb{P}^{\,r}}$,
% construct the cone in $\mathbb{A}^{r+1}$ consisting of the lines through $O$ and the points
% of $V''$ and end up with $C=\pi^{-1}(V'')\cup O\subseteq\mathbb{A}^{r+1}$.
The critical cone $C$ of an affine variety $V\subseteq\mathbb{A}^{r}$
over an algebraically closed field $F$ is the cone with vertex at the origin
$O\in\mathbb{A}^{r}$ tangent to $V$ at infinity.
In other words, $C$ consists of all lines through $O$ along whose directions
$V$ goes to infinity.
To construct $C$, we choose an injection $\iota:\mathbb{A}^{r}_{}\mono\mathbb{P}^{r}$ of
$\mathbb{A}^{r}$ into the projective space $\mathbb{P}^{r}$ over $F$
and put $$C=\textstyle{\iota^{-1}(\bigcup_{P\in\overline{\iota(V)}\wo\iota(V)}\ell_{P})},$$
where $\smash{\overline{\iota(V)}}$ is the projective closure of $\iota(V)$ in $\mathbb{P}^{r}$
and $\ell_{P}$ is the projective line through the points $\iota(O)$ and $P$.
One has that $C$ does not depend on the choice of~$\iota$.
Algebraically, if $I$ is any ideal of $F[Z_1,\ldots,Z_{r}]$ that defines $V$,
then $C$ is defined by the ideal $J$ generated by the homogeneous components
of greatest total degree of the polynomials in $I$, that is,
$J$ is the leading form ideal of $I$ by total degree.
Again, $C$ does not depend on the choice of $I$.

As a further consequence of the equality \eqref{intro main},
we are able to give an easy proof that
$\Kdim_{K[X,Y]}\Gr^\omega M=\GKdim_W M$
for all $\omega\in\Omega$.
Thus,
without having to appeal to sophisticated homological methods as in classical proofs,
we have shown in particular that the characteristic varieties $\mathcal{V}^\omega(M)$,
$\omega\in\Omega$, all have the same Krull dimension.
The key point is that \eqref{intro main} allows in some sense to pass from non-finite to
finite filtrations, and 
Gelfand--Kirillov dimension behaves well with  finite  discrete  filtrations:
$\GKdim_{\Gr^\omega W}\Gr^\omega M=\GKdim_{W}M$ whenever $\Fi^\omega M$ is
finite and discrete.
The second point is that Gelfand--Kirillov dimension and Krull dimension agree in the category 
of finitely generated modules over any fixed finitely generated algebra over a field.

%It is known that the number of distinct $\omega$-characteristic varieties of $M$ is finite,
Fixed a left ideal $L$ of $W$, we give an upper bound for the number $\chi(L)$ of distinct
ideals $\Gr^\omega L$, $\omega\in\Omega$, and hence of distinct
$\omega$-characteristic varieties of $W\mod L$, namely,
\begin{equation}\tag{C}
\textstyle{
\chi(L)\le\smash{\inf_{U}^{}\prod_{u\in U}\sum_{0\le k\le\#\supp(u)}\binom{\#\supp(u)}{k}},
}
\end{equation}
the infimum being taken over all universal Gr\"obner bases of $L$.
The equality \eqref{intro main} let us conjecture a second upper bound in the case
when $W$ is the $1^\text{st}$ Weyl algebra, namely,
\begin{equation}\tag{D}\label{D}
\textstyle{
\chi(L)\le\smash{2^{1+\gamma(L)}+1},
}
\end{equation}
where $\gamma(L)=\gamma_{(1)}(L)$.
As mentioned afore,
by~\cite{Ley} it follows an upper bound for $\gamma(L)$ in terms of total degrees of
generators of~$L$.
%Also here we do not forget \cite{Ley}.
% Now one can use \cite{Ley} as done above.
% By \cite{Ley}, this latter upper bound can be expressed also in terms of total degrees
% of systems of generators of~$L$.
%As it is  known, the smallest such systems consist at most of two elements of~$L$. 
% Since $\gamma(L)$ has been used here as upper bound for $\kappa(W\mod L)$,
% one can ask whether 
% \begin{equation}\tag{E}
% \textstyle{
% \chi(L)\le\smash{(1+2n)(2n)^{1+\kappa(W/L)}}.
% }
% \end{equation}
% We do not know the answer.

In Section~\ref{sec1} we recall some known facts about filtered rings and modules as well as
their associated graded rings and modules.
% To keep our treatment self-contained,
% and because these results are scattered over a vaste literature,
% see for instance \cite{H-O},
% we give a proof of the few statements we need.
% Some simple results,
% namely, \ref{1.21}, \ref{1.24}, \ref{1.26}, and \ref{1.27},
% about the behaviour of filtered modules and their associated graded modules
% with respect to taking radicals,
% we did not find in the literature.

In Section~\ref{sec2} we introduce Weyl algebras and state some of their basic properties,
which are a generalization of results that can be found for instance in \cite{Cou}.
The proofs remain very similar, and we omit them here.
%They are going to be at disposal on our website.

Section~\ref{sec3} is about Gr\"obner bases in Weyl algebras.
Here, too, we recall known facts, important in the next section,
in particular the existence of universal Gr\"obner bases for left ideals,
and a very tight relation between the Gr\"obner bases of $\omega$-filtered left ideals 
and the Gr\"obner bases of their associated graded ideals.

In Section~\ref{sec4} we define $\omega$-characteristic varieties of a left $W$-module $M$
as some particular affine spectra, and not as algebraic zero sets,
as it is usual, for there is no reason here to work only over algebraically closed fields.
Then we prove our main result~\eqref{intro main} about the defining annihilators of such
varieties.

In section~\ref{sec5} we apply \eqref{intro main} to give an easy proof of the known result
that the $\omega$-characteristic
varieties of $M$ all have the same Gelfand--Kirillov and Krull dimension as $\omega$ varies
in $\Omega$, namely, equal to the Gelfand--Kirillov dimension of~$M$.

Finally in Section~\ref{sec6} we perform a computer experiment in order to try to classify
the $\omega$-characteristic varieties of $M$ in the case when $M=W\mod L$ for a left ideal
$L$ of $W$.
This experiment let us conjecture an upper bound for their number, namely~\eqref{D}.

\section{Filtrations and Gradings}\label{sec1}

\noindent
In this section we give a small review on filtered rings and modules and their associated
graded objects.
Most of this material can be found or inferred from the books of Constantin N\u ast\u asescu,
Freddy van Oystaeyen, and Huishi Li, among which we particularly appreciate~\cite{H-O}.
Besides giving the very short proof of \ref{1.21}, we provide a proof of \ref{1.26}
and \ref{1.27}, too,
which we did not find in the literature.

\begin{definition}
A \concept{filtration} $\filt{R}$ of a ring $R$ is a family $(\Fi_i\filt{R})_{i\in\ZZ}$
of additive subgroups $\Fi_i\filt{R}$ of $R$
enjoying the properties:
$\text{(a)}~{R=\textstyle{\bigcup_{i\in\ZZ}}\Fi_i\filt{R}}$,
$\text{(b)}~{\Fi_{i-1}\filt{R}\subseteq\Fi_{i}\filt{R}}$,
$\text{(c)\,}~{r\in\Fi_i\filt{R} \und s\in\Fi_j\filt{R} \dann rs\in\Fi_{i+j}\filt{R}}$,
$\text{(d)}~{i<0\dann\Fi_i\filt{R}=0}$,
$\text{(e)}~{1\in\Fi_0\filt{R}}$,
so that $\Fi_0\filt{R}$ is a subring of $R$ and each $\Fi_i\filt{R}$ is a left
$\Fi_0\filt{R}$-submodule of $R$.

If the ring $R$ is provided with a filtration $\filt{R}$,
we say that the ordered pair ${(R,\filt{R})}$ is a \concept{filtered ring}.

Let ${(R,\filt{R})}$ and ${(S,\filt{S})}$ be filtered rings.
A \concept{homomorphism of $(R,\filt{R})$ in $(S,\filt{S})$} is a ring homomorphism  
$\phi$ of $R$ in $S$ such that $\phi(\Fi_i\filt{R})\subseteq\Fi_i\filt{S}$.
\end{definition}

\begin{definition}
Let ${(R,\filt{R})}$ be a filtered ring.
An \concept{$\filt{R}$-filtration} $\filt{M}$ of a left $R$-module $M$ is a family
$(\Fi_i\filt{M})_{i\in\ZZ}$ of additive subgroups $\Fi_i\filt{M}$ of $M$
with the properties:
$\text{(a)}~{M=\textstyle{\bigcup_{i\in\ZZ}}\Fi_i\filt{M}}$,
$\text{(b)}~{\Fi_{i-1}\filt{M}\subseteq\Fi_{i}\filt{M}}$,
$\text{(c)\,}~{r\in\Fi_i\filt{R} \und m\in\Fi_j\filt{M} \dann rm\in\Fi_{i+j}\filt{M}}$,
so that each $\Fi_i\filt{M}$ is a left $\Fi_0\filt{R}$-submodule of $M$.

If the left $R$-module $M$ is provided with an $\filt{R}$-filtration $\filt{M}$,
we say that the ordered pair ${(M,\filt{M})}$ is an \concept{$\filt{R}$-filtered left
$R$-module} or simply a \concept{left $(R,\filt{R})$-module}.
Observe that a filtered ring is also a filtered left module over itself.

Let ${(M,\filt{M})}$ and ${(N,\filt{N})}$ be left ${(R,\filt{R})}$-modules.
An \concept{${(R,\filt{R})}$-homomorphism of $(M,\filt{M})$ in $(N,\filt{N})$}
is a left $R$-module homomorphism
$\phi$ of $M$ in $N$ such that $\phi(\Fi_i\filt{M})\subseteq\Fi_i\filt{N}$.
%
%According to the definitions given for instance in~\cite{H-O},
%our module filtrations are exhaustive and our ring filtrations exhaustive and positive.
\end{definition}

\begin{definition}\label{filtrazione indotta}
Let ${(R,\filt{R})}$ be a filtered ring and
${(M,\filt{M})}$ be a left $(R,\filt{R})$-module.
Let $N$ be a left $R$-submodule of $M$.
There exist canonically \concept{induced $\filt{R}$-filtrations}
$\filt{N}=(\Fi_i\filt{M}\cap N)_{i\in\ZZ}$
of $N$
and
$\filt{M}\mod\filt{N}=(\Fi_i\filt{M}+N\mod N)_{i\in\ZZ}$ of $M\mod N$.
In this situation we call $(N,\filt{N})$ a \concept{submodule of $(M,\filt{M})$}
and $(M/N,\filt{M}/\filt{N})$ a \concept{quotient module of $(M,\filt{M})$}.
Similarly, if $I$ is a left ideal of $R$
and $\filt{I}$ is the induced $\filt{R}$-filtration of $I$,
we say that $(I,\filt{I})$ is a \concept{left ideal of $(R,\filt{R})$}.
\end{definition}

\begin{definition}\label{associato}
Let $(R,\filt{R})$ be a filtered ring.
The \concept{associated graded ring $\grad{R}$ of $R$ with respect to $\filt{R}$} is
%the direct sum of commutative groups
the commutative group
$\bigoplus_{i\in\ZZ}\Fi_{i}\filt{R}\mod\Fi_{i-1}\filt{R}$
provided with a multiplication given by
$(r_i+\Fi_{i-1}\filt{R})_{i\in\ZZ}\;(s_j+\Fi_{j-1}\filt{R})_{j\in\ZZ}
=(\sum_{i+j=k}r_is_j+\Fi_{k-1}\filt{R})_{k\in\ZZ}$,
which indeed turns
$\grad{R}$ into a ring.

Let $(M,\filt{M})$ be a left $(R,\filt{R})$-module.
The \concept{associated graded left $\grad{R}$-module $\grad{M}$ of $M$ with respect to
$\filt{M}$} is
%the direct sum of commutative groups
the commutative group
$\bigoplus_{i\in\ZZ}\Fi_{i}\filt{M}\mod\Fi_{i-1}\filt{M}$
with a $\grad{R}$-action defined by
$(r_i+\Fi_{i-1}\filt{R})_{i\in\ZZ}\;(m_j+\Fi_{j-1}\filt{M})_{j\in\ZZ}
=(\sum_{i+j=k}r_im_j+\Fi_{k-1}\filt{M})_{k\in\ZZ}$,
which indeed turns
%the commutative additive group
$\grad{M}$ into a left $\grad{R}$-module.

$\grad{R}$ is precisely the associated graded left $\grad{R}$-module of $R$ with respect to 
$\filt{R}$.
We denote the $\ith$ homogeneous component $\Fi_i\filt{M}\mod\Fi_{i-1}\filt{M}$ of
$\grad{M}$ by $\Gr_i\filt{M}$.
Then $\Gr_0\filt{R}$ is a subring of $\grad{R}$ and  each $\Gr_{i}\filt{M}$ is a 
left $\Gr_0\filt{R}$-submodule of $\grad{M}$.
\end{definition}

\begin{remark}\label{filtrazione stretta}\label{sequenza esatta}
Let $(R,\filt{R})$ be a filtered ring, $(X,\filt{X})$ and $(Y,\filt{Y})$ be left
$(R,\filt{R})$-modules, and $\phi$ be a homomorphism of $(X,\filt{X})$ in $(Y,\filt{Y})$.
We have canonical  $\Fi_0\filt{R}$-module homomorphisms
$\Fi_i\filt{X}\mod\Fi_{i-1}\filt{X}\rightarrow\Fi_i\filt{Y}\mod\Fi_{i-1}\filt{Y}$
whose direct sum is a graded left $\grad{R}$-module homomorphism
$\grad{X}\rightarrow\grad{Y}$.

If $\smash{(N,\filt{N}) \mono (M,\filt{M}) \epi (P,\filt{P})}$ is a
\concept{strict exact sequence of $(R,\filt{R})$-modules}, that is,
$\smash{N\overset{\nu}{\mono}M\overset{\pi}{\epi}P}$ is an exact sequence of $R$-modules with %such that
$\nu(\Fi_i\filt{N})=\Fi_i\filt{M}\cap\Im(\nu)$ and
$\pi(\Fi_i\filt{M})=\Fi_i\filt{P}\cap\Im(\pi)$,
then there is an exact sequence $\smash{\grad{N} \mono \grad{M}
 \epi \grad{P}}$ of graded left $\grad{R}$-modules.
 
In particular, if $(N,\filt{N})$ is a submodule of $(M,\filt{M})$
and $(M/N,\filt{M}/\filt{N})$ is a quotient module of $(M,\filt{M})$,
%see \ref{filtrazione indotta},
then we obtain an exact sequence
%of graded left $\grad{R}$-modules
$\grad{N}\mono\grad{M}\epi\grad{M/N}$,
so that $\grad{M/N}\cong\grad{M}\mod\grad{N}$ as graded left $\grad{R}$-modules.
%See~\cite[I.4.2,~Theorem~4]{H-O} or~\cite[D.III.4]{N-O}.
\end{remark}

\begin{remark}\label{monotonia stretta}
Let $(R,\filt{R})$ be a filtered ring, $(M,\filt{M})$ be a left $(R,\filt{R})$-module,
and $(N,\filt{N})$ be a submodule of $(M,\filt{M})$.
By \ref{filtrazione stretta} we may write ${\grad{N}\subseteq\grad{M}}$.

Assume that $N\subsetneq M$.
Then the set $I=\{i\in\ZZ\mid\Fi_i\filt{M}\nsubseteq N\}$ is non-empty.
Assume further that the $\filt{R}$-filtration $\filt{M}$ is \concept{discrete},
that is, $\Fi_i\filt{M}=0$ for ${i\ll 0}$.
Then $I$ admits a unique least element $i_0$. % $i_0=\min{}I\in I$.
Suppose that $\grad{N}=\grad{M}$.
Then $\grad{M\mod N}\cong\grad{M}\mod\grad{N}=0$,
so $(\Fi_i\filt{M}+N)\mod(\Fi_{i-1}\filt{M}+N)\cong\Gr_i\filt{M\mod N}=0$ for all $i\in\ZZ$,
hence $\Fi_i\filt{M}\subseteq\Fi_i\filt{M}+N=\Fi_{i-1}\filt{M}+N$ for all $i\in\ZZ$.
In particular
$\Fi_{i_0}\filt{M}\subseteq\Fi_{i_0-1}\filt{M}+N\subseteq N+N=N$,
thus $i_0\notin I$, a contradiction.

Therefore, under the assumption that $\filt{M}$ is discrete, we have the implication
$N\subsetneq M\dann\grad{N}\subsetneq\grad{M}$,
the property of \concept{strict monotony} of  $\Gr$ for discrete filtrations.
% 
% Without discreteness, strict monotony of $\Gr$ is no longer valid in general.
% As a simple example,
% take any $M\ne 0$ endowed with its trivial filtration $\filt{M}$ given by
% $\Fi_i\filt{M}=M$. % for all $i\in\ZZ$.
% %if $\filt{M}$ is the \concept{trivial} filtration of $M$, that is,
% %$\Fi_i\filt{M}=M$ for all $i\in\ZZ$,
% %then it always holds ${\grad{M}=0=\grad{N}}$.
\end{remark}

% \begin{remark}
% Let $(R,\filt{R})$ be an filtered ring and $(I,\filt{I})$ be a left ideal of $(R,\filt{R})$.
% By \ref{filtrazione stretta} we may identify the left $\grad{R}$-module $\grad{I}$ with a left
% ideal of $\grad{R}$ and write $\grad{I}\subseteq\grad{R}$.
% Under this identification the homogeneous elements of $\grad{I}$ are cosets of the form
% $x+\Fi_{i-1}\filt{R}$ with $x\in\Fi_i\filt{I}$ and no longer $x+\Fi_{i-1}\filt{I}$.
% \end{remark}

\begin{remark}\label{annullatore}
Let $(R,\filt{R})$ be a filtered ring.
Assume that $\filt{R}$ is \concept{commutative}, that is,
$r\in\Fi_i\filt{R} \und s\in\Fi_j{R} \dann rs-sr\in\Fi_{i+j-1}\filt{R}$.
Then the ring $\grad{R}$ is commutative.
In this situation let $(I,\filt{I})$ be a left ideal of $(R,\filt{R})$
and consider the quotient module $(R/I,\filt{R/I})$ of $(R,\filt{R})$.
Then $\grad{I}=(0:\grad{R/I})$ as ideals of $\grad{R}$ by \ref{filtrazione stretta}.
\end{remark}

\begin{definition}\label{mappa simbolica}
Let $(R,\filt{R})$ be a filtered ring and let $(M,\filt{M})$ be a left $(R,\filt{R})$-module.
We define the \concept{$\filt{M}$-degree} function 
${\lev^\filt{M}:M\rightarrow\ZZ\cup\{-\infty\}}$ by
$\lev^\filt{M}(m)={\inf{}\{ i\in\ZZ \mid m\in\Fi_i\filt{M} \}}$ for all $m\in M$.
In particular, $\deg^\filt{M}(0)=-\infty$.
If $(N,\filt{N})$ is a left submodule of $(M,\filt{M})$,
then $\lev^\filt{N}(n)=\lev^\filt{M}(n)$ for all $n\in N$.
Further it holds
$\lev^\filt{M}(m+n)\le\max{}\{\lev^\filt{M}(m),\lev^\filt{M}(n)\}$
and
$\lev^\filt{M}(rm)\le\lev^\filt{R}(r)+\lev^\filt{M}(m)$
for all $r\in R$ and all $m,n\in M$.

We convene that $\Fi_{-\infty}\filt{M}=0$ and $\Gr_{-\infty}\filt{M}=0$.
For each $i\in\ZZ\cup\{-\infty\}$ %with the obvious arithmetic
let us consider the left $\Fi_0\filt{R}$-module epimorphism
$\sigma_i^\filt{M}:\Fi_i\filt{M}\rightarrow\Gr_i\filt{M}$
given by
$m\mapsto m+\Fi_{i-1}\filt{M}$.
Now we  define the
\concept{$\filt{M}$-symbol map $\sigma^\filt{M}:M\rightarrow\Gr\filt{M}$ of $M$}
by 
$\smash{m\mapsto\sigma_{d}^\filt{M}(m)}$ where $d=\lev^\filt{M}(m)$.
We call $\sigma^\filt{M}(m)$ the \concept{$\filt{M}$-symbol of $m$}.
If $(N,\filt{N})$ is a left submodule of $(M,\filt{M})$,
then the image of $\sigma^\filt{N}(n)$ in $\grad{M}$ is precisely $\sigma^\filt{M}(n)$.
Moreover, in general, $\sigma^\filt{M}$ is not additive, and $\sigma^\filt{M}$ is
multiplicative precisely when
$\deg^\filt{M}(rm)=\deg^\filt{R}(r)+\deg^\filt{M}(m)$ for all $r\in R$ and all $m\in M$.
\end{definition}

\begin{remark}\label{simbolo 1}
Let $(R,\filt{R})$ be a filtered ring,
$(M,\filt{M})$ be a left $(R,\filt{R})$-module,
and $(N,\filt{N})$ be a submodule of $(M,\filt{M})$.
The image $\sigma^\filt{N}(N)$ consists precisely of all homogeneous elements
of the graded left $\grad{R}$-module $\grad{N}$,
whereas $\sigma^\filt{M}(N)$ consists of the homogeneous elements
of the graded left $\grad{R}$-submodule $\grad{N}$ of $\grad{M}$.

In particular $\grad{N}$ is generated by $\sigma^\filt{N}(N)$ as a left $\grad{R}$-module,
and $\grad{N}$ is generated by $\sigma^\filt{M}(N)$ as a left $\grad{R}$-submodule
of $\grad{M}$,
and for any subset $U$ of $N$ we have that $\sigma^\filt{N}(U)$ generates $\grad{N}$ as a
left $\grad{R}$-module
if and only if $\sigma^\filt{M}(U)$ generates $\grad{N}$ as a left $\grad{R}$-submodule
of $\grad{M}$.
\end{remark}

\begin{proposition}\label{simbolo 2}
Let $(R,\filt{R})$ be a commutatively filtered ring.
Let $I$ be a left ideal of~$R$ and $\filt{I}$ and $\filt{R\mod I}$ be the induced
$\filt{R}$-filtrations of~$I$ and $R\mod I$, respectively.
Then $(0 : \grad{R\mod I})=\grad{I}=\sum_{x\in I}\grad{R}\;\sigma^\filt{R}(x)$ as ideals 
of $\hspace{1pt}\grad{R}$.
\end{proposition}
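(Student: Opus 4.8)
The plan is to prove the chain of equalities $(0 : \grad{R\mod I}) = \grad{I} = \sum_{x\in I}\grad{R}\,\sigma^\filt{R}(x)$ by treating it as two separate identities, each of which follows almost immediately from material recalled above. The first equality is nothing but \ref{annullatore}: since $\filt{R}$ is commutatively filtered, $\grad{R}$ is commutative, $(I,\filt{I})$ is a left ideal of $(R,\filt{R})$, and $(R\mod I, \filt{R}/\filt{I})$ is the quotient module of $(R,\filt{R})$; hence \ref{annullatore} gives $\grad{I} = (0 : \grad{R\mod I})$ directly. So the only thing left to verify is the second equality $\grad{I} = \sum_{x\in I}\grad{R}\,\sigma^\filt{R}(x)$.

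For this I would invoke \ref{simbolo 1} with $M = R$ (viewed as a left $(R,\filt{R})$-module over itself, as noted in the definitions) and $N = I$. That remark asserts that $\sigma^\filt{I}(I)$ is exactly the set of homogeneous elements of the graded left $\grad{R}$-module $\grad{I}$, and that $\grad{I}$ is generated as a left $\grad{R}$-module by $\sigma^\filt{I}(I)$; moreover $\sigma^\filt{R}(I)$ equals $\sigma^\filt{I}(I)$ as a subset of $\grad{R}$ (the image of $\sigma^\filt{I}(n)$ in $\grad{R}$ is $\sigma^\filt{R}(n)$, and here $\grad{I}\subseteq\grad{R}$ by \ref{filtrazione stretta}). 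Combining these, $\grad{I}$ is generated as a left $\grad{R}$-submodule of $\grad{R}$ by $\{\sigma^\filt{R}(x)\mid x\in I\}$, which is precisely the statement that $\grad{I} = \sum_{x\in I}\grad{R}\,\sigma^\filt{R}(x)$. One should note that the right-hand sum lies in $\grad{I}$ because each $\sigma^\filt{R}(x)$ for $x\in I$ is a homogeneous element of $\grad{I}$, and $\grad{I}$ is a left ideal of $\grad{R}$; the reverse inclusion is exactly the generation statement.

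There is essentially no obstacle here: the proposition is a consolidation of \ref{annullatore} and \ref{simbolo 1}, packaged for convenient reference in later sections (where it will be applied to Weyl algebras with the $\omega$-degree filtration). The one point that deserves a word of care is the bookkeeping between $\sigma^\filt{I}$ and $\sigma^\filt{R}$ restricted to $I$, and the identification $\grad{I}\subseteq\grad{R}$ coming from \ref{filtrazione stretta} applied to the inclusion $(I,\filt{I})\mono(R,\filt{R})$ — but this is routine. I would therefore write the proof in two or three sentences, citing \ref{annullatore} for the first equality and \ref{simbolo 1} (together with \ref{filtrazione stretta}) for the second.
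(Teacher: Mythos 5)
Your proof is correct and matches the paper's exactly: the paper's own proof reads simply ``Clear by \ref{annullatore} and \ref{simbolo 1}.'' You have merely unpacked those two citations into their explicit content, with the same division of labour between the annihilator identity and the generation-by-symbols statement.
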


\begin{proof}
Clear by \ref{annullatore} and \ref{simbolo 1}.
\end{proof}

\begin{remark}\label{machedici}
Let $(R,\filt{R})$ be a filtered ring and $(M,\filt{M})$ be a left $(R,\filt{R})$-module.
If $U$ is a system of generators of $M$ other than $M$, then $\grad{M}$ is not generated by 
$\sigma^\filt{M}(U)$, in general.
%It is very easy to find a counter-example.

For instance consider the commutative polynomial ring  $R=\CC[X]$
provided with the filtration $\mathcal{R}$  given by
$\mathrm{F}_i\mathcal{R}=\{r\in R\mid\deg(r)\le i\}$.
Put $(M,\filt{M})=(R,\filt{R})$.
Obviously $\{X,X+1\}$  is a system of generators of $M$.
Further we have $\mathrm{G}\mathcal{R}\cong\CC[X]$ as rings
and $\mathrm{G}\mathcal{M}\cong\CC[X]$ as $\CC[X]$-modules.
In view of these isomorphisms we can write
$\sigma^\mathcal{M}(X+1)=X=\sigma^\mathcal{M}(X)$.
Thus $\mathrm{G}\mathcal{R}\,\sigma^\mathcal{M}(\{X,X+1\})=\CC[X]\, X\subsetneq\CC[X]$.
% 
% Nevertheless, if $\sigma^{\filt{M}}$ is multiplicative,
% that is, $\sigma^\filt{M}(rm)=\sigma^\filt{R}(r)\sigma^\filt{M}(m)$
% for all $r\in R$ and all $m\in M$,
% and if $M$ is cyclic,
% that is, $M=Rm$ for some $m\in M$,
% then $\grad{M}=\grad{R}\,\sigma^\filt{M}(m)$.
\end{remark}

\begin{remark}\label{mavala}
The converse of \ref{machedici} is partially true.
If $\filt{M}$ is discrete and $U\subseteq M$ is such that $\sigma^\filt{M}(U)$
generates $\grad{M}$ over $\grad{R}$, then $U$ generates $M$  over $R$.
\end{remark}

\begin{remark}\label{GG}
Let $(R,\filt{R})$ be a filtered ring.
% As any other graded ring,
We can provide the graded ring $\grad{R}$ with its
\concept{filtration $\filt{GR}$ induced by the grading}
given by
$\Fi_i\filt{GR}=\bigoplus_{j\le i}\Gr_{j}\filt{R}$.
Then we construct the graded ring $\grad{GR}$ associated to the filtered ring
${(\grad{R},\filt{GR})}$.
%thus ${\grad{GR}=\bigoplus_{i\in\ZZ}\Fi_i\filt{GR}\mod\Fi_{i-1}\filt{GR}}$.
Since for each $i$ one has a left module isomorphism
$\Fi_i\filt{R}\cong\Fi_i\filt{GR}$
over the isomorphic rings
$\Fi_0\filt{R}\cong\Fi_0\filt{GR}$,
there exists a graded ring isomorphism
$\grad{R}\cong\grad{GR}$.
% which is given by
% $(r_i+\Fi_{i-1}\filt{R})_{i\in\ZZ}
% \mapsto((r_i+\Fi_{i-1}\filt{R})+\Fi_{i-1}\filt{GR})_{i\in\ZZ}$
% with $r_i\in\Fi_i\filt{R}$ and where $\Gr_i\filt{R}$ is canonically injected
% in $\Fi_i\filt{GR}$.

In a similar manner, if $(M,\filt{M})$ is a left $(R,\filt{R})$-module,
we find an isomorphism 
${\grad{M}\cong\grad{GM}}$
of graded left modules over the isomorphic graded rings
${\grad{R}\cong\grad{GR}}$,
where $\filt{GM}$ is the filtration of $\grad{M}$ given by
$\Fi_i\filt{GM}=\bigoplus_{j\le i}\Gr_j\filt{M}$.
% 
% we can provide
% $\grad{M}$ with the
% \concept{$\filt{GR}$-filtration $\filt{GM}$ induced by the grading}, that is,
% $\Fi_i\filt{GM}=\bigoplus_{j\le i}\Gr_j\filt{M}$,
% and form the associated graded left $\grad{GR}$-module
% $\grad{GM}=\bigoplus_{i\in\ZZ}\Fi_i\filt{GM}\mod\Fi_{i-1}\filt{GM}$,
% and so we see  as above that there exists an isomorphism
% $\grad{M}\rightarrow \grad{GM}$
% of graded left modules over the isomorphic graded rings $\grad{R}\cong\grad{GR}$ given by
% $(m_i+\Fi_{i-1}\filt{M})_{i\in\ZZ}
% \mapsto((m_i+\Fi_{i-1}\filt{M})+\Fi_{i-1}\filt{GM})_{i\in\ZZ}$
% with $m_i\in\Fi_i\filt{M}$ where and $\Gr_i\filt{M}$ is canonically injected
% in $\Fi_i\filt{GM}$.
\end{remark}

% \noindent %xxxx
% We introduce good filtrations and state some well known facts on such filtrations,
% which we do not prove here.

\begin{definition}\label{def buona}
Let $(R,\filt{R})$ be a filtered ring and $M$ be a left $R$-module.
An $\filt{R}$-filtration $\filt{M}$ of $M$ is \concept{good} if
there exist ${s\in\NN_0}$, ${m_1,\ldots,m_s\in M}$, and ${p_1,\ldots,p_s\in\ZZ}$ such that
for all $i\in\ZZ$ it holds $\smash{\Fi_i\filt{M}=\sum_{j=1}^{s}\Fi_{i-p_j}\filt{R}\;m_j}$.
Since $1\in\Fi_0\filt{R}$, we have then $\smash{m_j\in\Fi_{p_j}\filt{M}}$.
\end{definition}

\begin{remark}\label{good => d}
In the notation of \ref{def buona},
any good $\filt{R}\text{-}$filtration $\filt{M}$ of $M$ is discrete
as $\filt{R}$ is discrete by definition.
\end{remark}

\begin{example}\label{buona}
Let $(R,\filt{R})$ be a filtered ring and $M$ be a finitely generated left $R$-module.
For each finite system of generators $m\in M^{\oplus s}$ of $M$ and
each $p\in\ZZ^{\oplus s}$
there exists a \concept{standard} good $\filt{R}$-filtration $\filt{M}$ of $M$ given by
$\Fi_i\filt{M}=\sum_{j=1}^{s}\Fi_{i-p_j}\filt{R}\;m_j$.
% Such a good filtration $\filt{M}$ of $M$ is called \concept{standard}.
% Proposition \ref{equivalente} implies that ''standard $\gdw$ good.´´
\end{example}

%----
\begin{proposition}\label{good => fg}
Let $(R,\filt{R})$ be a filtered ring
and $(M,\filt{M})$ be a left $(R,\filt{R})$-module.
If the $\filt{R}$-filtration $\filt{M}$ is good,
then the left $\grad{R}$-module $\grad{M}$ is finitely generated.
\end{proposition}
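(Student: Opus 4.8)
The plan is to read off an explicit finite generating set of $\grad{M}$ from the data witnessing goodness. By \ref{def buona} there are ${s\in\NN_0}$, elements ${m_1,\ldots,m_s\in M}$ with $m_j\in\Fi_{p_j}\filt{M}$, and integers ${p_1,\ldots,p_s}$ such that $\Fi_i\filt{M}=\sum_{j=1}^{s}\Fi_{i-p_j}\filt{R}\,m_j$ for every ${i\in\ZZ}$. I claim the finitely many homogeneous elements $\sigma_{p_j}^\filt{M}(m_j)\in\Gr_{p_j}\filt{M}$, ${1\le j\le s}$ (some of which may be $0$), generate $\grad{M}$ over $\grad{R}$.

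Since $\grad{M}=\bigoplus_{i\in\ZZ}\Gr_i\filt{M}$ is the direct sum of its homogeneous components, it suffices to show that each $\Gr_i\filt{M}=\Fi_i\filt{M}\mod\Fi_{i-1}\filt{M}$ is contained in $\sum_{j=1}^{s}\grad{R}\,\sigma_{p_j}^\filt{M}(m_j)$. So I would fix $i$ and a homogeneous $\xi\in\Gr_i\filt{M}$, lift it to some $x\in\Fi_i\filt{M}$, and use goodness to write $x=\sum_{j=1}^{s}r_jm_j$ with $r_j\in\Fi_{i-p_j}\filt{R}$. Passing to symbols, set $\rho_j=r_j+\Fi_{i-p_j-1}\filt{R}\in\Gr_{i-p_j}\filt{R}$. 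By the definition of the $\grad{R}$-action in \ref{associato}, the product $\rho_j\,\sigma_{p_j}^\filt{M}(m_j)$ is homogeneous of degree $i$ and equals $r_jm_j+\Fi_{i-1}\filt{M}$ in $\Gr_i\filt{M}$. Summing over $j$ gives $\sum_{j=1}^{s}\rho_j\,\sigma_{p_j}^\filt{M}(m_j)=x+\Fi_{i-1}\filt{M}=\xi$, which proves the claim, and hence that $\grad{M}$ is finitely generated.

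I do not expect a genuine obstacle: the argument is a direct unwinding of definitions in the spirit of the proof of \ref{simbolo 2}. The only points that need a little care are that it is legitimate to argue homogeneous component by homogeneous component (because $\grad{M}$ is by construction the direct sum of the $\Gr_i\filt{M}$), and the degree bookkeeping in the product formula, namely that multiplying the degree-$(i-p_j)$ class $\rho_j$ of $r_j$ by the degree-$p_j$ class $\sigma_{p_j}^\filt{M}(m_j)$ of $m_j$ lands in degree $i$ and is computed on representatives as $r_jm_j$ modulo $\Fi_{i-1}\filt{M}$. Note that discreteness of $\filt{M}$ (\ref{good => d}) is not needed for this implication.
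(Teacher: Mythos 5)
Your proof is correct, and it is the standard argument: the symbols $\sigma_{p_j}^{\filt{M}}(m_j)$ of the elements witnessing goodness generate $\grad{M}$, verified degree by degree using the defining identity $\Fi_i\filt{M}=\sum_j \Fi_{i-p_j}\filt{R}\,m_j$ and the component-wise multiplication from \ref{associato}. The paper itself only cites \cite[Lemma~I.5.4(2)]{H-O} for this, and that reference proves it exactly as you do, so your proposal simply makes the cited proof explicit; the bookkeeping you flag (homogeneous-component reduction and the degree of $\rho_j\,\sigma_{p_j}^{\filt{M}}(m_j)$) is handled correctly, and you are right that discreteness plays no role here.
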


\begin{proof}
See \cite[Lemma~I.5.4(2)]{H-O}.
\end{proof}

\begin{definition}
Let $(R,\filt{R})$ be a filtered ring,
$(M,\filt{M})$ be a left $(R,\filt{R})$-module,
and $(m_k)_{k\in\NN}$ be a sequence of elements $m_k$ of~$M$.

Then $(m_k)_{k\in\NN}$ is said to be an \concept{$\filt{M}$-Cauchy sequence}
if for each $j\in\ZZ$ there exists $n_j\in\NN$
such that for all $k,l\ge n_j$ it holds $m_k-m_l\in\Fi_{j}\filt{M}$.

And $(m_k)_{k\in\NN}$ is said to be \concept{$\filt{M}$-convergent} to $m\in M$ if
for each $j\in\ZZ$ there exists $n_j\in\NN$
such that for all $k\ge n_j$ it holds $m_k-m\in\Fi_{j}\filt{M}$.

If every $\filt{M}$-Cauchy sequence of elements of $M$ is $\filt{M}$-convergent,
then $\filt{M}$ is said to be \concept{complete}.

If $\bigcap_{j\in\ZZ}\Fi_j\filt{M}=\{0\}$,
then $\filt{M}$ is called \concept{separated} or \concept{Hausdorff}.
\end{definition}

\begin{remark}\label{d => c}
Discrete filtrations are complete and, trivially, separated.
So are, in particular, our ring filtrations and any good module filtrations.
\end{remark}

\begin{proposition}\label{dfg => good}
Let $(R,\filt{R})$ be a filtered ring
and $(M,\filt{M})$ be a left $(R,\filt{R})$-module.
If the $\mathcal{R}$-filtration $\mathcal{M}$ is separated and the left $\grad{R}$-module
$\grad{M}$ is finitely generated, then $\mathcal{M}$ is good.
\end{proposition}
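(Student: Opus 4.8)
The plan is to recover from a set of homogeneous generators of $\grad{M}$ an explicit presentation of $\filt{M}$ of the shape demanded by \ref{def buona}, so that the statement becomes a converse of \ref{good => fg}. Choose finitely many homogeneous elements generating the graded $\grad{R}$-module $\grad{M}$ and discard the zero ones. By \ref{simbolo 1} the image $\sigma^{\filt{M}}(M)$ consists of exactly the homogeneous elements of $\grad{M}$, so the remaining generators can be written $\sigma^{\filt{M}}(m_1),\dots,\sigma^{\filt{M}}(m_s)$ with $m_j\in M\setminus\{0\}$; writing $p_j=\deg^{\filt{M}}(m_j)$ we have $m_j\in\Fi_{p_j}\filt{M}$ while $\sigma^{\filt{M}}(m_j)$ is homogeneous of degree $p_j$. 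Put $\Fi_i\filt{M}'=\sum_{j=1}^{s}\Fi_{i-p_j}\filt{R}\,m_j$ for $i\in\ZZ$. Since $\Fi_k\filt{R}=0$ for $k<0$, we have $\Fi_i\filt{M}'=0$ whenever $i<\min_j p_j$; and axiom~(c) of an $\filt{R}$-filtration together with $m_j\in\Fi_{p_j}\filt{M}$ gives $\Fi_i\filt{M}'\subseteq\Fi_i\filt{M}$ for every $i$. Everything then reduces to the reverse inclusion $\Fi_i\filt{M}\subseteq\Fi_i\filt{M}'$, for then $\filt{M}=(\Fi_i\filt{M}')_{i\in\ZZ}$ has the shape of \ref{def buona} and is good.

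To prove the reverse inclusion I would approximate by symbols. Let $m\in\Fi_i\filt{M}$, put $y_0=m$ and $d=\deg^{\filt{M}}(m)\le i$, and suppose a nonzero $y_{l-1}$ with $\deg^{\filt{M}}(y_{l-1})=d_{l-1}\le d-l+1$ has been constructed. Its symbol $\sigma^{\filt{M}}(y_{l-1})$ is a homogeneous element of $\grad{M}$ of degree $d_{l-1}$, hence equals $\sum_j\overline{r_j}\,\sigma^{\filt{M}}(m_j)$ with $\overline{r_j}\in\grad{R}$ homogeneous of degree $d_{l-1}-p_j$ (and $\overline{r_j}=0$ when that number is negative). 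Lift each $\overline{r_j}$ through the epimorphism $\sigma_{d_{l-1}-p_j}^{\filt{R}}$ of \ref{mappa simbolica} to $r_j\in\Fi_{d_{l-1}-p_j}\filt{R}$ and put $m'_l=\sum_j r_j m_j\in\Fi_{d_{l-1}}\filt{M}'$. By the very definition of the $\grad{R}$-action on $\grad{M}$ the elements $m'_l$ and $y_{l-1}$ have the same image in $\Gr_{d_{l-1}}\filt{M}$, so $y_l:=y_{l-1}-m'_l\in\Fi_{d_{l-1}-1}\filt{M}$ and $\deg^{\filt{M}}(y_l)\le d_{l-1}-1\le d-l$; this carries the induction forward.

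The crux is that this process must halt. If the step at stage $l$ is carried out then $\sigma^{\filt{M}}(y_{l-1})\ne 0$, so $m'_l$ has nonzero image in $\Gr_{d_{l-1}}\filt{M}$, hence $m'_l\ne 0$, hence $\Fi_{d_{l-1}}\filt{M}'\ne 0$; combined with the vanishing noted above this forces $d_{l-1}\ge\min_j p_j$, and together with $d_{l-1}\le d-l+1$ this bounds $l$. Thus $y_L=0$ for some $L$, and telescoping gives $m=\sum_{l=1}^{L}m'_l$, a finite sum; since each $m'_l\in\Fi_{d_{l-1}}\filt{M}'\subseteq\Fi_d\filt{M}'$ and $d\le i$, we conclude $m\in\Fi_i\filt{M}'$, completing the reverse inclusion. (One may instead skip the counting argument and use completeness: the partial sums $\sum_{l\le k}m'_l$ form an $\filt{M}'$-Cauchy sequence, $\filt{M}'$ is complete by \ref{d => c}, and since $m-\sum_{l\le k}m'_l=y_k$ becomes arbitrarily deep in $\filt{M}$ the limit must equal $m$ because $\filt{M}$ is separated; this is the only place where the separatedness hypothesis enters.) The single delicate point is this halting/convergence step; the rest is routine bookkeeping with symbols.
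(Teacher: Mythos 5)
Your proof is correct, but the paper does not actually prove this proposition: it merely remarks that $\filt{R}$ is discrete, hence complete, and discharges the statement to the citation \cite[Theorem~I.5.7]{H-O}. What you supply is a self-contained reconstruction of the argument that citation encodes, in the natural form of a converse to~\ref{good => fg}: lift finitely many homogeneous generators $\sigma^{\filt{M}}(m_1),\dots,\sigma^{\filt{M}}(m_s)$ of $\grad{M}$, form the candidate filtration $\Fi_i\filt{M}'=\sum_j\Fi_{i-p_j}\filt{R}\,m_j$ of the shape~\ref{def buona}, observe it is contained in $\filt{M}$, and establish the reverse containment by iteratively subtracting off symbol representatives. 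The descent terminates because $\filt{M}'$ is discrete (property~(d) of a ring filtration forces $\Fi_i\filt{M}'=0$ for $i<\min_j p_j$), and the telescoped finite sum gives $m\in\Fi_i\filt{M}'$. This is worth having on record, since the paper offers the reader only a pointer to the literature. One small correction: you claim separatedness enters ``only'' in the Cauchy-sequence variant, but it already powers the counting argument --- the inference ``$y_{l-1}\ne 0$ implies $\sigma^{\filt{M}}(y_{l-1})\ne 0$'' holds precisely because separatedness forces every nonzero element of $M$ to have finite $\filt{M}$-degree; without it a nonzero $y_{l-1}$ lying in $\bigcap_j\Fi_j\filt{M}$ would have vanishing symbol and the descent would stall without detecting it. And because $\filt{M}'$ is discrete, your parenthetical completeness alternative collapses to the counting argument anyway: the partial sums are eventually constant, so the two variants are the same proof in different dress.
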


\begin{proof}
As $\filt{R}$ is discrete and thus complete, we can appeal to \cite[Theorem~I.5.7]{H-O}.
\end{proof}

\begin{corollary}\label{subfilt}
Let $(R,\filt{R})$ be a filtered ring,
$(M,\filt{M})$ be a left $(R,\filt{R})$-module,
and $(N,\filt{N})$ be a submodule of $(M,\filt{M})$, so that by definition
$\mathcal{N}$ is the $\mathcal{R}$-filtration of $N$ induced by $\mathcal{M}$.
If the ring $\grad{R}$ is left noetherian and the $\filt{R}$-filtration $\filt{M}$ is good,
then $\mathcal{N}$ is good, too.
\end{corollary}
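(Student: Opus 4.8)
The plan is to play the two characterizations of goodness already at hand---Proposition~\ref{good => fg} and Proposition~\ref{dfg => good}---against each other, using the left noetherianity of $\grad{R}$ as the bridge between them. All the ingredients are in the excerpt; the argument is just an assembly.

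First, since $\filt{M}$ is good, Proposition~\ref{good => fg} gives that $\grad{M}$ is a finitely generated left $\grad{R}$-module. By Definition~\ref{filtrazione indotta} we have the strict exact sequence $(N,\filt{N})\mono(M,\filt{M})\epi(M/N,\filt{M}/\filt{N})$, so Remark~\ref{filtrazione stretta} yields an exact sequence $\grad{N}\mono\grad{M}\epi\grad{M/N}$ of graded left $\grad{R}$-modules; in particular the canonical map $\grad{N}\to\grad{M}$ is injective, and we may regard $\grad{N}$ as a graded left $\grad{R}$-submodule of $\grad{M}$.

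Next I would invoke the hypothesis that $\grad{R}$ is left noetherian: over a left noetherian ring every finitely generated left module is noetherian, hence every submodule of it is finitely generated. Applying this to the submodule $\grad{N}\subseteq\grad{M}$ shows that $\grad{N}$ is a finitely generated left $\grad{R}$-module. On the other hand, $\filt{M}$ is good, hence discrete by Remark~\ref{good => d}, so $\Fi_i\filt{N}=\Fi_i\filt{M}\cap N=0$ for $i\ll0$; thus the induced filtration $\filt{N}$ is discrete as well, and in particular separated by Remark~\ref{d => c}.

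Finally, with $\filt{N}$ separated and $\grad{N}$ finitely generated over $\grad{R}$, Proposition~\ref{dfg => good} applies and gives that $\filt{N}$ is good, which is the assertion. I do not expect a genuine obstacle here; the only point that needs a little care is the identification of $\grad{N}$ with an honest graded $\grad{R}$-submodule of $\grad{M}$, so that the left noetherian hypothesis can be brought to bear, and this is precisely what the strict exactness statement in Remark~\ref{filtrazione stretta} provides.
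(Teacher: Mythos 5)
Your proof is correct and matches the paper's argument essentially step for step: goodness of $\filt{M}$ gives $\grad{M}$ finitely generated (hence noetherian over the left noetherian $\grad{R}$), so the submodule $\grad{N}$ is finitely generated; discreteness of $\filt{M}$ passes to $\filt{N}$, giving separatedness, and Proposition~\ref{dfg => good} closes the argument. The only cosmetic difference is that you spell out the strict-exactness justification for viewing $\grad{N}\subseteq\grad{M}$, which the paper leaves implicit.
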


\begin{proof}
By \ref{good => fg}, $\grad{M}$ is left noetherian, and so is $\grad{N}$.
By \ref{good => d}, $\mathcal{M}$ is discrete,
and so is $\mathcal{N}$.
We conclude by \ref{d => c} and~\ref{dfg => good}.
\end{proof}

\begin{remark}\label{quotfilt}
Let $(R,\filt{R})$ be a filtered ring
and $(M,\filt{M})$ be a left $(R,\filt{M})$-module.
Let $N$ be a left $R$-submodule of $M$.
If the $\filt{R}$-filtration~$\filt{M}$ is good
then the induced $\filt{R}$-filtration $\filt{M \mod N}$ of $M\mod N$ is  good.
Indeed, in the notation of \ref{def buona},
one immediately sees that %for all $i\in\ZZ$ it holds
$\Fi_i\filt{M\mod N}=\sum_{j=1}^{s}\Fi_{i-p_j}\filt{R}\;(m_j+N)$.
\end{remark}
%----

%  % \begin{remark}\label{sommario}
% \begin{theorem}\label{sommario}
%  % Summarizing, if
% Let $(R,\filt{R})$ be a filtered ring.
% If we assume that %the associated graded ring
% $\,\grad{R}$ is left noetherian,
% then  good $\filt{R}$-filtrations of a left $R$-module induce good filtrations
% on submodules and on quotient modules. %,
%  % see \ref{subfilt} and \ref{quotfilt}.
%  % %
% Any finitely generated left $R$-module does admit good $\filt{R}$-filtrations,
% see \ref{buona},
% and the graded left $\grad{R}$-modules associated to good $\filt{R}$-filtrations
% are finitely generated,
% and hence left noetherian if $\,\grad{R}$ is left noetherian. %,
%  % see \ref{good => fg}.
% \end{theorem}
%  % \end{remark}

% \noindent %xxxx
% Let us introduce the important notion of equivalent filtrations and see why they are important.

\begin{definition}
Let $(R,\filt{R})$ be a filtered ring and $M$ be a left $R$-module.
Two $\filt{R}$-filtrations $\filt{M}'$ and $\filt{M}''$ of $M$ are
\concept{equivalent} or \concept{of bounded difference} if 
there exists $r\in\NN$, or equivalently $r\in\ZZ$, such that
$\Fi_{i-r}\filt{M}''\subseteq\Fi_i\filt{M}'\subseteq\Fi_{i+r}\filt{M}''$
for all $i\in\ZZ$.
This defines indeed an equivalence relation among the $\filt{R}$-filtrations of $M$.
\end{definition}

\begin{proposition}\label{equivalente}
Let $(R,\filt{R})$ be a filtered ring and 
$(M,\filt{M}')$ and $(M,\filt{M}'')$ be left $(R,\filt{R})$-modules.
If the $\filt{R}$-filtrations $\filt{M}'$ and $\filt{M}''$ are good,
they are equivalent.
\end{proposition}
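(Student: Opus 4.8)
The plan is to use the explicit shape of a good filtration recorded in \ref{def buona} and to compare each of the two filtrations to the other on a finite set of generators. Write $\Fi_i\filt{M}'=\sum_{j=1}^{s}\Fi_{i-p_j}\filt{R}\,m_j$ and $\Fi_i\filt{M}''=\sum_{k=1}^{t}\Fi_{i-q_k}\filt{R}\,m_k''$ for suitable $m_j\in M$, $m_k''\in M$ and $p_j,q_k\in\ZZ$, with $m_j\in\Fi_{p_j}\filt{M}'$ and $m_k''\in\Fi_{q_k}\filt{M}''$.

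First I would locate the finitely many generators $m_j$ of $\filt{M}'$ inside the filtration $\filt{M}''$: since $M=\bigcup_{i\in\ZZ}\Fi_i\filt{M}''$ and each $m_j$ is a single element, there is $a_j\in\ZZ$ with $m_j\in\Fi_{a_j}\filt{M}''$. Set $r_1=\max_{1\le j\le s}(a_j-p_j)$. Then, for every $i\in\ZZ$ and every $j$, property (c) of an $\filt{R}$-filtration together with the monotonicity of $\filt{M}''$ gives $\Fi_{i-p_j}\filt{R}\,m_j\subseteq\Fi_{(i-p_j)+a_j}\filt{M}''\subseteq\Fi_{i+r_1}\filt{M}''$; summing over $j=1,\ldots,s$ yields $\Fi_i\filt{M}'\subseteq\Fi_{i+r_1}\filt{M}''$ for all $i\in\ZZ$.

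Next I would run the symmetric argument with the roles of $\filt{M}'$ and $\filt{M}''$ exchanged, obtaining $r_2\in\ZZ$ with $\Fi_i\filt{M}''\subseteq\Fi_{i+r_2}\filt{M}'$ for all $i$, equivalently $\Fi_{i-r_2}\filt{M}''\subseteq\Fi_i\filt{M}'$. Finally, putting $r=\max\{r_1,r_2,0\}$ (or any larger natural number) and using once more that both filtrations are increasing, one gets $\Fi_{i-r}\filt{M}''\subseteq\Fi_i\filt{M}'\subseteq\Fi_{i+r}\filt{M}''$ for all $i\in\ZZ$, which is precisely the definition of equivalence of $\filt{M}'$ and $\filt{M}''$. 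I do not expect any genuine obstacle: the proof is pure index bookkeeping, the only minor point being the harmless passage to a single non-negative $r$ that serves both inclusions, which is legitimate since the definition of bounded difference allows $r$ to be taken in $\NN$.
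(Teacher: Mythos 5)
Your argument is correct: locating the finitely many generators of each good filtration inside the other and taking a common bound on the index shifts is exactly the standard proof. The paper itself does not spell this out but simply cites \cite[Lemma~I.5.3]{H-O}, and the argument you give is the one found there, so there is nothing to compare beyond noting that you have correctly reconstructed the reference's proof (including the harmless normalization to a single non-negative $r$).
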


\begin{proof}
See \cite[Lemma I.5.3]{H-O}.
\end{proof}

\begin{theorem}\label{radicale}
Let $(R,\filt{R})$ be a filtered ring such that 
the ring filtration
$\filt{R}$ 
%of $R$
is commutative.
Let $(M,\filt{M}')$ and $(M,\filt{M}'')$ be left $(R,\filt{R})$-modules such that
the $\filt{R}$-filtrations
$\filt{M}'$ and $\filt{M}''$
% of $M$
are equivalent.
Then $\rad{(0 : \grad{M}')}=\rad{(0 : \grad{M}'')}$.
\end{theorem}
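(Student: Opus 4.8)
\noindent
The plan is to prove the two inclusions $\rad{(0:\grad{M}'')}\subseteq\rad{(0:\grad{M}')}$ and its mirror image; since the definition of equivalent filtrations is symmetric in $\filt{M}'$ and $\filt{M}''$ (the same integer $r$ serves both ways), it suffices to establish the first one. First I would record the preliminaries. By \ref{annullatore} the ring $\grad{R}$ is commutative, so the radical of an ideal of $\grad{R}$ is again an ideal, and it is monotone and idempotent. Next, $\grad{M}'$ and $\grad{M}''$ are \emph{graded} $\grad{R}$-modules, and the annihilator of a graded module over a graded ring is a graded ideal: writing $a=\sum_k a_k$ with $a_k$ homogeneous of degree $k$ and testing against an arbitrary homogeneous element $\overline{m}$, the components $a_k\,\overline{m}$ live in distinct degrees, so $a\,\overline{m}=0$ forces $a_k\,\overline{m}=0$ for every $k$. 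Hence $(0:\grad{M}'')$ is generated by its homogeneous elements, and the whole theorem will follow once I produce a single $N\in\NN$, \emph{independent of $a$}, such that $a^N\in(0:\grad{M}')$ for every homogeneous $a\in(0:\grad{M}'')$: indeed then every homogeneous element of $(0:\grad{M}'')$ lies in the ideal $\rad{(0:\grad{M}')}$, hence so does $(0:\grad{M}'')$, hence $\rad{(0:\grad{M}'')}\subseteq\rad{(0:\grad{M}')}$.

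So fix a homogeneous $a\in\Gr_k\filt{R}$ — note $k\ge 0$ because $\filt{R}$ is discrete, so $\Fi_i\filt{R}=0$ for $i<0$ — annihilating $\grad{M}''$, and fix once and for all a representative $\rho\in\Fi_k\filt{R}$ of $a$. The key step is to lift the annihilation to the level of representatives. By the definition of the $\grad{R}$-action (\ref{associato}), for $m\in\Fi_j\filt{M}''$ one has $a\cdot(m+\Fi_{j-1}\filt{M}'')=\rho m+\Fi_{j+k-1}\filt{M}''$; since this vanishes, $\rho m\in\Fi_{j+k-1}\filt{M}''$. In other words $\rho\,\Fi_j\filt{M}''\subseteq\Fi_{j+k-1}\filt{M}''$ for every $j\in\ZZ$, and iterating this $p$ times yields $\rho^p\,\Fi_j\filt{M}''\subseteq\Fi_{j+p(k-1)}\filt{M}''$ for all $p\ge 0$.

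Now take any homogeneous $\overline{m}=m+\Fi_{i-1}\filt{M}'$ of $\grad{M}'$ with $m\in\Fi_i\filt{M}'$. Equivalence of the filtrations gives $\Fi_i\filt{M}'\subseteq\Fi_{i+r}\filt{M}''$, so $m\in\Fi_{i+r}\filt{M}''$; the iterated lift with $j=i+r$ then gives $\rho^N m\in\Fi_{i+r+N(k-1)}\filt{M}''$; and the other half of equivalence, $\Fi_\ell\filt{M}''\subseteq\Fi_{\ell+r}\filt{M}'$, gives $\rho^N m\in\Fi_{i+2r+N(k-1)}\filt{M}'$. On the other hand, unwinding the $\grad{R}$-action on $\grad{M}'$ again, $a^N\overline{m}=\rho^N m+\Fi_{i+Nk-1}\filt{M}'$, which is $0$ in $\grad{M}'$ as soon as $\rho^N m\in\Fi_{i+Nk-1}\filt{M}'$. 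Comparing the two indices, $i+2r+N(k-1)\le i+Nk-1$ holds precisely when $N\ge 2r+1$. Thus $N:=2r+1$ — which depends only on $r$, not on $a$ or on its degree $k$ — does the job: $a^N$ kills every homogeneous element of $\grad{M}'$, hence $a^N\in(0:\grad{M}')$, and the proof is complete.

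The argument is elementary index-chasing, so I do not expect a genuine obstacle. The one point that requires a little foresight, and where a careless attempt would get stuck, is to notice that the exponent $N$ must be chosen \emph{uniformly in the degree $k$ of $a$}, so that it applies to all homogeneous generators of the graded ideal $(0:\grad{M}'')$ at once and lets one pass to radicals of the entire ideals; the small miracle is that the $N(k-1)$ contributions cancel against the $Nk$ on the other side, leaving the $k$-free condition $N\ge 2r+1$.
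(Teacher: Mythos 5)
Your proof is correct, and it is self-contained where the paper is not: the paper's proof of \ref{radicale} consists of a single citation to \cite[Lemma III.4.1.9]{H-O}, with the remark that the lemma there, though stated for good filtrations, is actually proved for filtrations of bounded difference. So there is no argument in the paper to compare against; what you have written is precisely the kind of elementary index-chase that the reference encapsulates. The steps all check out: $(0:\grad{M}'')$ is a graded ideal of the commutative ring $\grad{R}$; a homogeneous annihilator $a$ of degree $k$ with representative $\rho\in\Fi_k\filt{R}$ yields the degree-lowering inclusion $\rho\,\Fi_j\filt{M}''\subseteq\Fi_{j+k-1}\filt{M}''$; iterating $N$ times and transporting across the bounded-difference inclusions gives $\rho^N m\in\Fi_{i+2r+N(k-1)}\filt{M}'$; and the requirement $i+2r+N(k-1)\le i+Nk-1$ reduces to $N\ge 2r+1$, independently of~$k$.

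One small correction to your closing remark, though. Uniformity of $N$ in $a$ is not actually needed to pass to radicals. For each homogeneous $a\in(0:\grad{M}'')$, any exponent $N(a)$ with $a^{N(a)}\in(0:\grad{M}')$ already puts $a\in\rad{(0:\grad{M}')}$; since $(0:\grad{M}'')$ is generated as an ideal by its homogeneous elements and $\rad{(0:\grad{M}')}$ is an ideal, this gives $(0:\grad{M}'')\subseteq\rad{(0:\grad{M}')}$ and hence $\rad{(0:\grad{M}'')}\subseteq\rad{(0:\grad{M}')}$ directly. So a reader who let $N$ depend on $a$ would not in fact get stuck; the uniform bound $N=2r+1$ is a pleasant by-product of the cancellation you noticed, not a logical necessity.
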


\begin{proof}
In \cite[Lemma III.4.1.9]{H-O} the claim is stated for good filtrations,
but the authors actually prove it for the more general case of equivalent filtrations.
\end{proof}

\begin{proposition}\label{la2}\label{1.21}
Let $(R,\filt{R})$ be a filtered commutative ring,
$M$ be an $R$-module and  $N$ be an $R$-submodule of $M$.
Providing the annihilators $(0 : M)$, $(0:N)$, $(0:M\mod N)$ in $R$ with the 
respective induced
$\mathcal{R}$-filtrations,
denoted $(0:\mathcal{M})$, $(0:\mathcal{N})$, $(0:\filt{M\mod N})$,
it holds
$\rad{\Gr(0:\filt{M})}=\rad{\Gr(0:\filt{N})}\cap\rad{\Gr(0:\filt{M\mod N})}$
in~$\grad{R}$.
\end{proposition}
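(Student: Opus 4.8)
The plan is to reduce the statement, via the exactness of $\Gr$ on strict exact sequences (Remark~\ref{filtrazione stretta}), to an equality of radical annihilator ideals for a short exact sequence built from the annihilators, and then to invoke a standard commutative-algebra fact: for an exact sequence $N \mono M \epi M/N$ of modules over a commutative ring, $\sqrt{(0:M)} = \sqrt{(0:N)} \cap \sqrt{(0:M/N)}$. The subtlety is that the filtrations on the three annihilator ideals are the \emph{induced} filtrations from $\filt{R}$, and passing to $\Gr$ does not interact straightforwardly with the operation $(0:{-})$; so I cannot simply apply the commutative-algebra fact directly inside $\grad{R}$.

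First I would record the elementary inclusions among the ungraded annihilators: $(0:M) = (0:N) \cap (0:M/N) \subseteq (0:N)$ is false in general — rather $(0:M) \subseteq (0:N)$ and $(0:M) \subseteq (0:M/N)$, while $(0:N)\cap(0:M/N)$ need only satisfy $\bigl((0:N)\cap(0:M/N)\bigr)^2 \subseteq (0:M)$. This last containment is the algebraic heart: if $a$ kills $N$ and $b$ kills $M/N$, then for $m\in M$ we have $bm\in N$, hence $abm = 0$, so $ab\in(0:M)$; thus the product ideal $(0:N)\cdot(0:M/N)$, and a fortiori $\bigl((0:N)\cap(0:M/N)\bigr)^2$, lies in $(0:M)$. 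Combined with the two obvious inclusions, this yields
\begin{equation*}
(0:M) \;\subseteq\; (0:N)\cap(0:M/N),
\qquad
\bigl((0:N)\cap(0:M/N)\bigr)^{2} \;\subseteq\; (0:M),
\end{equation*}
so the three ideals $(0:M)$, $(0:N)\cap(0:M/N)$ have the same radical in $R$, and in fact are "sandwiched" in a way that survives the passage to associated graded objects, as follows.

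The key step is to transport these inclusions to $\grad{R}$. For an inclusion $I \subseteq J$ of ideals of $R$, if both carry the filtration induced from $\filt{R}$ then $\Gr I \subseteq \Gr J$ inside $\grad{R}$ (this is the content of Remark~\ref{filtrazione stretta}/\ref{monotonia stretta}, since $\Fi_i\filt{I} = \Fi_i\filt{R}\cap I \subseteq \Fi_i\filt{R}\cap J = \Fi_i\filt{J}$, and symbols of elements of $I$ land in $\Gr J$). Applying this to $(0:M)\subseteq(0:N)$ and $(0:M)\subseteq(0:M/N)$ gives $\Gr(0:\filt{M}) \subseteq \Gr(0:\filt{N}) \cap \Gr(0:\filt{M\mod N})$, hence $\sqrt{\Gr(0:\filt{M})} \subseteq \sqrt{\Gr(0:\filt{N})}\cap\sqrt{\Gr(0:\filt{M\mod N})}$. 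For the reverse inclusion I would argue at the level of symbols: a homogeneous element of $\Gr(0:\filt{N})\cap\Gr(0:\filt{M\mod N})$ is $\sigma^{\filt{R}}(a) = \sigma^{\filt{R}}(b)$ for suitable $a\in(0:N)$ and $b\in(0:M/N)$ of the same $\filt{R}$-degree $d$; then $ab\in(0:M)$, and since $\filt{R}$ is commutative one checks $\deg^{\filt{R}}(ab) = \deg^{\filt{R}}(a)+\deg^{\filt{R}}(b) = 2d$ fails possibly but $\deg^{\filt{R}}(ab)\le 2d$ with $\sigma^{\filt{R}}(a)\sigma^{\filt{R}}(b) = \sigma^{\filt{R}}(ab)$ when equality holds — the point being that $\bigl(\sigma^{\filt{R}}(a)\bigr)^2 = \sigma^{\filt{R}}(a)\sigma^{\filt{R}}(b) = \sigma^{\filt{R}}(ab) \in \Gr(0:\filt{M})$ in the good case, or more robustly that any homogeneous element $\xi$ of the intersection satisfies $\xi^2\in\Gr(0:\filt{M})$, so $\xi\in\sqrt{\Gr(0:\filt{M})}$; since the intersection is a homogeneous (hence radical-respecting) ideal generated by its homogeneous elements, taking radicals gives $\sqrt{\Gr(0:\filt{N})}\cap\sqrt{\Gr(0:\filt{M\mod N})} \subseteq \sqrt{\Gr(0:\filt{M})}$.

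The main obstacle I anticipate is exactly the degree bookkeeping in the last step: ensuring that $\xi^2$ lands in $\Gr(0:\filt{M})$ when $\xi$ is an arbitrary homogeneous element of the intersection rather than a literal symbol $\sigma^{\filt{R}}(a)$ with $a\in(0:N)$. Since $\Gr(0:\filt{N})$ is generated by $\{\sigma^{\filt{R}}(a) : a\in(0:N)\}$ over $\grad{R}$ by Remark~\ref{simbolo 1}, a homogeneous element of it is a finite $\grad{R}$-combination of such symbols, and I would need a short argument — using commutativity of $\filt{R}$ and the multiplicativity criterion of Definition~\ref{mappa simbolica} — that the product of a homogeneous element of $\Gr(0:\filt{N})$ with one of $\Gr(0:\filt{M\mod N})$ always lies in $\Gr(0:\filt{M})$, i.e. that $\Gr(0:\filt{N})\cdot\Gr(0:\filt{M\mod N}) \subseteq \Gr(0:\filt{M})$. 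Granting that product-inclusion, the squaring trick closes the argument cleanly and no further input is needed beyond the generalities of Section~\ref{sec1}.
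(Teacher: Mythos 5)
Your approach matches the paper's, and the argument is sound once you close the step you yourself flag as the ``main obstacle.'' But that obstacle is not real, and the observation needed to dispose of it is one you already half-made. By definition of the induced filtration, a homogeneous element $\overline{x}$ of degree $i$ in $\Gr(0:\filt{N})\subseteq\grad{R}$ is the residue class modulo $\Fi_{i-1}\filt{R}$ of some $u\in\Fi_i\filt{R}\cap(0:N)=\Fi_i(0:\filt{N})$; it is not merely a $\grad{R}$-combination of symbols but literally a symbol, as Remark~\ref{simbolo 1} records. If moreover $\overline{x}\in\Gr(0:\filt{M\mod N})$, then likewise $\overline{x}=v+\Fi_{i-1}\filt{R}$ for some $v\in\Fi_i\filt{R}\cap(0:M\mod N)$. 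Now $vM\subseteq N$ and $uN=0$ give $uv\in(0:M)$, and since $u,v\in\Fi_i\filt{R}$ one has $uv\in\Fi_{2i}\filt{R}\cap(0:M)=\Fi_{2i}(0:\filt{M})$. The square $\overline{x}^2$ is, by the definition of multiplication in $\grad{R}$, the class $uv+\Fi_{2i-1}\filt{R}$, which therefore lies in $\Gr_{2i}(0:\filt{M})$.

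The point you were circling around is that no degree bookkeeping is needed at all: you do not need $\deg^\filt{R}(uv)=2i$, nor any multiplicativity of $\sigma^\filt{R}$, because regardless of whether $uv$ drops in degree, the class $uv+\Fi_{2i-1}\filt{R}$ is the (possibly zero) image in $\Gr_{2i}\filt{R}$ of an element of $\Fi_{2i}(0:\filt{M})$, hence lies in $\Gr(0:\filt{M})$. This gives $\overline{x}\in\sqrt{\Gr(0:\filt{M})}$ for every homogeneous element $\overline{x}$ of the intersection, and since the intersection is a homogeneous ideal the inclusion $\Gr(0:\filt{N})\cap\Gr(0:\filt{M\mod N})\subseteq\sqrt{\Gr(0:\filt{M})}$ follows, whence the nontrivial containment after passing to radicals. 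The stronger product-inclusion $\Gr(0:\filt{N})\cdot\Gr(0:\filt{M\mod N})\subseteq\Gr(0:\filt{M})$ that you contemplate invoking is also true by the same computation, but it is not needed. Your easy direction (via $\Gr$ applied to $(0:M)\subseteq(0:N)$ and $(0:M)\subseteq(0:M\mod N)$) and the final step of taking radicals agree with the paper.
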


\begin{proof}
Let $\overline{x}\in\Gr(0:\mathcal{N}) \cap \Gr(0:\mathcal{M}\mod \mathcal{N})$
be a homogeneous element of degree $i\in\ZZ$.
We find ${u\in\mathrm{F}_i(0:\mathcal{N})=\mathrm{F}_i\mathcal{R}\cap (0:N)}$
and ${v\in\mathrm{F}_i(0:\mathcal{M}\mod \mathcal{N})=\mathrm{F}_i\mathcal{R}\cap (0:M\mod N)}$
with $u+\mathrm{F}_{i-1}\mathcal{R}=\overline{x}=v+\mathrm{F}_{i-1}\mathcal{R}$.
Because $v\in(0:M\mod N)$, it holds $vM\subseteq N$.
Since $u\in(0:N)$, it follows $uvM=0$.
Hence $uv\in(0:M)$.
Since ${u\in\mathrm{F}_i\mathcal{R}}$ and ${v\in\mathrm{F}_i\mathcal{R}}$,
it follows $uv\in\mathrm{F}_{2i}\mathcal{R}\cap(0:M)=\mathrm{F}_{2i}(0:\mathcal{M})$.
%We show that $\overline{x}^2=uv+\mathrm{F}_{2i-1}\mathcal{R}$.
%As ${\overline{x}^2=u^2+\mathrm{F}_{2i-1}\mathcal{R}}$, it is enough to check that
%$u^2-uv\in\mathrm{F}_{2i-1}\mathcal{R}$.
%Indeed, as $u-v\in\mathrm{F}_{i-1}\mathcal{R}$, and recalling that 
%$u\in\mathrm{F}_{i}\mathcal{R}$,
%we have $u^2-uv=u(u-v)\in\mathrm{F}_{2i-1}\mathcal{R}$.
So $\overline{x}^2=uv+\mathrm{F}_{2i-1}\filt{R}\in\Gr(0:\mathcal{M})$, %\subseteq\grad{R}$,
thus $\overline{x}\in\rad{\Gr(0:\mathcal{M})}$.
We have obtained
${\Gr(0:\mathcal{N})\cap\Gr(0:\mathcal{M}\mod \mathcal{N})
\subseteq
\rad{\Gr(0:\mathcal{M})}}$,
whereas,
on the other hand,
as $(0:M)\subseteq(0:N)\cap(0:M\mod N)$,
it follows from~\ref{monotonia stretta} that
$
\Gr(0:\mathcal{M})\subseteq
\Gr(0:\mathcal{N})\cap\Gr(0:\mathcal{M}\mod \mathcal{N}).
$
%Now we conclude by passing to the radicals.
Now we pass to the radicals.
\end{proof}

% \begin{remark}\label{ncal1}\label{1.22}
% For each left ideal $I$ of a ring $R$ it holds $(0:R\mod I)\subseteq I$.
% More precisely,
% if $I$ is two-sided, then $(0:R\mod I)=I$, otherwise $(0:R\mod I)\subsetneq I$.
% \end{remark}

% \begin{remark}\label{ncal2}\label{1.23}
% Each cyclic balanced $R$-bimodule $M$ over a ring $R$ is isomorphic to $R\mod I$
% for some two-sided ideal $I$ of $R$.
% \end{remark}

\begin{remark}\label{filtrazione e isomorfismo}\label{1.25}
Let $(R,\filt{R})$ be a filtered ring
and $\phi:M\rightarrow N$ be a an isomorphism of left $R$-modules.
If $\filt{M}$ is an $\filt{R}$-filtration of $M$,
then there exists an $\filt{R}$-filtration $\filt{N}$ of $N$ induced by $\phi$
given by $\Fi_i\filt{N}=\phi(\Fi_i\filt{M})$
such that
% the  exact sequence of filtered left $R\text{-}$modules
% $\smash{0 \mono M \overset{\phi}{\epi} N}$ is strict,
% and therefore
there exists a graded $\grad{R}$-isomorphism 
$\smash{\Gr\phi:\grad{M}\rightarrow\grad{N}}$ induced by~$\phi$,
see~\ref{filtrazione stretta}.
Moreover, if $\filt{M}$ is good, then $\filt{N}$ is good, as one checks easily.
\end{remark}

\begin{proposition}\label{annullatore 2}\label{1.26}
Let $R$ be a commutative ring and $\filt{R}$ be a  filtration of $R$ such that
induced $\filt{R}$-filtrations on submodules and quotient modules of $R$
are good.  %\footnotemark[\value{footnote}]
Let $M$ be a finitely generated  $R$-module and $\filt{M}$ be an $\mathcal{R}$-filtration 
such that
induced $\filt{R}$-filtrations on submodules and quotient modules of $M$
are good.
% \footnote{These hypotheses hold e.g.~if $\grad{R}$ is left noetherian and $\filt{M}$ is good, see~\ref{subfilt} and~\ref{quotfilt} and~\ref{buona}.}
%\footnotemark[\value{footnote}]
Consider the annihilator ${(0:M)}$ of $M$ in $R$ provided with its induced 
$\mathcal{R}$-filtration, which we denote by ${(0:\filt{M})}$.
Then ${\rad{\Gr(0:\filt{M})}=\rad{(0:\grad{M})}}$ as ideals of the commutative ring $\grad{R}$.
\end{proposition}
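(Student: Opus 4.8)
The plan is to reduce the statement to the already-established Proposition~\ref{la2} (= \ref{1.21}) by a Noetherian induction on the module, using the fact that all relevant induced filtrations are good. First I would observe that, because $M$ is finitely generated over the commutative ring $R$, the annihilator $(0:M)$ is an intersection of primes in a way controlled by a composition-like series: if $M=0$ the claim is trivial, so assume $M\ne 0$ and pick $m\in M$, $m\ne 0$, giving a submodule $N=Rm\cong R/(0:m)$ and a quotient $M/N$, both finitely generated, both carrying good induced $\filt{R}$-filtrations by hypothesis. By Proposition~\ref{la2} applied to $N\subseteq M$,
\begin{equation*}
\rad{\Gr(0:\filt{M})}=\rad{\Gr(0:\filt{N})}\cap\rad{\Gr(0:\filt{M\mod N})}.
\end{equation*}
On the module side one has likewise $\rad{(0:\grad{M})}=\rad{(0:\grad{N})}\cap\rad{(0:\grad{M/N})}$, using the exact sequence $\grad{N}\mono\grad{M}\epi\grad{M/N}$ from \ref{sequenza esatta}: an element of $\grad{R}$ kills $\grad{M}$ up to radical iff it kills both $\grad{N}$ and $\grad{M/N}$ up to radical (the inclusion $(0:\grad{M})\subseteq(0:\grad{N})\cap(0:\grad{M/N})$ is clear, and for the reverse a power killing $\grad{N}$ times a power killing $\grad{M/N}$ kills $\grad{M}$). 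So it suffices to prove the statement for $N$ and for $M/N$ separately.

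For $M/N$ this is a genuine induction step, provided I arrange a terminating process. The clean way is to invoke that $R$ is Noetherian here (it is, since in our applications $\grad{R}$ and hence $R$ are Noetherian — more precisely, the hypothesis that induced filtrations on all submodules are good, via \ref{dfg => good} run backwards, forces $\grad R$ left Noetherian, hence $R$ Noetherian): a finitely generated module over a Noetherian ring has a finite filtration $0=M_0\subsetneq M_1\subsetneq\cdots\subsetneq M_t=M$ with each $M_k/M_{k-1}\cong R/\mathfrak p_k$ for primes $\mathfrak p_k$. Iterating the splitting above along this filtration reduces the whole proposition to the case $M=R/I$ for an ideal $I$ of $R$. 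Note $\rad{(0:\Gr(R/I))}=\rad{(0:\grad{M})}$ trivially here since $(0:R/I)=I$ and by \ref{annullatore} $\grad{I}=(0:\grad{R/I})$; but that is exactly the tautology we want, so the cyclic case is where the real content lives — I must show $\rad{\Gr(\filt I)}=\rad{(0:\grad{R/I})}=\rad{\grad I}$, i.e. $\rad{\Gr(\filt I)}=\rad{\grad I}$ where on the left $I$ is filtered as a \emph{submodule of $R$ via its own filtration then graded}, and on the right $I$ is filtered as a submodule of $\filt R$. These two graded ideals need not be equal, but I claim they have the same radical.

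The heart of the matter is therefore the cyclic case, and the key tool is \ref{GG}: the filtration $\filt{GR}$ on $\grad R$ induced by the grading reconstructs $\grad R$, i.e. $\Gr(\filt{GR})\cong\grad R$, and compatibly $\Gr(\filt{GM})\cong\grad M$. Applying this with $M=R/I$: the induced filtration of $(0:M)=I$ inside $(R,\filt R)$ gives $\grad I\subseteq\grad R$ as a (graded) ideal, and $\Gr$ of that, with respect to the grading-induced filtration, is again $\grad I$ up to the isomorphism $\Gr(\filt{GR})\cong\grad R$; meanwhile $\Gr(\filt I)$ computes $(0:\Gr(R/I))$ inside $\grad R$ by \ref{annullatore}, and $\Gr(R/I)\cong\grad{R/I}$ is annihilated exactly by $\grad I$. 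Chasing the isomorphisms, the two ideals $\rad{\Gr(\filt I)}$ and $\rad{\grad I}$ become the radical of the same ideal of $\grad R$, hence equal. The main obstacle I anticipate is bookkeeping: making the identification $\Fi_i\filt R\cong\Fi_i\filt{GR}$ of \ref{GG} interact correctly with the submodule/quotient structure so that "take induced filtration on $(0:M)$, then $\Gr$" lands on "$(0:\grad M)$" after the canonical identification — i.e. verifying that the isomorphism of \ref{GG} carries $\Gr(\filt{(0:M)})$ onto $(0:\grad M)$ at least up to radical. Once that compatibility is nailed, the Noetherian induction assembles everything and the proposition follows.
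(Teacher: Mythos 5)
Your inductive skeleton matches the paper's: induct on the number of generators of $M$, split off a cyclic submodule $N$, apply Proposition~\ref{la2} on the $\Gr(0:\filt{\,\cdot\,})$ side and the short exact sequence $\grad{N}\mono\grad{M}\epi\grad{M/N}$ on the $(0:\grad{\,\cdot\,})$ side, and reduce to the cyclic case. (Your detour through Noetherian hypotheses and prime filtrations is unnecessary --- the finite induction on the number of generators already terminates --- but it does no harm.)

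The gap is in the cyclic base case, and it is a real one. When $M\cong R/I$ via some $R$-isomorphism $\phi$, the module $M$ carries the \emph{given} good $\filt{R}$-filtration $\filt{M}$, which transports to a good filtration $\phi(\filt{M})$ on $R/I$ that is in general \emph{not} the quotient filtration $\filt{R}/\filt{I}$. So $\grad{M}$ and $\grad{R/I}$ (the latter formed from the quotient filtration) are a priori different $\grad{R}$-modules, and the identity $(0:\grad{R/I})=\grad{I}$ from \ref{annullatore} does not by itself say anything about $(0:\grad{M})$. Your proposal treats these two associated graded modules as if they were interchangeable, and the tool you reach for, \ref{GG}, cannot bridge them: \ref{GG} says that regrading $\grad{R}$ (or $\grad{M}$) by its own grading and taking $\Gr$ again gives back the same object up to canonical isomorphism --- it compares a graded object with its ``double associated graded,'' not two different good filtrations on the same underlying module. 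What is actually needed, and what the paper uses, is the pair \ref{equivalente} $+$ \ref{radicale}: any two good $\filt{R}$-filtrations on $R/I$ are equivalent, and equivalent filtrations yield associated graded modules with the same radical annihilator. With that, $\rad{(0:\grad{M})}=\rad{(0:\Gr\phi(\filt{M}))}=\rad{(0:\Gr(\filt{R}/\filt{I}))}=\rad{\grad{I}}=\rad{\Gr(0:\filt{M})}$, closing the base case. Replace the appeal to \ref{GG} with this equivalence-of-good-filtrations argument and the rest of your proof goes through.
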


\begin{proof}
We find $t\in\NN$ such that $M$ is generated by $t$ elements.
%and ${m_1,\ldots,m_t\in M}$ such that $M=\sum_{k=1}^tRm_k$
%If $t=0$, then trivially 
%$\rad{\Gr(0:\mathcal{M})}=\Gr\mathcal{R}=\rad{(0:\mathrm{G}\mathcal{M})}$.
%
If $t=1$, there exists an $R$-module isomorphism $\phi:M\rightarrow R\mod I$ for some ideal 
$I$ of $R$.
%Provide $I$ with the induced $\filt{R}$-filtration $\filt{I}$, good by hypothesis.
We furnish the $R$-module $R\mod I$ with the induced $\mathcal{R}$-filtration 
$\mathcal{R}\mod \mathcal{I}$,
good by hypothesis,
and with the $\phi$-induced $\filt{R}$-filtration, denoted $\phi(\filt{M})$,
which is good by~\ref{filtrazione e isomorfismo} since $\filt{M}$ is good by hypothesis.
By~\ref{filtrazione e isomorfismo}, %and~\ref{filtrazione stretta}
 ${{(0:\mathrm{G}\mathcal{M})}}={{(0:\mathrm{G}\phi(\mathcal{M}))}}$.
By~\ref{equivalente} and~\ref{radicale},
%and from~\ref{annullatore}
\(
{\rad{(0:\mathrm{G}\phi(\mathcal{M}))}}
={\rad{(0:\mathrm{G}\mathcal{R}\mod \mathcal{I})}}
%={\rad{\Gr\mathcal{I}}}
\).
As ${(0 : M)}={(0 : R\mod I)}=I$, % as ideals of $R$,
${(0:\filt{M})}$ is precisely the induced $\filt{R}$-filtration of $I$,
hence by \ref{annullatore} we have
\(
{{(0:\mathrm{G}\mathcal{R}\mod\mathcal{I})}}
=
{{\Gr(0:\filt{M})}}
\).
Thus 
\(
{\rad{(0:\mathrm{G}\mathcal{M})}}
=
{\rad{\Gr(0:\filt{M})}}.
\)

Now let $t>1$.
Assume inductively that the statement holds for all  $R$-modules 
generated by less than $t$ elements.
% Consider the submodule $N=Rm_t$ and the quotient module ${M\mod N}$ of $M$,
We find a cyclic  submodule $N$ of $M$ such that $M\mod N$ is generated over $R$ by
$t-1$ elements.
We provide $N$ and $M\mod N$ by the respective  induced filtrations $\mathcal{N}$ and 
$\mathcal{M\mod N}$, which are good,
and provide the  ideals ${(0:N)}$ and ${(0:M\mod N)}$ of $R$ by the respective
induced filtrations, denoted ${(0:\mathcal{N})}$ and ${(0:\mathcal{M\mod N})}$,
which are good by hypothesis.
By the  case with ${t=1}$, we have 
$\rad{\Gr(0:\mathcal{N})}=\rad{(0:\mathrm{G}\mathcal{N})}$.
%Since $M\mod N$ is generated by $t-1$ elements, namely $m_1+N,\ldots,m_{t-1}+N$,
By the induction hypothesis, we have 
$\rad{\Gr(0:\mathcal{M\mod N})}=\rad{(0:\mathrm{G}\mathcal{M\mod N})}$.
The short exact sequence $N\mono M\epi M\mod N$
of filtered  $R$-modules induces the short exact sequence
${\mathrm{G}\mathcal{N}\mono\mathrm{G}\mathcal{M}\epi\mathrm{G}\mathcal{M\mod N}}$
of graded $\grad{R}$-modules, see~\ref{sequenza esatta}.
Thus %, as the ring $\grad{R}$ is commutative,
%we get
$
{\rad{(0:\grad{M})}}
=
{\rad{(0:\grad{N})}\cap\rad{(0:\grad{M\mod N})}}
$,
whence
$
{\rad{(0:\grad{M})}}
=
{\rad{\Gr(0:\filt{N})}\cap\rad{\Gr(0:\filt{M\mod N})}}
$.
By~\ref{la2} we get
$
{\rad{(0:\grad{M})}}
=
{\rad{\Gr(0:\filt{M})}}
$.
\end{proof}

% \noindent
% We finish with a remark that will be useful later on.

\begin{remark}\label{GG2}\label{1.27}
We finish this section with a remark that will be useful later on.
Let $R$ be a commutative ring and $\filt{R}$ be a filtration of $R$, so that 
$\filt{R}$ trivially is commutative.
Let $I$ be an ideal of $R$ and provide $I$ with its induced $\mathcal{R}$-filtration, 
denoted $\mathcal{I}$,
and provide $\rad{I}$ with its induced $\mathcal{R}$-filtration, denoted $\rad{\mathcal{I}}$.
Then $\rad{\Gr\rad{\mathcal{I}}}=\rad{\grad{I}}$.
Indeed
let $\overline{x}\in\Gr\rad{\mathcal{I}}$ be a homogeneous element of degree $i\in\ZZ$.
So $\overline{x}=x+\mathrm{F}_{i-1}\mathcal{R}$ for some
$x\in\mathrm{F}_i\rad{\mathcal{I}}=\mathrm{F}_i\mathcal{R}\cap\rad{I}$.
We find $k\in\NN$ such that $x^k\in I$, and so
$x^k\in\mathrm{F}_{ki}\mathcal{R}\cap I=\mathrm{F}_{ki}\mathcal{I}$,
thus $\overline{x}^k=x^k+\mathrm{F}_{ki-1}\mathcal{R}\in\Gr\mathcal{I}$,
hence $\overline{x}\in\rad{\Gr\mathcal{I}}$.
We have shown that $\Gr\rad{\mathcal{I}}\subseteq\rad{\Gr\mathcal{I}}$.
On the other hand, by~\ref{monotonia stretta}, we have 
$\Gr\mathcal{I}\subseteq\Gr\rad{\mathcal{I}}$.
Passing to the radicals, the claim follows.
\end{remark}

\section{Weyl Algebras}\label{sec2}

\noindent
In this section let $n\in\NN$ and $K$ be a field of characteristic $0$.
We write $K[X,Y]$ for the commutative polynomial ring $K[X_1,\ldots,X_n,Y_1,\ldots,Y_n]$
% over $K$ in the indeterminates $X_1,\ldots,X_n$ and $Y_1,\ldots,Y_n$.
and denote  its subring $K[X_1,\ldots,X_n]$ by $K[X]$.

For all $(r,s)\in\NN_0\times\NN_0$ we write $(r \vec s)$ for the vector $\omega\in\NN_0^{2n}$ 
with $\omega_i=r$ and $\omega_{n+i}=s$ for $1\le i\le n$.
For all $\alpha,\beta\in\NN_0^n$ we write $(\alpha\vec\beta)$ for the vector 
$\omega\in\NN_0^{2n}$ with $\omega_i=\alpha_i$ and $\omega_{n+i}=\beta_i$ for $1\le i\le n$.
For all $t\in\NN$ and all $\alpha,\beta\in\NN_0^t$ we denote the sum 
$\sum_{i=1}^t\alpha_i\beta_i$ by $\alpha\cdot\beta$.
For all $i\in\{1,\ldots,n\}$ we put $\epsilon^i=(\delta_{ij})_{j=1}^n\in\NN_0^n$,
where $\delta_{ij}\in\NN_0$ is the Kronecker symbol.

We introduce Weyl algebras over $K$ and state some facts about them.
In doing this, we generalize certain well known results that are proved for instance
in \cite{Cou}; the here missing proofs of \ref{2.4} and \ref{2.9}
are elementary but tedious computations and can be mimicked word by word from~\cite{Cou}.

\begin{definition}\label{weyl}
The \concept{$\nth$ Weyl algebra $W$ over} $K$ is defined as the $K$-subalgebra
$K\langle\csi_1,\ldots,\csi_n,\dd_1,\ldots,\dd_n\rangle$
of ${}\End_K(K[X])$ generated by the $K$-linear endomorphisms
$\csi_1,\ldots,\csi_n$ and $\dd_1,\ldots,\dd_n$ of $K[X]$
given by
${\csi_i(p) = X_i p}$
and
$\smash{\dd_i(p)=\frac{\dd p}{\dd X_i}}$
for all ${p\in K[X]}$.
The generators  satisfy the %relations:
%which are
% referred to as the
\concept{Heisenberg commutation rules}:
$\text{(a)}~{[\csi_i,\csi_j]=0}$,
$\text{(b)}~{[\dd_i,\dd_j]=0}$,
$\text{(c)}~{[\csi_i,\dd_j]+\delta_{ij}=0}$,
where $\delta_{ij}\in K$ is the Kronecker symbol.
\end{definition}

\begin{remark}\label{basis}
As a $K$-module, $W$ has a canonical basis
$\{\csi^\lambda\dd^\mu\mid (\lambda,\mu)\in\NN_0^n\times\NN_0^n\}$,
%which is called the \concept{canonical basis} of~$W$,
see~\cite[Satz~2.7]{Bro} or~\cite[Proposition~1.2.1]{Cou}.
As a consequence, for each $w\in W$ there exists a unique function
$c_w:\NN_0^n\times\NN_0^n\rightarrow K$ of finite \concept{support}
$\supp(w)={\{(\lambda,\mu)\in\NN_0^n\times\NN_0^n\mid c_w(\lambda,\mu)\ne 0\}}$
such that $w={\sum c_w(\lambda,\mu)\csi^\lambda\dd^\mu}$ with the sum
 taken over all ${(\lambda,\mu)\in\supp(w)}$.
%In this situation 
We write $c_{\lambda\mu}$ for $c_w(\lambda,\mu)$ 
%for all $(\lambda,\mu)\in\supp(w)$
and say that %the representation
${\sum c_{\lambda\mu}\csi^\lambda\dd^\mu}$ %of $w$
is the \concept{canonical form} of $w$.
\end{remark}

\begin{definition}\label{qay}
Let
$\deg^\omega(w)={\sup{}\{\omega\cdot (\lambda\vec\mu) \mid (\lambda,\mu)\in\supp(w)\}}$
for all $\omega\in\NN_0^{2n}$ and all $w\in W$,  
the \concept{$\omega$-degree of $w$} with values in $\ZZ\cup\{-\infty\}$.
\end{definition}

\begin{proposition}\label{degw}\label{2.4}
Let $\omega\in\NN_0^{2n}$ and let $u,v\in W$.
Then one has
{\upshape (a)} $\deg^{\omega}(u+v) \le \max\{\deg^{\omega}(u) , \deg^{\omega}(u)\}$,
{\upshape (b)}~$\deg^\omega([u,v])
\!\le\deg^\omega(u)+\deg^\omega(v)-{\min_{1\le i\le n}}\{\omega_i+\omega_{n+i}\}$,
{\upshape (c)}~$\deg^\omega(uv)=\deg^\omega(u)+\deg^\omega(v)$.
Equality holds in {\upshape (a)} if ${}\deg^\omega(u)\ne\deg^\omega(v)$.
\hfill\qedsymbol
\end{proposition}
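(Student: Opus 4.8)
The plan is to reduce everything to the canonical form of Remark~\ref{basis} together with one computational lemma. First I would record, by an easy induction from the Heisenberg rules of~\ref{weyl} (one only moves $\dd^\mu$ past $\csi^{\lambda'}$, the factors $\csi^\lambda$ and $\dd^{\mu'}$ being spectators), that for all $\lambda,\mu,\lambda',\mu'\in\NN_0^n$
\[
\csi^\lambda\dd^\mu\,\csi^{\lambda'}\dd^{\mu'}=\csi^{\lambda+\lambda'}\dd^{\mu+\mu'}+\sum_{0<\rho\le\min(\mu,\lambda')}c_\rho\,\csi^{\lambda+\lambda'-\rho}\dd^{\mu+\mu'-\rho}
\]
for suitable $c_\rho\in\QQ$. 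The only features used afterwards are that the leading summand $\csi^{\lambda+\lambda'}\dd^{\mu+\mu'}$, obtained for $\rho=0$, has coefficient $1$, and that every other summand is a scalar multiple of some $\csi^{\lambda+\lambda'-\rho}\dd^{\mu+\mu'-\rho}$ with $0<\rho\le\min(\mu,\lambda')$; the exact values (in particular the positivity) of the $c_\rho$ are not needed. Part~(a) then follows at once: from $c_{u+v}=c_u+c_v$ one has $\supp(u+v)\subseteq\supp(u)\cup\supp(v)$, so $\deg^\omega(u+v)\le\max\{\deg^\omega(u),\deg^\omega(v)\}$; and if $\deg^\omega(u)>\deg^\omega(v)$, then any $(\lambda,\mu)\in\supp(u)$ with $\omega\cdot(\lambda\vec\mu)=\deg^\omega(u)$ is absent from $\supp(v)$, hence persists in $u+v$, forcing $\deg^\omega(u+v)\ge\deg^\omega(u)$ and thus equality.

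For~(c) put $d=\deg^\omega(u)$ and $e=\deg^\omega(v)$, and assume $u,v\ne0$. Expanding $uv$ bilinearly over $\supp(u)\times\supp(v)$ by the lemma, every monomial $\csi^{\lambda+\lambda'-\rho}\dd^{\mu+\mu'-\rho}$ it produces (with $(\lambda,\mu)\in\supp(u)$, $(\lambda',\mu')\in\supp(v)$) has $\omega$-degree $\omega\cdot((\lambda+\lambda')\vec(\mu+\mu'))-\omega\cdot(\rho\vec\rho)\le d+e$ because $\omega\ge0$; with~(a) this gives $\deg^\omega(uv)\le d+e$. For the reverse inequality set $S=\{\,i\mid 1\le i\le n,\ \omega_i+\omega_{n+i}=0\,\}$, identify the $K$-vector space $W$ with $K[X,Y]$ through $\csi^\lambda\dd^\mu\mapsto X^\lambda Y^\mu$, and let $h_u,h_v\in K[X,Y]$ be the sums of the monomials of $u,v$ of maximal $\omega$-degree, nonzero since $u,v\ne0$. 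Collect in $uv$ the part $P$ of $\omega$-degree exactly $d+e$: in the expansion, contributions to $P$ force $\omega\cdot(\lambda\vec\mu)=d$, $\omega\cdot(\lambda'\vec\mu')=e$ and $\omega\cdot(\rho\vec\rho)=0$, that is $\rho$ supported on $S$, and assembling them shows $P=h_uh_v+g$ in $K[X,Y]$, where $g$ gathers the monomials $c_\rho\,X^{\lambda+\lambda'-\rho}Y^{\mu+\mu'-\rho}$ arising from the $\rho\ne0$ supported on $S$. Now equip $K[X,Y]$ with the $\ZZ$-grading giving weight $1$ to each $X_i,Y_i$ with $i\in S$ and weight $0$ to all other variables; the monomials of $g$ have weight smaller by $2|\rho|>0$ than the corresponding $\rho=0$ monomials, so the top-weight component of $P=h_uh_v+g$ coincides with that of $h_uh_v$, which, $K[X,Y]$ being a domain, is the nonzero product of the top-weight components of $h_u$ and $h_v$. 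Hence $P\ne0$, so $\deg^\omega(uv)=d+e$; in particular $uv\ne0$, which incidentally reproves that $W$ is a domain.

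For~(b) assume $u,v\ne0$. If $M:=\min_{1\le i\le n}\{\omega_i+\omega_{n+i}\}=0$, then~(b) reads $\deg^\omega([u,v])\le d+e$, which follows from $\deg^\omega(uv-vu)\le\max\{\deg^\omega(uv),\deg^\omega(vu)\}=d+e$ by~(a) and~(c); so assume $M>0$. For monomials the lemma shows that $\csi^\lambda\dd^\mu\,\csi^{\lambda'}\dd^{\mu'}$ and $\csi^{\lambda'}\dd^{\mu'}\,\csi^\lambda\dd^\mu$ share the leading summand $\csi^{\lambda+\lambda'}\dd^{\mu+\mu'}$ with coefficient $1$, which therefore cancels in their commutator, leaving a combination of terms $\csi^{\lambda+\lambda'-\rho}\dd^{\mu+\mu'-\rho}$ with $\rho\ne0$, each of $\omega$-degree $\omega\cdot((\lambda+\lambda')\vec(\mu+\mu'))-\omega\cdot(\rho\vec\rho)\le\omega\cdot((\lambda+\lambda')\vec(\mu+\mu'))-M$, because $\omega\cdot(\rho\vec\rho)=\sum_{i=1}^n\rho_i(\omega_i+\omega_{n+i})\ge M$ whenever $\rho\ne0$. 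Expanding $[u,v]$ bilinearly over $\supp(u)\times\supp(v)$ and using~(a) yields $\deg^\omega([u,v])\le d+e-M$. The main obstacle throughout is the lower bound in~(c): when $S\ne\emptyset$ the naive expectation ``$h_{uv}=h_uh_v$'' breaks down, as the monomial products genuinely acquire lower terms of the same $\omega$-degree, and these must be prevented from cancelling $h_uh_v$; the auxiliary $S$-grading is precisely what rules that out, while everything else is routine bookkeeping with the canonical form.
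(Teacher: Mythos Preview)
The paper does not actually supply a proof here: the preamble to Section~\ref{sec2} says that the argument is an elementary but tedious computation that can be mimicked word for word from Coutinho~\cite{Cou}, and the statement is closed with a bare tombstone. Your argument is correct and is precisely such an elaboration, built on the standard product formula $\csi^\lambda\dd^\mu\,\csi^{\lambda'}\dd^{\mu'}=\sum_{0\le\rho\le\min(\mu,\lambda')}c_\rho\,\csi^{\lambda+\lambda'-\rho}\dd^{\mu+\mu'-\rho}$ with $c_0=1$ (in fact $c_\rho=\prod_i\binom{\mu_i}{\rho_i}\binom{\lambda'_i}{\rho_i}\rho_i!$).

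One remark on scope. Coutinho treats only specific weights such as $(1\vec1)$ and $(0\vec1)$, all of which lie in $\Omega$; for these your set $S$ is empty, so $g=0$ and the lower bound in~(c) reduces to $P=h_uh_v\ne0$ immediately. Since the proposition is stated for arbitrary $\omega\in\NN_0^{2n}$, the case $S\ne\emptyset$ genuinely needs an extra idea, and your auxiliary $S$-grading supplies it cleanly: the top $S$-weight of $h_uh_v$ is $w_S(h_u)+w_S(h_v)$ with top component the nonzero product of the top $S$-components of $h_u$ and $h_v$, while every monomial of $g$ has $S$-weight at most $w_S(h_u)+w_S(h_v)-2$. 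Thus your proof is in fact a little sharper than a literal transcription of~\cite{Cou}; the paper only uses the proposition for $\omega\in\Omega$ downstream (cf.~\ref{sigma mult}--\ref{2.9}), so this extra generality is not needed later, but it does match the statement as written.
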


\begin{definition}\label{w-filt}
Let $\omega\in\NN_0^{2n}$.
Consider the family $\Fi^\omega W=(\Fi^\omega_i W)_{i\in\ZZ}$ defined by 
$\Fi^\omega_i W=\{w\in W\mid \deg^\omega(w)\le i\}$.
Then $\Fi^\omega W$ is a filtration of $W$ by \ref{degw}.
We denote by $\Gr^\omega W$ the associated graded ring  of $W$ with respect to $\Fi^\omega W$,
and by $\Gr_i^\omega W$ the $\ith$ homogeneous component of $\Gr^\omega W$.

Given any  \concept{$\omega$-filtration} ${\Fi^\omega W}$-filtration
$\Fi^\omega M=(\Fi^\omega_i M)_{i\in\ZZ}$ of a left $W$-module $M$, 
we denote by $\Gr^\omega M$ the associated graded left $\Gr^\omega W$-module
associated to $M$ with respect to $\Fi^\omega M$,
and by $\Gr_i^\omega M$  the $\ith$ homogeneous component of $\Gr^\omega M$.

We write $\sigma^\omega$ for the symbol map $W\rightarrow\Gr^\omega W$, and
$\sigma^\omega_i$ for the $\ith$ symbol map $\Fi^\omega_i W\rightarrow \Gr_i^\omega W$.
Thus $\sigma^\omega(w)=\sigma^\omega_{\deg^\omega(w)}(w)$ for all $w\in W$.
\end{definition}

\begin{definition}
We define
$\Omega=\{\omega\in\NN_0^{2n}\mid \omega_{i}+\omega_{n+i}>0 \text{ whenever } 1\le i\le n\}$,
the \concept{natural polynomial region of $W$}.
%The reason for this name will be clear by Theorem~\ref{iso}.
\end{definition}

\begin{remark}\label{sigma mult}
Let $\omega\in\Omega$ and $v,w\in W$.
As $\deg^\omega(uv)=\deg^\omega(u)+\deg^\omega(v)$ by \ref{degw},
it holds $\sigma^\omega(uv)=\sigma^\omega(u)\sigma^\omega(v)$.
\end{remark}

\begin{remark}\label{omegacomm}
For all $\omega\in\Omega$ the filtration $\Fi^\omega W$ of $W$ is commutative by \ref{degw},
so that the ring $\Gr^\omega W$ is commutative.
\end{remark}

\noindent
Remarks \ref{sigma mult} and \ref{omegacomm}, the canonical injection
$K\mono\Gr^\omega W$,
and the universal property of commutative polynomial rings imply the following theorem.

\begin{theorem}\label{iso}\label{2.9}
For each $\omega\in\Omega$
one has an isomorphism of commutative $K$-algebras
$\psi^\omega:K[X,Y]\rightarrow\Gr^{\omega} W$,
${\sum_{(\lambda,\mu)\in\NN_0^n\times\NN_0^n}}
c_{\lambda\mu}X^\lambda Y^\mu
\mapsto
\textstyle{\sum_{(\lambda,\mu)\in\NN_0^n\times\NN_0^n}}
c_{\lambda\mu}\sigma^\omega(\csi^\lambda)\sigma^\omega(\dd^\mu)$,
which  is graded if we put
$\deg(X_i)=\omega_i$ and $\deg(Y_i)=\omega_{n+i}$
for all $1\le i\le n$.
\hfill\qedsymbol
\end{theorem}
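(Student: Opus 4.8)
The plan is to invoke the universal property of the commutative polynomial ring, exactly as the paragraph preceding the statement indicates, and then to establish bijectivity by transporting the canonical $K$-basis of $W$ to $\Gr^\omega W$. First I would assemble the three inputs: by \ref{omegacomm} the ring $\Gr^\omega W$ is commutative for $\omega\in\Omega$; it carries the canonical $K$-algebra injection $K\mono\Gr^\omega W$ (landing in degree $0$); and it contains the elements $\sigma^\omega(\csi_i)$ and $\sigma^\omega(\dd_i)$. The universal property of $K[X,Y]$ then yields a unique $K$-algebra homomorphism $\psi^\omega\colon K[X,Y]\to\Gr^\omega W$ with $\psi^\omega(X_i)=\sigma^\omega(\csi_i)$ and $\psi^\omega(Y_i)=\sigma^\omega(\dd_i)$ for $1\le i\le n$.

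Next I would check that $\psi^\omega$ is the map in the statement. Since $\omega\in\Omega$, the symbol map $\sigma^\omega$ is multiplicative by \ref{sigma mult}; iterating this gives $\sigma^\omega(\csi^\lambda)=\prod_i\sigma^\omega(\csi_i)^{\lambda_i}$ and $\sigma^\omega(\dd^\mu)=\prod_i\sigma^\omega(\dd_i)^{\mu_i}$, whence $\psi^\omega(X^\lambda Y^\mu)=\sigma^\omega(\csi^\lambda)\sigma^\omega(\dd^\mu)=\sigma^\omega(\csi^\lambda\dd^\mu)$, and the general formula follows by $K$-linearity. For gradedness I would use \ref{degw}(c): $\deg^\omega(\csi^\lambda)=\sum_i\lambda_i\omega_i$ and $\deg^\omega(\dd^\mu)=\sum_i\mu_i\omega_{n+i}$, so $\sigma^\omega(\csi^\lambda\dd^\mu)$ is homogeneous of degree $\omega\cdot(\lambda\vec\mu)$, which is precisely the degree of $X^\lambda Y^\mu$ under the assignment $\deg(X_i)=\omega_i$, $\deg(Y_i)=\omega_{n+i}$; hence $\psi^\omega$ respects the gradings.

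The substance is bijectivity, which I would obtain by showing that $\psi^\omega$ carries the monomial $K$-basis $\{X^\lambda Y^\mu\}$ of $K[X,Y]$ onto a $K$-basis of $\Gr^\omega W$, namely $\{\sigma^\omega(\csi^\lambda\dd^\mu)\}$. Fix $i\in\ZZ$. The surjection $\sigma^\omega_i\colon\Fi^\omega_i W\to\Gr^\omega_i W$ of $K$-modules has kernel $\Fi^\omega_{i-1}W$; by \ref{basis} and Definition \ref{qay}, $\Fi^\omega_i W$ has $K$-basis $\{\csi^\lambda\dd^\mu : \omega\cdot(\lambda\vec\mu)\le i\}$ while $\Fi^\omega_{i-1}W$ is spanned by those with $\omega\cdot(\lambda\vec\mu)\le i-1$; hence the images of the remaining basis monomials, i.e.\ the elements $\sigma^\omega(\csi^\lambda\dd^\mu)$ with $\omega\cdot(\lambda\vec\mu)=i$, form a $K$-basis of $\Gr^\omega_i W$. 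Each such element equals $\psi^\omega(X^\lambda Y^\mu)$ for the monomial $X^\lambda Y^\mu$ of $\omega$-degree $i$; summing over $i$, the map $\psi^\omega$ matches these bases, so it is a $K$-linear isomorphism, and being a ring homomorphism it is an isomorphism of graded $K$-algebras.

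There is no real obstacle here: the statement is essentially the observation that passing to $\Gr^\omega$ of $W$ along the $\omega$-degree filtration leaves the canonical basis intact and merely records $\omega$-degrees. The only thing to keep straight is the degree bookkeeping---that the monomials $\csi^\lambda\dd^\mu$ with $\omega\cdot(\lambda\vec\mu)=i$ are exactly the ones whose symbols span $\Gr^\omega_i W$, which is immediate from the definition of $\deg^\omega$ in \ref{qay} together with the basis property \ref{basis}. If one prefers to sidestep the explicit basis, injectivity can instead be deduced from surjectivity and gradedness by comparing $K$-dimensions degree by degree, but the basis description is the most transparent route and also renders the formula for $\psi^\omega$ and its gradedness self-evident.
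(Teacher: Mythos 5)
Your proof is correct and follows exactly the route the paper indicates in the sentence preceding the statement: invoke \ref{omegacomm}, \ref{sigma mult}, the canonical injection $K\mono\Gr^\omega W$, and the universal property of $K[X,Y]$ to construct $\psi^\omega$, then verify bijectivity by transporting the canonical basis of \ref{basis}. The paper omits the explicit verification (referring instead to \cite{Cou}), but what you have supplied—spanning/independence of $\{\sigma^\omega(\csi^\lambda\dd^\mu):\omega\cdot(\lambda\vec\mu)=i\}$ in each $\Gr_i^\omega W$—is precisely the standard filling-in of that gap.
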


\begin{remark} %eliminabile
By \ref{iso}, \ref{mavala}, and \ref{degw},
the Weyl algebras are left noetherian domains.
\end{remark}

\begin{remark}
All what we have defined and said in this section about Weyl algebras can be done and proved
in the same way for the commutative polynomial ring $K[X,Y]$, too.
In this situation we may even drop the hypothesis that the field be of characteristic~$0$
and may consider  whole $\NN_0^{2n}$ instead of $\Omega$.
We shall use a similar notation as introduced above for Weyl algebras,
with one exception:
given any $\nu\in\NN_0^{2n}$, we shall write
$\tau_i^\nu$ for the $\ith$ symbol map $\Fi_i^\nu K[X,Y]\rightarrow\Gr_i^\nu K[X,Y]$
and
$\tau^\nu$ for the symbol map $K[X,Y]\rightarrow\Gr^\nu K[X,Y]$,
in order to distinguish them from the symbol maps of the $\nth$ Weyl algebra.
\end{remark}

\section{Gr\"obner Bases in Weyl Algebras}\label{sec3}

\noindent
In this section
we remind the notion of universal Gr\"obner bases in Weyl algebras and state their existence.
The proof of this fact can be found in \cite{Bol1} and \cite{Bol2}; see also \cite{Wei}.
In \cite{Stu} the same statement is proved for commutative polynomial rings;
a similar proof exists for Weyl algebras.
%We shall use the results of this section in section~\ref{sect:tatatataa}.

We keep the notation of the previous section,
and denote
by $\MMM$ the canonical $K$-basis 
$\{X^\lambda Y^\mu\mid(\lambda,\mu)\in\NN_0^n\times\NN_0^n\}$ of~$K[X,Y]$
consisting of the \concept{monomials} $X^\lambda Y^\mu$,
and by $\NNN$ the canonical $K$-basis 
$\{\xi^\lambda\dd^\mu\mid(\lambda,\mu)\in\NN_0^n\times\NN_0^n\}$ of~$W$
consisting of the \concept{normal monomials} $\xi^\lambda\dd^\mu$.

For each $\omega\in\Omega$ we shall tacitly identify the ring $\Gr^\omega W$ with $K[X,Y]$
by means of the $K$-algebra isomorphism $\psi^\omega$ of \ref{iso} and hence
for each left ideal $L$  consider $\Gr^\omega L$ as an ideal of $K[X,Y]$.
Similarly for each $\nu\in\NN_0^{2n}$ we shall identify $\Gr^\nu K[X,Y]$
with $K[X,Y]$ and thus
for each ideal $I$ of $K[X,Y]$  consider $\Gr^\nu I$ as an ideal of $K[X,Y]$.

\begin{definition}
A  \concept{normal ordering},
or  \concept{monomial ordering} in~\cite{Cox},
or \concept{admissible ordering} in~\cite{Wei},
or \concept{term ordering} in~\cite{Sai},
is a total ordering $\preceq$ on $\NN_0^n\times\NN_0^n$ such that
it holds
well-foundedness:
${(0,0)\preceq (\lambda,\mu)}$,
and
compatibility:
${(\lambda,\mu)\preceq (\rho,\sigma)}
\dann
{{(\lambda+\alpha,\mu+\beta)}\preceq {(\rho+\alpha,\sigma+\beta)}}$.
%for all $\lambda,\mu,\rho,\sigma,\alpha,\beta\in\NN_0^n$.
With abuse of notation we write $\xi^\lambda\dd^\mu\preceq\xi^\rho\dd^\sigma$ and
$X^\lambda Y^\mu\preceq X^\rho Y^\sigma$ whenever ${(\lambda,\mu)\preceq (\rho,\sigma)}$.
We denote by $\NO$ the set of all normal orderings.
\end{definition}

\begin{example}\label{lassie}
Lexicographic orderings %$\mathrm{\preceq_\mathrm{lex}}$
% on $\NNN$
% defined by
% \[
% {\xi^\lambda\dd^\mu\preceq_\mathrm{lex}\xi^\rho\dd^\sigma }
% \gdw
% {(\lambda = \rho \und \mu = \sigma)} 
% \oder
% {(\lambda = \rho \und \mu \ne \sigma \und \mu_{i( \mu,\sigma )} < \sigma_{i( \mu,\sigma )})}
% \oder
% {(\lambda \ne \rho \und \lambda_{i( \lambda,\rho )} < \rho_{i( \lambda,\rho )})}
% \]
% for all $\lambda,\mu,\rho,\sigma\in\NN_0^n$,
% where  $i(\alpha,\beta)=\min{}\{\,j\mid 1\le j\le n, \, \alpha_j\ne\beta_j\,\}$
% for all $\alpha,\beta\in\NN_0^n$ with $\alpha\ne\beta$,
are normal orderings.
% of $W$.
\end{example}

\begin{remark}\label{Psi}
There exists a $K$-module isomorphism $\Phi:W\rightarrow K[X,Y]$ which maps
the canonical basis  $\NNN$ of $W$ to the canonical basis $\MMM$ of $K[X,Y]$ by the rule
$\xi^\lambda\dd^\mu\mapsto X^\lambda Y^\mu$.
\end{remark}

\begin{notation}
Let ${\preceq} \in \NO$.
For  $w \in W\wo\{0\}$
we write $\lt_\preceq(w)$ for the greatest normal monomial in the canonical form of $w$
with respect to~${\preceq}$.
%, and $\lc_\preceq(w)$ is the coefficient of $\lt_\preceq(w)$ in the canonical form of~$w$.
We denote $\Phi(\lt_\preceq(w))$ by $\LT_\preceq(w)$.
% and correspondingly write $\LC_\preceq(w)$ instead of $\lc_\preceq(w)$
%when working in $K[X,Y]$.
Given $L\subseteq W$, we often denote by $\LT_\preceq(L)$
the ideal $\sum_{x\in L\wo\{0\}}K[X,Y]\LT_\preceq(x)$ of $K[X,Y]$.
For $p\in K[X,Y]\wo\{0\}$ and $I\subseteq K[X,Y]$
we define $\LT_\preceq(p)$ and $\LT_{\preceq}(I)$ similarly.
\end{notation}

\begin{definition}
Let $L\subseteq W$ be a left ideal and let ${\preceq} \in \NO$.
According to $\text{\cite{Sai}}$, we say that
a finite subset $B$ of $L$
is a \concept{Gr\"obner basis} of $L$ with respect to~$\mathrm{\preceq}$,
or a ${\preceq}$-Gr\"obner basis of $L$,
if it holds ${L=\sum_{b\in B}Wb}$ and
${\LT_\preceq(L)=\sum_{b\in B\wo\{0\}}K[X,Y]\LT_\preceq(b)}$.
Similarly we define a ${\preceq}$-Gr\"obner basis of an ideal $I\subseteq K[X,Y]$,
see~\cite{Cox}.
\end{definition}

\begin{theorem}\label{GB exist}
Let $L\subseteq W$ be a left ideal let and ${\preceq} \in \NO$.
Then $L$ admits a Gr\"obner basis with respect to~${\preceq}$.
\end{theorem}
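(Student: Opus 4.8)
The plan is to deduce the existence of a $\preceq$-Gr\"obner basis of $L$ from the noetherianity of the commutative polynomial ring $K[X,Y]$ together with the structural results of Section~\ref{sec1} applied to the $\omega$-filtration attached to $\preceq$. First I would fix a weight vector $\omega\in\Omega$ that \emph{refines} $\preceq$ on a sufficiently large finite set of exponents, or rather work directly with the filtration $\Fi^\omega W$ and the induced filtration $\Fi^\omega L$ of the left ideal $L$; the point is that $\Gr^\omega W\cong K[X,Y]$ by \ref{iso} is left noetherian, so by \ref{subfilt} the induced filtration on $L$ is good and $\Gr^\omega L$ is a finitely generated, hence finitely presented, ideal of $K[X,Y]$. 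Alternatively, and more in keeping with the chosen formalism, I would observe that $\LT_\preceq(L)$ is an ideal of $K[X,Y]$ and invoke the Hilbert basis theorem to get finitely many elements whose leading terms generate it.

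The core of the argument is then the standard ``lifting'' step. Choose $b_1,\dots,b_r\in L\setminus\{0\}$ such that $\LT_\preceq(b_1),\dots,\LT_\preceq(b_r)$ generate $\LT_\preceq(L)$; such a finite choice exists by noetherianity. I claim $B=\{b_1,\dots,b_r\}$ is a $\preceq$-Gr\"obner basis. By construction the leading-term condition $\LT_\preceq(L)=\sum_i K[X,Y]\LT_\preceq(b_i)$ holds, so the remaining task is to show $L=\sum_i W b_i$. Suppose not, and pick $x\in L\setminus\sum_i Wb_i$ with $\lt_\preceq(x)$ minimal with respect to $\preceq$ — this uses well-foundedness of $\preceq$, part of the definition of a normal ordering. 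Since $x\in L$, its leading monomial lies in $\LT_\preceq(L)$, so after identifying $W$ with $K[X,Y]$ via $\Phi$ (\ref{Psi}) there is an index $i$ and a monomial $X^\alpha Y^\beta$ with $\LT_\preceq(x)=X^\alpha Y^\beta\,\LT_\preceq(b_i)$ up to a scalar; lifting $X^\alpha Y^\beta$ to the normal monomial $\csi^\alpha\dd^\beta\in W$ and subtracting an appropriate scalar multiple of $\csi^\alpha\dd^\beta b_i$ from $x$ produces $x'\in L\setminus\sum_i Wb_i$ with $\lt_\preceq(x')\prec\lt_\preceq(x)$, contradicting minimality. Here one uses the Heisenberg commutation rules of \ref{weyl} and part~(c) of \ref{degw}, which guarantee that multiplying by $\csi^\alpha\dd^\beta$ shifts the leading normal monomial exactly as in the commutative case, the lower-order correction terms being controlled by \ref{degw}(b).

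The main obstacle — really the only genuinely Weyl-algebraic subtlety — is precisely this noncommutative cancellation: in $W$ the product $\csi^\alpha\dd^\beta b_i$ is not simply the ``monomial times $b_i$'' of the commutative theory, and one must check that its leading normal monomial (in canonical form) equals $X^\alpha Y^\beta\LT_\preceq(b_i)$ and that the commutators it introduces are strictly $\preceq$-smaller. This is exactly what \ref{degw}, \ref{basis}, and \ref{sigma mult} are for: passing to $\sigma^\omega$ turns the leading behaviour into a computation in the commutative ring $\Gr^\omega W\cong K[X,Y]$, where compatibility of $\preceq$ (the property ${(\lambda,\mu)\preceq(\rho,\sigma)}\dann{(\lambda+\alpha,\mu+\beta)}\preceq{(\rho+\alpha,\sigma+\beta)}$) does the rest. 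I would phrase the whole induction in terms of the well-founded order $\preceq$ so that termination is immediate, and cite \ref{GB exist}'s predecessors only for the noetherian input; the proof is short once the leading-term bookkeeping via \ref{degw} is in place.
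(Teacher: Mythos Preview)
The paper does not prove this theorem at all: its entire proof is the sentence ``See \cite[Corollary~9.7]{Bol1} or \cite[Theorem~2.15]{Bol2} or \cite[Theorem~1.1.10]{Sai}.'' So there is no in-paper argument to compare yours against; you have supplied what the paper outsources.

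Your core argument (second paragraph) is the standard one and is correct: take $b_1,\dots,b_r\in L$ whose leading monomials generate the monomial ideal $\LT_\preceq(L)\subseteq K[X,Y]$ (Hilbert basis theorem), then show $L=\sum Wb_i$ by well-founded descent on $\lt_\preceq$. The reduction step works, and your identification of the one genuinely noncommutative point --- that $\lt_\preceq(\xi^\alpha\dd^\beta\, b_i)$ equals $\xi^{\alpha+\lambda}\dd^{\beta+\mu}$ when $\lt_\preceq(b_i)=\xi^\lambda\dd^\mu$, with all commutator corrections strictly $\preceq$-smaller --- is exactly right.

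Two minor criticisms of presentation. First, your opening paragraph about fixing $\omega\in\Omega$ refining $\preceq$ and using $\Gr^\omega L$ is never actually used; you abandon it for the direct $\LT_\preceq$ argument, so drop it. Second, your justification of the leading-term cancellation leans on \ref{degw} and $\sigma^\omega$, but these concern $\omega$-degree, not the normal ordering $\preceq$; no $\omega$ has been fixed at that point. What you actually need is the straightening identity $\xi^\alpha\dd^\beta\cdot\xi^\lambda\dd^\mu=\xi^{\alpha+\lambda}\dd^{\beta+\mu}+\sum_{0\ne\gamma}c_\gamma\,\xi^{\alpha+\lambda-\gamma}\dd^{\beta+\mu-\gamma}$, followed by the observation that $(\alpha+\lambda-\gamma,\beta+\mu-\gamma)\prec(\alpha+\lambda,\beta+\mu)$ for $\gamma\ne 0$, which comes from well-foundedness $(0,0)\prec(\gamma,\gamma)$ plus compatibility of $\preceq$. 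That is a direct combinatorial fact, not a consequence of \ref{degw} or \ref{sigma mult}. With that correction the proof is clean.
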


\begin{proof}
See \cite[Corollary~9.7]{Bol1} or \cite[Theorem~2.15]{Bol2} or \cite[Theorem~1.1.10]{Sai}.
\end{proof}

\begin{definition}
Let $L$ be a left ideal of $W$.
A finite subset $U$ of $L$ is a \concept{universal Gr\"obner basis} of $L$ if $U$ is a
${\preceq}$-Gr\"obner basis of $L$
for each normal ordering~${\preceq}$.
\end{definition}

\begin{theorem}\label{UGB exist}
Each left ideal $L$ of $W$ admits a universal Gr\"obner basis.
\end{theorem}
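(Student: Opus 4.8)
The plan is to reduce the existence of a \emph{universal} Gröbner basis to the \emph{finiteness} of the set of leading term ideals $\{\LT_\preceq(L)\mid {\preceq}\in\NO\}$, and then to patch together ordinary Gröbner bases for finitely many representative orderings. First I would show that this set of monomial ideals is finite. One route: invoke the classical result of Weispfenning / Sturmfels that a polynomial ideal has only finitely many distinct leading term ideals as the term order varies (the Gröbner fan), and transfer it to the Weyl-algebra setting via the $\omega$-filtration machinery already developed. Concretely, given a left ideal $L\subseteq W$ and a normal ordering $\preceq$, one first picks an $\omega\in\Omega$ with $\deg^\omega$ refining the initial segment of $\preceq$ needed to compute leading terms of a fixed finite generating set, passes to $\Gr^\omega L\subseteq K[X,Y]$ using \ref{iso} and the tight relation between Gröbner bases of $\omega$-filtered left ideals and Gröbner bases of their graded ideals promised in the section introduction, and then applies the commutative Gröbner fan result to $\Gr^\omega L$. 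Since only finitely many ideals $\Gr^\omega L$ occur (this will be the content of a separate finiteness statement, or can be absorbed here), and each contributes only finitely many leading term ideals, the set $\{\LT_\preceq(L)\mid{\preceq}\in\NO\}$ is finite.

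Next, let $I_1,\ldots,I_m$ enumerate the distinct ideals $\LT_\preceq(L)$, and for each $k$ fix one ordering $\preceq_k$ with $\LT_{\preceq_k}(L)=I_k$. By \ref{GB exist} choose a $\preceq_k$-Gröbner basis $B_k$ of $L$. I claim $U=\bigcup_{k=1}^m B_k$ is a universal Gröbner basis. Indeed $U\subseteq L$ is finite, and $\sum_{u\in U}Wu=L$ since already $\sum_{b\in B_1}Wb=L$. Now fix an arbitrary ${\preceq}\in\NO$; then $\LT_\preceq(L)=I_k$ for some $k$, so $\LT_\preceq(L)=\LT_{\preceq_k}(L)=\sum_{b\in B_k\setminus\{0\}}K[X,Y]\LT_{\preceq_k}(b)$. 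The subtle point is that leading terms are taken with respect to \emph{different} orderings on the two sides, so I must argue that $\LT_\preceq(L)$ is generated by $\{\LT_\preceq(u)\mid u\in U\setminus\{0\}\}$, not by the $\LT_{\preceq_k}(b)$. This follows from the standard characterization of Gröbner bases: a finite $U\subseteq L$ with $\sum_{u\in U}Wu=L$ is a $\preceq$-Gröbner basis iff every $\LT_\preceq(x)$ for $x\in L\setminus\{0\}$ lies in $\sum_{u\in U\setminus\{0\}}K[X,Y]\LT_\preceq(u)$; and since $B_k\subseteq U$ and $B_k$ alone already suffices to represent $\LT_\preceq(L)$ when $\LT_\preceq=\LT_{\preceq}$—here one uses that $B_k$ is a Gröbner basis with $\LT_{\preceq_k}(L)=\LT_{\preceq}(L)$ together with the fact that $\LT_\preceq(b)$ and $\LT_{\preceq_k}(b)$, while possibly distinct monomials, both lie in $\LT_\preceq(L)$—the reduction/division argument in $W$ with respect to $\preceq$ terminates and expresses any $x\in L$ with $\LT_\preceq$-leading behavior controlled by $U$. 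So $U$ is a $\preceq$-Gröbner basis for every $\preceq$, hence universal.

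The main obstacle I anticipate is the first step: establishing that only finitely many leading term ideals $\LT_\preceq(L)$ arise. The cleanest approach is to cite the commutative Gröbner fan (Mora–Robbiano, or \cite{Stu} which is already in the bibliography) and to carry it across to $W$; the transfer is not entirely formal because $\preceq$ is an ordering on $\NN_0^n\times\NN_0^n$ that need not come from a single $\omega$, so one has to handle the ``tie-breaking'' hierarchy of weight vectors. A robust way to finesse this is: for a fixed finite Gröbner basis candidate the computation of leading terms only depends on comparing finitely many pairs of monomials of bounded degree, so one may replace $\preceq$ by a sufficiently generic rational weight vector $\omega$ (or a short chain of them) agreeing with $\preceq$ on that finite comparison set, thereby landing inside the already-developed $\omega$-filtration framework where \ref{iso} and the Gröbner-basis/graded-ideal correspondence apply. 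With finiteness in hand, the patching argument above is routine and the theorem follows. A self-contained treatment of this finiteness is deferred to \cite{Bol1}, \cite{Bol2}.
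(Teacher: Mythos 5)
The paper offers no internal proof of this theorem; it simply cites \cite{Bol1}, \cite{Bol2}, and (for the commutative analogue) \cite{Stu}. So your attempt has to stand on its own merits, and there is a genuine gap in the patching step that you yourself flag but do not close.

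The gap is this: you take $B_k$ to be an arbitrary $\preceq_k$-Gr\"obner basis of $L$ and then, for another ordering $\preceq$ with $\LT_\preceq(L)=\LT_{\preceq_k}(L)$, you want $B_k$ (or at least $U$) to be a $\preceq$-Gr\"obner basis. But equality of the two leading-term \emph{ideals} does not by itself force $\{\LT_\preceq(b)\mid b\in B_k\}$ to generate $\LT_\preceq(L)$, because $\LT_\preceq(b)$ and $\LT_{\preceq_k}(b)$ may be different monomials for a non-reduced $b$. Your closing sentence (``both lie in $\LT_\preceq(L)$ \ldots\ the division argument terminates'') is not an argument: $\LT_\preceq(b)\in\LT_\preceq(L)$ is trivially true for every $b\in L$ and says nothing about generation. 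The correct way to close this is to take $B_k$ to be the \emph{reduced} $\preceq_k$-Gr\"obner basis and to prove the key lemma that the reduced Gr\"obner basis of $L$ depends only on the ideal $\LT_\preceq(L)$ and not on the ordering $\preceq$ producing it. The proof of that lemma is short: all tail monomials of a reduced Gr\"obner basis element $b$ are standard monomials (not in the leading ideal), while $\LT_\preceq(b)$ must lie in the leading ideal; since the leading ideals agree, the unique monomial of $b$ lying in $\LT_\preceq(L)=\LT_{\preceq_k}(L)$ is the same for both orderings, hence $\LT_\preceq(b)=\LT_{\preceq_k}(b)$ and the whole reduced basis coincides. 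With that lemma, $U=\bigcup_k B_k$ is a universal Gr\"obner basis exactly as you intend. (One must also observe that reduced Gr\"obner bases exist and are unique in $W$ for orderings in $\NO$; this is standard for algebras of solvable type.)

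The finiteness step is also left soft. Your proposed transfer through $\Gr^\omega L$ and the commutative Gr\"obner fan does not obviously go through, precisely because of the tie-breaking hierarchy you name: a normal ordering on $\NN_0^n\times\NN_0^n$ is not captured by a single $\omega\in\Omega$, and the relationship between $\LT_\preceq(L)$ and $\LT_{\preceq'}(\Gr^\omega L)$ requires care. A cleaner, self-contained route is the topological/compactness argument from \cite{Bol1}: one puts a compact topology on $\NO$ under which $\preceq\mapsto\LT_\preceq(L)$ is locally constant (using Noetherianity to show each fibre is open), whence the image is finite. Alternatively one can mimic the Dickson--K\"onig argument from \cite{Stu}. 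Either way, the finiteness should be proved, not merely reduced to an unproved transfer. With these two repairs your overall plan (finiteness of leading-term ideals, then patching of reduced Gr\"obner bases) is the standard and correct one, and is the same strategy the cited references use.
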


\begin{proof}
See~\cite[Corollary~10.5 and Example~8.2]{Bol1} or \cite[Theorem~2.22]{Bol2}.
\end{proof}

\begin{remark}\label{indotto1}
For each $\nu\in\NN_0^{2n}$
and each ${\preceq}\in\NO$
there exists  ${\preceq_{\nu}} \in \NO$ defined by
\(
\xi^\lambda\dd^\mu\preceq_\nu\csi^\rho\dd^\sigma \Leftrightarrow
(\lambda\vec\mu)\cdot\nu<(\rho\vec\sigma)\cdot\nu
\,\vee\,((\lambda\vec\mu)\cdot\nu=(\rho\vec\sigma)\cdot\nu
\,\wedge\,(\lambda,\mu)\preceq(\rho,\sigma)).
\)
\end{remark}

% \begin{remark}\label{indotto2}
% The $K$-linear isomorphism $\Phi$ of \ref{Psi} induces a bijection $\phi$ from the
% set of normal orderings of $W$ to the set of monomial orderings of $K[X,Y]$
% such that 
% $
% X^\lambda Y^\mu \mathrel{\phi(\preceq)} X^\rho Y^\sigma \Leftrightarrow
% \Phi^{-1}(X^\lambda Y^\mu) \preceq \Phi^{-1}(X^\rho Y^\sigma).
% $
%  %for all normal orderings ${\preceq}$ of $W$
%  %and all $(\lambda,\mu)$, $(\rho,\sigma)\in\NN_0^n\times\NN_0^n$.
% Furthermore,
% if $\nu\in\NN_0^{2n}$, then $\phi(\preceq_\nu)=\phi(\preceq)_\nu$.
% \end{remark}

% \begin{remark}\label{lead-homog}
% Let $I$ be a homogeneous ideal of a $\ZZ$-graded commutative polynomial ring
% $R$ over a field.
% Let ${\preceq}$ be a monomial ordering.
% Let $x\in I$ and for each $i\in\ZZ$ let $x_i$ denote the homogeneous component of $x$
% of degree $i$.
% Then, of course, $x_i\in I$ for all $i$,
% and $\LT_{\preceq}(x)=\LT_{\preceq}(x_i)$ for some $i$
% as $\supp(x_j)\cap\supp(x_k)=\emptyset$ whenever $j\ne k$.
% \end{remark}

\begin{theorem}\label{GB->GS}
Let $\omega\in\Omega$, ${\preceq} \in \NO$, $L\subseteq W$ be a left ideal,
and  $B$ be a ${\preceq}_\omega$-Gr\"obner basis of $L$.
Then $\sigma^\omega(B)$ is a ${\preceq}$-Gr\"obner basis of $\Gr^\omega L$,
thus $\mathrm{G}^\omega L=\langle \sigma^\omega(b)\mid b\in B\rangle$
and $\LT_{\preceq}(\Gr^\omega L)=\langle \LT_{\preceq}(\sigma^\omega(b))\mid b\in B\rangle$
as ideals of $K[X,Y]$.
\end{theorem}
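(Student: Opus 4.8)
The statement to prove is Theorem~\ref{GB->GS}: for $\omega\in\Omega$, ${\preceq}\in\NO$, a left ideal $L\subseteq W$, and a ${\preceq}_\omega$-Gr\"obner basis $B$ of $L$, the set $\sigma^\omega(B)$ is a ${\preceq}$-Gr\"obner basis of $\Gr^\omega L$. The plan is to unwind the two defining conditions of a Gr\"obner basis --- that $\sigma^\omega(B)$ generates $\Gr^\omega L$ as an ideal of $K[X,Y]=\Gr^\omega W$, and that the leading terms of $\sigma^\omega(B)$ generate $\LT_\preceq(\Gr^\omega L)$ --- and verify each using the machinery already assembled, above all the comparison of $\deg^\omega$ with the ordering $\preceq_\omega$ of \ref{indotto1} and the symbol-ideal description of \ref{simbolo 1}, \ref{simbolo 2}.

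\emph{Generation.} Since $B$ generates $L$ as a left ideal, \ref{simbolo 2} (applied with $\Gr^\omega W$ commutative by \ref{omegacomm}, so that $\Gr^\omega L=(0:\Gr^\omega(W/L))$ equals $\sum_{x\in L}\Gr^\omega W\cdot\sigma^\omega(x)$) reduces the problem to showing that every symbol $\sigma^\omega(x)$, $x\in L$, lies in the ideal generated by $\sigma^\omega(B)$. Here is the heart of the argument. Fix $x\in L\setminus\{0\}$ and let ${\preceq}\in\NO$. Because $B$ is a ${\preceq}_\omega$-Gr\"obner basis, $\LT_{\preceq_\omega}(x)$ is divisible by some $\LT_{\preceq_\omega}(b)$ with $b\in B$. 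The key observation is that $\preceq_\omega$ refines comparison by $\nu$-weight with $\nu=\omega$: the $\preceq_\omega$-leading normal monomial $\xi^\lambda\dd^\mu$ of $x$ is among those monomials in the canonical form of $x$ maximizing $\omega\cdot(\lambda\vec\mu)$, i.e.\ it is a monomial of $\omega$-degree exactly $\deg^\omega(x)$; under the identification $\psi^\omega$ of \ref{iso}, the image of $\xi^\lambda\dd^\mu$ contributes to $\sigma^\omega(x)$, and in fact $\LT_\preceq(\sigma^\omega(x))=\Phi(\lt_{\preceq_\omega}(x))=\LT_{\preceq_\omega}(x)$. So a standard Gr\"obner-style division: subtract from $x$ a suitable left multiple $w\,b$ chosen so that $\sigma^\omega(w\,b)=\sigma^\omega(w)\sigma^\omega(b)$ (using multiplicativity of $\sigma^\omega$, \ref{sigma mult}) cancels the $\preceq_\omega$-leading term of $x$. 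This drops $\deg^\omega$ \emph{or} keeps $\deg^\omega$ fixed while lowering the $\preceq$-leading monomial among the top-$\omega$-degree part; by well-foundedness of $\preceq_\omega$ (which combines the $\omega$-weight with $\preceq$) the process terminates, expressing $\sigma^\omega(x)$ as a $K[X,Y]$-combination of the $\sigma^\omega(b)$. One must be a little careful that the correction terms $w\,b$ have $\omega$-degree $\le\deg^\omega(x)$ so that passing to symbols is legitimate at each stage; this is exactly what the choice of $\preceq_\omega$ guarantees.

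\emph{Leading terms.} Once generation is known, for the leading-term statement one shows $\LT_\preceq(\Gr^\omega L)=\sum_{b\in B}K[X,Y]\,\LT_\preceq(\sigma^\omega(b))$. The inclusion $\supseteq$ is immediate since each $\sigma^\omega(b)\in\Gr^\omega L$. For $\subseteq$, take $g\in\Gr^\omega L\setminus\{0\}$; by the generation just proved, $g=\sum_b h_b\,\sigma^\omega(b)$ in $K[X,Y]$, and $g$ is homogeneous for the $\omega$-grading of \ref{iso} --- we may assume the $h_b$ homogeneous too. Then $\LT_\preceq(g)$ is divisible by $\LT_\preceq(h_b\,\sigma^\omega(b))=\LT_\preceq(h_b)\LT_\preceq(\sigma^\omega(b))$ for whichever $b$ attains the $\preceq$-maximum, hence by $\LT_\preceq(\sigma^\omega(b))$, unless there is leading-term cancellation among the summands of maximal $\preceq$-degree --- and that cancellation is handled by running the same Gr\"obner division once more on $g$ itself (now inside the polynomial ring $K[X,Y]$, using that $\{\sigma^\omega(b)\}$ generates the ideal $\Gr^\omega L$), which is the classical characterization that a generating set whose leading terms generate the leading ideal is a Gr\"obner basis. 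Finally the displayed conclusions $\Gr^\omega L=\langle\sigma^\omega(b)\mid b\in B\rangle$ and $\LT_\preceq(\Gr^\omega L)=\langle\LT_\preceq(\sigma^\omega(b))\mid b\in B\rangle$ are just restatements of these two facts.

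\emph{Main obstacle.} The delicate point is the bookkeeping in the division step: keeping track of $\omega$-degrees of the partial remainders so that "take $\sigma^\omega$" commutes with the reduction, and verifying that $\LT_\preceq\circ\,\sigma^\omega$ coincides with $\LT_{\preceq_\omega}$ on $W\setminus\{0\}$ (equivalently, that the $\preceq_\omega$-leading normal monomial of $w$ survives, as its image under $\Phi\circ\psi^\omega$, as the $\preceq$-leading monomial of the symbol $\sigma^\omega(w)$). This compatibility between the two orderings and the graded identification $\psi^\omega$ is where all the content sits; everything else is the standard Gr\"obner-basis dictionary transported across $\psi^\omega$ via Remarks~\ref{sigma mult}, \ref{omegacomm} and Proposition~\ref{simbolo 2}.
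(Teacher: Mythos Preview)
The paper does not supply its own proof of \ref{GB->GS}; it simply cites \cite[Theorem~1.1.6(1)]{Sai} and \cite[Propositions~V.7.2 \& II.4.2]{Hui}. The standard argument in those sources, and the cleanest one available here, is shorter than yours and runs in the opposite order.

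Your identification of the key compatibility $\LT_{\preceq}(\sigma^\omega(w))=\LT_{\preceq_\omega}(w)$ for $w\in W\setminus\{0\}$ is correct and is the whole point. But you then prove generation first by an explicit division in $W$, and afterwards try to derive the leading-term condition from generation. That second step has a gap: when you write that leading-term cancellation ``is handled by running the same Gr\"obner division once more on $g$ itself \ldots\ using that $\{\sigma^\omega(b)\}$ generates the ideal $\Gr^\omega L$'', you are invoking a division algorithm in $K[X,Y]$ with divisors $\sigma^\omega(B)$. Division by an arbitrary generating set need not leave remainder zero; it does so precisely when the divisors form a Gr\"obner basis, which is what you are trying to prove. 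As written, the leading-term half is circular.

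The fix is to use your key identity directly and reverse the order. Every nonzero homogeneous element of $\Gr^\omega L$ equals $\sigma^\omega(x)$ for some $x\in L\setminus\{0\}$ by \ref{simbolo 1}, and since $\Gr^\omega L$ is $\omega$-graded its leading ideal is generated by leading terms of homogeneous elements; hence
\[
\LT_\preceq(\Gr^\omega L)
=\langle\LT_\preceq(\sigma^\omega(x)):x\in L\setminus\{0\}\rangle
=\langle\LT_{\preceq_\omega}(x):x\in L\setminus\{0\}\rangle
=\LT_{\preceq_\omega}(L).
\]
Since $B$ is a $\preceq_\omega$-Gr\"obner basis of $L$, this equals $\langle\LT_{\preceq_\omega}(b):b\in B\setminus\{0\}\rangle=\langle\LT_\preceq(\sigma^\omega(b)):b\in B\setminus\{0\}\rangle$. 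This is the leading-term condition; generation of $\Gr^\omega L$ by $\sigma^\omega(B)$ then follows from the standard fact in the commutative polynomial ring $K[X,Y]$ that a subset of an ideal whose leading monomials generate the leading ideal already generates the ideal. Your division argument for generation is correct but superfluous.
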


% \begin{proof}
% Let $\overline{x}\in\Gr^\omega L\wo\{0\}$ be a $\omega$-homogeneous element.
% We can find $x\in L$ with $\overline{x}=\sigma^\omega(x)$.
% Thus $\LT_{\preceq}(\overline{x})=\LT_{\preceq}(\sigma^\omega(x))=\LT_{\preceq_\omega}(x)$.
% As $\Gr^\omega L$ is $\omega$-graded, from \ref{lead-homog}
% it follows $\LT_\preceq(\Gr^\omega L)\subseteq\LT_{\preceq_\omega}(L)$.
%  %
% Conversely,  %the hypothesis about $B$ implies
% $\LT_{\preceq_\omega}(L)
% ={\langle \LT_{\preceq_\omega}(b) \mid b\in B \rangle}
% ={\langle \LT_{\leq}(\sigma^\omega(b)) \mid b\in B \rangle}
% \subseteq\LT_{\leq}(\Gr^\omega L)$.
% Hence ${\langle \LT_{\leq}(\sigma^\omega(b)) \mid b\in B \rangle}={\LT_\leq(\Gr^\omega L)}$.
%  % that is, $\sigma^\omega(B)$ is Gr\"obner basis of $\Gr^\omega L$ with respect to ${\leq}$.
% This implies ${\langle \sigma^\omega(b)\mid b\in B\rangle=\mathrm{G}^\omega L}$
% as it is well-known,
% see \cite[Corollary~2.5.6]{Cox}
% or \cite[Theorem~9.5 and Example~8.2]{Bol1}.
% \end{proof}

\begin{proof}
See \cite[Theorem~1.1.6(1)]{Sai} or \cite[Propositions~V.7.2 \& II.4.2]{Hui}.
\end{proof}

\begin{remark}\label{GB->GS 2}
Analogously as in \ref{GB->GS},
if $\nu\in\NN_0^{2n}$, ${\preceq} \in \NO$,
$I\subseteq K[X,Y]$ is an ideal,
$B$ is a ${\preceq_\nu}$-Gr\"obner basis of $I$,
then $\tau^\nu(B)$ is a $\preceq$-Gr\"obner basis of $\Gr^\nu I$.
\end{remark}

\begin{corollary}\label{GL finite}
For every left ideal $L$ of $W$
the set $\{\Gr^\omega L\mid\omega\in\Omega\}$ is finite.
Similarly, for every ideal $I$ of $K[X,Y]$
the set $\{\Gr^\nu I\mid\nu\in\NN_0^{2n}\}$ is finite.
\end{corollary}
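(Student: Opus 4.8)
The plan is to use a universal Gr\"obner basis to extract a \emph{single} finite set of elements of $L$ whose $\omega$-symbols generate $\Gr^\omega L$ for \emph{every} $\omega\in\Omega$ at once, and then to observe that the $\omega$-symbol of a fixed element of $W$ can take only finitely many values as $\omega$ ranges over $\Omega$.

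First I would fix, by Theorem~\ref{UGB exist}, a universal Gr\"obner basis $U$ of $L$, together with some normal ordering $\preceq\in\NO$ (a lexicographic one, say, cf.\ Example~\ref{lassie}). Given $\omega\in\Omega$, Remark~\ref{indotto1} furnishes the normal ordering $\preceq_\omega$, and, being universal, $U$ is in particular a $\preceq_\omega$-Gr\"obner basis of $L$. Theorem~\ref{GB->GS} then yields $\Gr^\omega L=\langle\sigma^\omega(u)\mid u\in U\rangle$ as an ideal of $K[X,Y]$ (under the tacit identification $\Gr^\omega W\cong K[X,Y]$ via $\psi^\omega$ of Theorem~\ref{iso}). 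Thus the ideal $\Gr^\omega L$ depends only on the finite tuple $(\sigma^\omega(u))_{u\in U}$, and it suffices to check that each entry ranges over a finite set.

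To see this, write the canonical form $u=\sum_{(\lambda,\mu)\in\supp(u)}c_{\lambda\mu}\,\xi^\lambda\dd^\mu$, with $\supp(u)$ finite by Remark~\ref{basis}. By Remark~\ref{sigma mult} one has $\sigma^\omega(\xi^\lambda\dd^\mu)=\sigma^\omega(\xi^\lambda)\sigma^\omega(\dd^\mu)$, which under $\psi^\omega$ is the monomial $X^\lambda Y^\mu$, homogeneous of $\omega$-degree $\omega\cdot(\lambda\vec\mu)$ by the grading in Theorem~\ref{iso}; hence
\[
\sigma^\omega(u)=\textstyle\sum\nolimits_{(\lambda,\mu)\in S_\omega(u)}c_{\lambda\mu}\,X^\lambda Y^\mu ,
\]
where $S_\omega(u)=\{(\lambda,\mu)\in\supp(u)\mid\omega\cdot(\lambda\vec\mu)=\deg^\omega(u)\}$ is the set of support points of $u$ of maximal $\omega$-degree. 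Since $S_\omega(u)$ is a subset of the fixed finite set $\supp(u)$, it assumes at most $2^{\#\supp(u)}$ values, so $\{\sigma^\omega(u)\mid\omega\in\Omega\}$ is finite; consequently $\{(\sigma^\omega(u))_{u\in U}\mid\omega\in\Omega\}$ is finite, being contained in a finite product of finite sets, and therefore so is $\{\Gr^\omega L\mid\omega\in\Omega\}$. In fact this argument already delivers the estimate $\#\{\Gr^\omega L\mid\omega\in\Omega\}\le\prod_{u\in U}2^{\#\supp(u)}$, which, after passing to the infimum over all universal Gr\"obner bases of $L$, is exactly bound~(C) of the Introduction.

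The statement for an ideal $I$ of $K[X,Y]$ is proved verbatim: such an $I$ admits a universal Gr\"obner basis by the commutative analogue of Theorem~\ref{UGB exist} (see \cite{Stu} and the remark following Theorem~\ref{iso}); Remark~\ref{GB->GS 2} replaces Theorem~\ref{GB->GS} and gives $\Gr^\nu I=\langle\tau^\nu(u)\mid u\in U\rangle$; and $\tau^\nu(u)$ is, as above, the sum of the maximal-$\nu$-degree terms of $u$, hence determined by a subset of $\supp(u)$. Since in the commutative setting one may work over all of $\NN_0^{2n}$ rather than only the polynomial region, this settles the case $\nu\in\NN_0^{2n}$ as well. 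I do not foresee any genuine obstacle: all of the content sits in the existence of universal Gr\"obner bases and their compatibility with the symbol map (Theorems~\ref{UGB exist} and~\ref{GB->GS}). The only two points deserving a word of care are the identification $\Gr^\omega W\cong K[X,Y]$ needed to regard $\Gr^\omega L$ as a subset of $K[X,Y]$, and the (immediate) observation that a universal Gr\"obner basis of $L$ is, for each $\omega$, a $\preceq_\omega$-Gr\"obner basis, which is what licenses the appeal to Theorem~\ref{GB->GS}.
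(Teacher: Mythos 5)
Your proposal is correct and follows essentially the same route as the paper: a universal Gr\"obner basis $U$ of $L$, Theorem~\ref{GB->GS} to write $\Gr^\omega L=\langle\sigma^\omega(u)\mid u\in U\rangle$, and the observation that each $\sigma^\omega(u)$ is determined by a subset of $\supp(u)$, yielding the bound $\prod_{u\in U}2^{\#\supp(u)}=\prod_{u\in U}\sum_{0\le k\le\#\supp(u)}\binom{\#\supp(u)}{k}$. You merely spell out the finiteness of $\{\sigma^\omega(u)\mid\omega\in\Omega\}$ in more detail than the paper, which leaves that step implicit.
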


\begin{proof}
By \ref{UGB exist}, we can find a universal Gr\"obner basis $U\supseteq\{0\}$ of $L$.
By \ref{GB->GS}, $\Gr^\omega L=\langle \sigma^\omega(u)\mid u\in U\rangle$.
So $\smash{\#\{\Gr^\omega L\mid\omega\in\Omega\}\le \prod_{u\in U}\sum_{0\le k\le \#\supp(u)}
 %\left(\begin{smallmatrix}\#\supp(u) \\ k\end{smallmatrix}\right)
\binom{\#\supp(u)}{k}}
<\infty$.
 % because $U$ is finite.
\end{proof}

% \begin{remark}
% Similarly, $\#\{\Gr^\nu I\mid\nu\in\NN_0^{2n}\}<\infty$ for each ideal $I$ of $K[X,Y]$.
% \end{remark}

\begin{remark}
Another proof of \ref{GL finite} by homogenization is in \cite[Theorem~3.6]{ACG}.
\end{remark}

\section{Characteristic Varieties over Weyl Algebras}\label{sect:tatatataa}\label{sec4}

\noindent
We encounter the notion of characteristic variety and critical cone
and prove our main result,
from which a relation between characteristic varieties and critical cones follows.
We keep the notation of the previous section.

\begin{remark}\label{papparapa}
Fix any $\omega\in\Omega$.
By \ref{iso}, $\mathrm{G}^\omega W\cong K[X,Y]$ as $K\text{-}$algebras.
Let~$M$ be finitely generated left $W$-module.
By \ref{buona} we can provide $M$  with a good ${\omega}\text{-}$filtration ${\Fi^\omega M}$.
By \ref{good => fg}
the $K[X,Y]$-module $\mathrm{G}^\omega M$ is
finitely generated,
and by \ref{omegacomm}, \ref{equivalente}, \ref{radicale} the
ideal $\rad{(0 \mathop{:} \mathrm{G}^\omega M)}$ 
of $K[X,Y]$ is independent of the choice of~${\Fi^\omega M}$.
\end{remark}

\begin{definition}\label{titu}
Let $\omega\in\Omega$ and let $M$ be a finitely generated left $W$-module.
By \ref{papparapa} we may define the
\concept{$\omega\text{-}$characteristic variety $\mathcal{V}^\omega(M)$ of $M$}
as the closed set 
% $\Spec(K[X,Y]\mod\rad{(0 \mathop{:} \mathrm{G}^\omega M)})
% \cong
$\Var(\rad{(0 \mathop{:} \mathrm{G}^\omega M)}) 
=\Var(0 \mathop{:} \mathrm{G}^\omega M)$ 
of 
$\Spec(K[X,Y])$.
In particular %for all $q\in\NN$
we consider
$\mathcal{V}^{(1\vec 1)}(M)$
and 
$\mathcal{V}^{(0\vec 1)}(M)$,
%which we call
the characteristic variety of $M$ \concept{by degree}
and
\concept{by order}.
%respectively.

We define the \concept{$\omega$-critical cone $\mathcal{C}^{\omega}(M)$ of $M$}
as  $\Var(\mathrm{G}^{(1\vec 1)}\rad{(0 : \mathrm{G}^{\omega} M)})$,
which is equal to $\Var(\mathrm{G}^{(1\vec 1)}{(0 : \mathrm{G}^{\omega} M)})$
and $\Var(0 : \mathrm{G}^{(1\vec 1)}\mathrm{G}^{\omega} M)$ by
and \ref{GG2} and \ref{annullatore 2},
a closed set of $\Spec(K[X,Y])$.
In particular we consider 
$\mathcal{C}^{(1 \vec 1)}(M)$
and
$\mathcal{C}^{(0 \vec 1)}(M)$,
the critical cone of $M$ \concept{by degree}
and
\concept{by order}.
%respectively.
\end{definition}

\begin{remark}
Let $M$ be a finitely generated left $W$-module and $N$ be a submodule of $M$.
Provided $M$ with a good filtration, %see~\ref{buona},
by \ref{iso} and  by \ref{subfilt} and \ref{quotfilt}
the  induced $\omega$-filtrations of $N$ and $M\mod N$ are good.
Therefore what said in~\ref{papparapa} and~\ref{titu} applies also to $N$ and~$M\mod N$.
\end{remark}

\begin{theorem}\label{qui quo qua}
Given any finitely generated left $W$-module $M$,
there are only finitely many distinct characteristic varieties
$\mathcal{V}^\omega(M)$ for $\omega$ varying in $\Omega$.
\end{theorem}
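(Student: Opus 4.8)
The plan is to reduce to the cyclic case, where the assertion is essentially Corollary~\ref{GL finite}, and to carry out that reduction along a short filtration of $M$ by submodules with cyclic successive quotients. The mechanism making this work is an additivity property: for every short exact sequence $0\to N\to M\to P\to 0$ of finitely generated left $W$-modules and every $\omega\in\Omega$, one should have $\mathcal{V}^\omega(M)=\mathcal{V}^\omega(N)\cup\mathcal{V}^\omega(P)$, equivalently $\rad{(0:\mathrm{G}^\omega M)}=\rad{(0:\mathrm{G}^\omega N)}\cap\rad{(0:\mathrm{G}^\omega P)}$ in $K[X,Y]$.

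To establish this additivity I would fix a good $\omega$-filtration of $M$; since $\mathrm{G}^\omega W\cong K[X,Y]$ is noetherian by~\ref{iso}, the $\omega$-filtrations induced on $N$ and on $P\cong M\mod N$ are good by~\ref{subfilt} and~\ref{quotfilt}, and~\ref{sequenza esatta} supplies an exact sequence $\mathrm{G}^\omega N\mono\mathrm{G}^\omega M\epi\mathrm{G}^\omega P$ with $\mathrm{G}^\omega P\cong\mathrm{G}^\omega M\mod\mathrm{G}^\omega N$. By~\ref{papparapa}, applied also to $N$ and $P$, none of the three radical annihilators depends on the good filtration chosen. Now $(0:\mathrm{G}^\omega M)\subseteq(0:\mathrm{G}^\omega N)\cap(0:\mathrm{G}^\omega P)$ since $(0:\mathrm{G}^\omega M)$ kills both the submodule $\mathrm{G}^\omega N$ and the quotient $\mathrm{G}^\omega P$; conversely, if $a$ kills both, then $a\,\mathrm{G}^\omega M\subseteq\mathrm{G}^\omega N$, hence $a^2\mathrm{G}^\omega M\subseteq a\,\mathrm{G}^\omega N=0$, so $a\in\rad{(0:\mathrm{G}^\omega M)}$. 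Passing to radicals gives the equality of ideals, hence of varieties. (This is the trick of~\ref{la2}, moved from annihilator ideals of a filtered ring to annihilators of filtered $W$-modules.)

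For the cyclic base case $M=W\mod L$ with $L$ a left ideal of $W$, I would equip $W$ with its standard good $\omega$-filtration and $M$ with the induced one (good by~\ref{quotfilt}); then $\mathrm{G}^\omega M\cong K[X,Y]\mod\mathrm{G}^\omega L$, and since $\Fi^\omega W$ is commutative by~\ref{omegacomm}, \ref{annullatore} gives $(0:\mathrm{G}^\omega M)=\mathrm{G}^\omega L$. Corollary~\ref{GL finite} says the family $\{\mathrm{G}^\omega L\mid\omega\in\Omega\}$ is finite, so $\{\mathcal{V}^\omega(W\mod L)\mid\omega\in\Omega\}$ is finite too.

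The theorem then follows by induction on the number $t$ of generators of $M$: for $t>1$, pick generators $m_1,\dots,m_t$, put $N=Wm_1+\dots+Wm_{t-1}$, so that $M\mod N$ is cyclic; the inductive hypothesis makes $\{\mathcal{V}^\omega(N)\mid\omega\in\Omega\}$ finite, the base case makes $\{\mathcal{V}^\omega(M\mod N)\mid\omega\in\Omega\}$ finite, and the additivity formula writes each $\mathcal{V}^\omega(M)$ as a union of one member of the first finite family with one of the second, so $\{\mathcal{V}^\omega(M)\mid\omega\in\Omega\}$ is finite. I do not expect a genuinely hard step here: the entire content sits inside Corollary~\ref{GL finite} and thus in the existence of universal Gr\"obner bases; the only things requiring care are that the induced filtrations on $N$ and $M\mod N$ stay good (so that the comparison of annihilators and the graded exact sequence are legitimate) and that the radical annihilators are independent of the good filtration, both of which are supplied by results already in hand.
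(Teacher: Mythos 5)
Your proof is correct and follows exactly the paper's route: reduce by induction on the number of generators to the cyclic case (handled by Corollary~\ref{GL finite} together with \ref{annullatore}), using the additivity $\mathcal{V}^\omega(M)=\mathcal{V}^\omega(N)\cup\mathcal{V}^\omega(M\mod N)$ from the graded exact sequence of~\ref{sequenza esatta}. You merely spell out the annihilator-intersection argument for that additivity, which the paper leaves implicit.
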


\begin{proof}
Given a submodule $N$ of $M$,
by \ref{sequenza esatta} one has
$\mathcal{V}^\omega(M)=\mathcal{V}^\omega(N)\cup\mathcal{V}^\omega(M\mod N)$
for all $\omega\in\Omega$.
%By an obvious inductive argument,
By induction over the number of generators of $M$,
the claim follows from \ref{GL finite} and \ref{annullatore}.
\end{proof}

\noindent

\begin{lemma}\label{orazio}
Let $w\in W$, $\nu\in\NN_0^{2n}$,  $\omega\in\Omega$.
Let
$l\in\NN_0$ with $l\ge\deg^{\nu}(w)$ in~$W$,
let
$m\in\NN_0$ with $m\ge\deg^{\omega}(w)$ in~$W$,
let
$p\in\NN_0$ with $p\ge\deg^{\nu}(\sigma_{m}^{\omega}(w))$ in $K[X,Y]$.
Then in $K[X,Y]$
for all $s\in\NN$ such that
$s>l-p$
it holds
\(
\smash{\tau^{\nu}_{p}(\sigma^{\omega}_{m}(w))=\sigma^{\sw}_{p+sm}(w)}.
\)
\end{lemma}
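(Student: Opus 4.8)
Let me recall the setup. We have $w \in W$, a "slow" weight $\nu \in \NN_0^{2n}$, a "fast" weight $\omega \in \Omega$, and a large mixing parameter $s$. On one side we take the $\omega$-symbol of $w$ in $\Gr^\omega W \cong K[X,Y]$, forced into degree $m \ge \deg^\omega(w)$, then take its $\nu$-symbol in $\Gr^\nu K[X,Y] \cong K[X,Y]$, forced into degree $p$. On the other side we take the $(\nu+s\omega)$-symbol of $w$ directly, in degree $p + sm$. The claim is these coincide once $s > l - p$, where $l \ge \deg^\nu(w)$.

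My plan is to reduce everything to the canonical basis $\{\xi^\lambda\dd^\mu\}$ of $W$ and track exponent vectors. Write $w = \sum_{(\lambda,\mu)\in\supp(w)} c_{\lambda\mu}\,\xi^\lambda\dd^\mu$ in canonical form. For a normal monomial indexed by $(\lambda,\mu)$, its $\omega$-degree is $\omega\cdot(\lambda\vec\mu)$, its $\nu$-degree is $\nu\cdot(\lambda\vec\mu)$, and its $(\nu+s\omega)$-degree is $(\nu+s\omega)\cdot(\lambda\vec\mu) = \nu\cdot(\lambda\vec\mu) + s\,\omega\cdot(\lambda\vec\mu)$. The key combinatorial fact is: among all $(\lambda,\mu) \in \supp(w)$, the ones maximizing $(\nu+s\omega)\cdot(\lambda\vec\mu)$ for $s$ large are exactly those that first maximize $\omega\cdot(\lambda\vec\mu)$ and then, among those, maximize $\nu\cdot(\lambda\vec\mu)$ — this is the same lexicographic tie-break that underlies Remark \ref{indotto1} (the ordering $\preceq_\nu$). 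I would make this precise by showing that if $(\lambda,\mu)$ has $\omega\cdot(\lambda\vec\mu) < m$ (strictly below the top $\omega$-degree), then for $s > l - p$ its $(\nu+s\omega)$-weight is strictly less than $p + sm$; indeed $\nu\cdot(\lambda\vec\mu) + s\,\omega\cdot(\lambda\vec\mu) \le l + s(m-1) = (l - s) + sm < p + sm$ using $s > l-p$. So only monomials of top $\omega$-degree $m$ survive into the $(\nu+s\omega)$-symbol at level $p+sm$.

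Now pass to $\sigma^\omega_m(w)$: by definition this is $\sum c_{\lambda\mu}\,\sigma^\omega(\xi^\lambda\dd^\mu)$ summed over those $(\lambda,\mu)\in\supp(w)$ with $\omega\cdot(\lambda\vec\mu)=m$ (with the identification $\psi^\omega$, this is the polynomial $\sum c_{\lambda\mu} X^\lambda Y^\mu$ over that index set), and it has $\nu$-degree $\le p$. Its $\nu$-symbol $\tau^\nu_p(\sigma^\omega_m(w))$ in degree $p$ picks out the sub-sum over $(\lambda,\mu)$ with both $\omega\cdot(\lambda\vec\mu)=m$ and $\nu\cdot(\lambda\vec\mu)=p$. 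By the previous paragraph, that is exactly the same index set that contributes to $\sigma^{\nu+s\omega}_{p+sm}(w)$. The remaining task is bookkeeping: check that under the canonical identifications $K[X,Y]\cong\Gr^\omega W$, $K[X,Y]\cong\Gr^\nu K[X,Y]$, and $K[X,Y]\cong\Gr^{\nu+s\omega}W$ (all via the respective $\psi$'s and $\tau$/$\sigma$ conventions), the monomial $X^\lambda Y^\mu$ with the stated bidegree corresponds on both sides to $c_{\lambda\mu}$ times the class of $\xi^\lambda\dd^\mu$, so the two sums literally agree coefficient by coefficient. One should also handle the degenerate case $w = 0$ (both sides are $0$) separately, and note that if the surviving index set is empty the common value is $0$ — consistent with $p$ possibly overestimating $\deg^\nu(\sigma^\omega_m(w))$.

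**Main obstacle.** The conceptual content is the lexicographic-domination argument in the second paragraph, and that is genuinely short. The real friction is purely notational: the statement involves three different symbol-map conventions ($\sigma^\omega$, $\tau^\nu$, $\sigma^{\nu+s\omega}$) living in three copies of $K[X,Y]$ glued by three different graded isomorphisms, plus the freedom that $l, m, p$ are merely upper bounds rather than exact degrees, so one cannot simply invoke multiplicativity of $\sigma$ naively. I expect the bulk of the write-up to be carefully unwinding these identifications — in particular verifying that $\sigma^\omega_m$ applied to a normal monomial of $\omega$-degree exactly $m$ lands on the corresponding monomial of $K[X,Y]$, and that forming $\tau^\nu_p$ of a (possibly inhomogeneous-looking) element commutes with extracting the appropriate bigraded piece — so that the two explicit sums can be compared term by term. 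There is no deep obstacle; the care needed is in not conflating "degree $\le$" with "degree $=$" at any step.
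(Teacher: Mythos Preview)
Your proposal is correct and follows essentially the same route as the paper's own proof: expand $w$ in the canonical basis, compute $\tau^\nu_p(\sigma^\omega_m(w))$ as the sum over $(\lambda,\mu)\in\supp(w)$ with $\omega\cdot(\lambda\vec\mu)=m$ and $\nu\cdot(\lambda\vec\mu)=p$, and then use exactly your inequality $\nu\cdot(\lambda\vec\mu)+s\,\omega\cdot(\lambda\vec\mu)\le l+s(m-1)<p+sm$ (for $s>l-p$) to show the same index set governs $\sigma^{\nu+s\omega}_{p+sm}(w)$. The paper organizes this as a three-case split on $(\lambda,\mu)$ and does not dwell on the identifications you flag as the ``main obstacle''; those identifications are indeed routine here since all symbol maps send $\xi^\lambda\dd^\mu$ (resp.\ $X^\lambda Y^\mu$) of the correct degree to $X^\lambda Y^\mu$.
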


\begin{proof}
We write $w$ in canonical form as
$\sum_{(\lambda,\mu)\in\mathbb{S}}c_{\lambda\mu}\xi^\lambda \dd^\mu$,
where $\mathbb{S}=\supp(w)$
and $c_{\lambda\mu}\in {K\wo\{0\}}$.
%for all $(\lambda,\mu)\in\mathbb{S}$.
By definition, we have
$\omega\cdot(\lambda\vec\mu)\le m$ for all $(\lambda,\mu)\in\mathbb{S}$.
Hence
$\sigma^{\omega}_{m}(w)=\sum_{(\lambda,\mu)\in\mathbb{S}_m}c_{\lambda\mu}X^\lambda Y^\mu$,
where $\mathbb{S}_m={\{(\lambda,\mu)\in\mathbb{S}\mid\omega\cdot(\lambda\vec\mu)=m\}}$.
Similarly,
$\nu\cdot(\lambda\vec\mu)\le p$ for all $(\lambda,\mu)\in\mathbb{S}_{m}$.
Hence
$\tau^{\nu}_{p}(\sigma^{\omega}_{m}(w))
=\sum_{(\lambda,\mu)\in\mathbb{S}_{m,p}}c_{\lambda\mu}X^\lambda Y^\mu$,
where $\mathbb{S}_{m,p}={\{(\lambda,\mu)\in\mathbb{S}_m\mid\nu\cdot(\lambda\vec\mu)=p\}}$.

%Now, if $w=0$, then the statement is trivial.
%We may assume that $w\ne 0$, so that $\mathbb{S}\ne\emptyset$.
Let $(\lambda,\mu)\in\mathbb{S}$.
As just observed,
$\omega\cdot(\lambda\vec\mu)\le m$,
and moreover
if $\omega\cdot(\lambda\vec\mu)=m$,
then $\nu\cdot(\lambda\vec\mu)\le p$.
Thus we have the following three cases.

If $\omega\cdot(\lambda\vec\mu)=m$ and $\nu\cdot(\lambda\vec\mu)=p$, then
$(\sw)\cdot(\lambda\vec\mu)=\nu\cdot(\lambda\vec\mu)+s\omega\cdot(\lambda\vec\mu)=p+sm$,
hence $\xi^\lambda\dd^\mu\in\Fi^{\sw}_{p+sm}W\wo\Fi^{\sw}_{p+sm-1}W$ 
for all $s\in\NN$.

If $\omega\cdot(\lambda\vec\mu)=m$ and $\nu\cdot(\lambda\vec\mu)<p$, then
$(\sw)\cdot(\lambda\vec\mu)=\nu\cdot(\lambda\vec\mu)+s\omega\cdot(\lambda\vec\mu)<p+sm$,
hence $\xi^\lambda\dd^\mu\in\Fi^{\sw}_{p+sm-1}W$ for all $s\in\NN$.

If $\omega\cdot(\lambda\vec\mu)<m$, then
$(\sw)\cdot(\lambda\vec\mu)=\nu\cdot(\lambda\vec\mu)+s\omega\cdot(\lambda\vec\mu)
\le l+sm-s
<p+sm$
as soon as $s>l-p$,
hence $\xi^\lambda\dd^\mu\in\Fi^{\sw}_{p+sm-1}W$ for all $s\in\NN$ with $s>l-p$.

Therefore, putting $\mathbb{S}'_{m,p}={\{(\lambda,\mu)\in\mathbb{S} \mid
\omega\cdot(\lambda\vec\mu)=m,\;\nu\cdot(\lambda\vec\mu)=p\}}$,
we obtain
$\smash{\sigma^{\sw}_{p+sm}(w)}
=\smash{\sum_{(\lambda,\mu)\in\mathbb{S}'_{m,p}} c_{\lambda\mu} X^\lambda Y^\mu}$
for all $s\in\NN$ with $s>l-p$.
Since $\mathbb{S}_{m,p}=\mathbb{S}'_{m,p}$, we are done.
\end{proof}

\begin{lemma}\label{pluto}
Let $w\in W$,  and let $\nu\in\NN_0^{2n}$ and $\omega\in\Omega$.
Then for all $s\in\NN$ such that 
%$s>\deg^{\nu}(w)-\sup{}\{\deg^{\nu}(\sigma^\omega(w)),0\}$
$s>\deg^{\nu}(w)-\deg^{\nu}(\sigma^\omega(w))$
it holds
$\smash{\deg^{\nu}(\sigma^\omega(w))+s\deg^{\omega}(w)=\deg^{\sw}(w)}$.
\end{lemma}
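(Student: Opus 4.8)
The plan is to deduce Lemma~\ref{pluto} directly from Lemma~\ref{orazio} by choosing the free parameters $l$, $m$, $p$ there to be the actual degrees in question. Concretely, set $l=\deg^{\nu}(w)$, $m=\deg^{\omega}(w)$, and $p=\deg^{\nu}(\sigma^{\omega}_{m}(w))=\deg^{\nu}(\sigma^{\omega}(w))$; these are legitimate choices since each is $\ge$ the corresponding degree (in fact equal), provided $w\ne 0$. The case $w=0$ is trivial: all the degrees involved are $-\infty$ and the claimed identity reads $-\infty=-\infty$, so I would dispose of it in one line at the start.

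With those choices, Lemma~\ref{orazio} gives, for every $s\in\NN$ with $s>l-p=\deg^{\nu}(w)-\deg^{\nu}(\sigma^{\omega}(w))$, the equality $\tau^{\nu}_{p}(\sigma^{\omega}_{m}(w))=\sigma^{\sw}_{p+sm}(w)$ in $K[X,Y]$. The key observation is then that $\tau^{\nu}_{p}(\sigma^{\omega}_{m}(w))$ is a \emph{nonzero} homogeneous element: by the very choice $p=\deg^{\nu}(\sigma^{\omega}(w))$, the element $\tau^{\nu}_{p}(\sigma^{\omega}(w))=\tau^{\nu}(\sigma^{\omega}(w))$ is the $\tau^\nu$-symbol of the nonzero polynomial $\sigma^{\omega}(w)$, hence nonzero in $\Gr^{\nu}K[X,Y]$. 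Therefore $\sigma^{\sw}_{p+sm}(w)\ne 0$, which by the definition of the symbol map $\sigma^{\sw}$ forces $\deg^{\sw}(w)=p+sm=\deg^{\nu}(\sigma^{\omega}(w))+s\deg^{\omega}(w)$. That is exactly the assertion of the lemma.

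I should double-check the bookkeeping between the two symbol-map notations: in Lemma~\ref{orazio} the right-hand side lives in $W$ filtered by $\sw$, while $\tau^{\nu}_{p}$ and $\sigma^{\omega}_{m}$ are the graded-piece symbol maps for $K[X,Y]$ and for $W$ respectively, under the identifications $\Gr^{\omega}W\cong K[X,Y]$ and $\Gr^{\nu}K[X,Y]\cong K[X,Y]$ from \ref{iso}. So "$\sigma^{\omega}_{m}(w)$ nonzero'' is equivalent to $m=\deg^{\omega}(w)$, which holds by our choice, and likewise "$\tau^{\nu}_{p}(\cdot)$ nonzero'' is equivalent to $p$ being the exact $\nu$-degree of $\sigma^{\omega}(w)$. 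These are routine consistency checks rather than genuine difficulties.

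The only mildly delicate point — and the one I expect to be the main obstacle — is making sure the strict inequality $s>\deg^{\nu}(w)-\deg^{\nu}(\sigma^{\omega}(w))$ is precisely the hypothesis one needs, i.e.\ that it coincides with the bound $s>l-p$ coming from Lemma~\ref{orazio} under the chosen values of $l$ and $p$. Since $l-p=\deg^{\nu}(w)-\deg^{\nu}(\sigma^{\omega}(w))$ verbatim, this matches on the nose; there is no loss and no off-by-one issue. Thus the proof is essentially a careful specialization of the previous lemma together with the remark that the symbol of a nonzero element is nonzero, which pins down the degree.
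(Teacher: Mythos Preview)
Your proof is correct and follows essentially the same route as the paper: specialize $l,m,p$ in Lemma~\ref{orazio} to the exact degrees, observe that $\tau^{\nu}_{p}(\sigma^{\omega}_{m}(w))\ne 0$ since $p$ is the actual $\nu$-degree of $\sigma^{\omega}(w)$, and conclude $\deg^{\sw}(w)=p+sm$. The only cosmetic difference is that the paper re-derives the inequality $\deg^{\sw}(w)\le p+sm$ by pointing back into the case analysis of the proof of Lemma~\ref{orazio}, whereas you (legitimately) treat it as implicit in the well-definedness of $\sigma^{\sw}_{p+sm}(w)$ in the conclusion of that lemma.
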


\begin{proof}
If $w=0$, then the statement holds for all $s\in\NN$.
Hence let $w\ne 0$, and
put $l=\deg^{\nu}(w)$, $m=\deg^{\omega}(w)$ and $\smash{p=\deg^{\nu}(\sigma^\omega_m(w))}$.
Let $s\in\NN$ with $s>l-p$ and put $\smash{d=\deg^{\sw}(w)}$.
As in the proof of \ref{orazio} we obtain
$(\sw)\cdot(\lambda\vec\mu)\le p+sm$ for all $(\lambda,\mu)\in\supp(w)$,
hence $d=\sup{}\{(\sw)\cdot(\lambda\vec\mu)\mid (\lambda,\mu)\in\supp(w)\}\le p+sm$.
If it held $d<p+sm$, then we would have
$\smash{\sigma^{\sw}_{p+sm}(w)=0}$,
%but on the other hand
whereas
$\smash{\tau^{\nu}_{p}(\sigma^{\omega}_{m}(w))\ne 0}$,
in contradiction to \ref{orazio}.
Hence $p+sm=d$, our claim.
\end{proof}

\begin{lemma}\label{lilly}
Let $w\in W$, and let $\nu\in\NN_0^{2n}$ and $\omega\in\Omega$.
Then
for all $s\in\NN$ such that
$s>\deg^{\nu}(w)-\deg^{\nu}(\sigma^{\omega}(w))$
it holds
$\smash{\tau^{\nu}(\sigma^{\omega}(w))=\sigma^{\sw}(w)}$.
\end{lemma}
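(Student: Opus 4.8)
The plan is to deduce \ref{lilly} directly from \ref{orazio} and \ref{pluto} by unwinding, at a fixed degree, the relation between the graded symbol maps $\sigma^\bullet_i$, $\tau^\nu_i$ and the total symbol maps $\sigma^\bullet$, $\tau^\nu$. First I would dispose of the degenerate case $w=0$: here $\sigma^\omega(0)=0$ and then $\tau^\nu(0)=0=\sigma^{\sw}(0)$, so the asserted identity holds for every $s\in\NN$, the hypothesis being vacuous.

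So assume $w\ne 0$ and set $l=\deg^{\nu}(w)$, $m=\deg^{\omega}(w)$, $p=\deg^{\nu}(\sigma^{\omega}_{m}(w))$, exactly the quantities appearing in the proofs of \ref{orazio} and \ref{pluto}. Since $m=\deg^{\omega}(w)$ we have $\sigma^{\omega}_{m}(w)=\sigma^{\omega}(w)$; as $w\ne 0$ this is a nonzero element of $K[X,Y]$, so $p\in\NN_0$ and $p=\deg^{\nu}(\sigma^{\omega}(w))$. In particular the hypothesis $s>\deg^{\nu}(w)-\deg^{\nu}(\sigma^{\omega}(w))$ of \ref{lilly} is literally the hypothesis $s>l-p$ of \ref{orazio} and of \ref{pluto}.

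Next I would apply \ref{orazio} with these particular choices of $l,m,p$ — the three defining inequalities all being equalities here — to get $\tau^{\nu}_{p}(\sigma^{\omega}_{m}(w))=\sigma^{\sw}_{p+sm}(w)$ for every $s>l-p$. On the left, since $p=\deg^{\nu}(\sigma^{\omega}(w))$, the degree-$p$ symbol of $\sigma^{\omega}_{m}(w)=\sigma^{\omega}(w)$ is by definition its total symbol, i.e.\ $\tau^{\nu}_{p}(\sigma^{\omega}_{m}(w))=\tau^{\nu}(\sigma^{\omega}(w))$. On the right, \ref{pluto} gives $p+sm=\deg^{\sw}(w)$ for the same range $s>l-p$, whence $\sigma^{\sw}_{p+sm}(w)=\sigma^{\sw}(w)$. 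Chaining the three equalities yields $\tau^{\nu}(\sigma^{\omega}(w))=\sigma^{\sw}(w)$ for all $s>l-p$, which is the claim.

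I do not expect a genuine obstacle: the combinatorial core — tracking which monomials of $w$ survive the passage $\Gr^{\omega}$ then $\Gr^{\nu}$ versus the single passage $\Gr^{\sw}$, and checking that no degree drop occurs once $s$ is large — has already been carried out in \ref{orazio} and \ref{pluto}. The only points needing a little care are the bookkeeping between the indexed and unindexed symbol maps (which rests on knowing the \emph{exact} degrees $p$ and $p+sm$: the former by the definition of $p$, the latter by \ref{pluto}) and the trivial case $w=0$; both are routine.
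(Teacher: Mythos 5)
Your proof is correct and is exactly the argument the paper has in mind: the paper's own proof is just the terse instruction to apply \ref{orazio} with $l=\deg^{\nu}(w)$, $m=\deg^{\omega}(w)$, $p=\deg^{\nu}(\sigma^{\omega}_{m}(w))=\deg^{\nu}(\sigma^{\omega}(w))$ together with \ref{pluto}, which is precisely the chain you spell out. You also cover the degenerate case $w=0$ explicitly, a small point the paper leaves implicit.
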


\begin{proof}
By \ref{orazio} with
$l=\deg^{\nu}(w)$,
$m=\deg^{\omega}(w)$,
$p=\deg^{\nu}(\sigma^{\omega}_{m}(w))=\deg^{\nu}(\sigma^{\omega}(w))$,
and by \ref{pluto}.
\end{proof}

\noindent
Theorem~\ref{athena} extends a result published in 1971 by Bernstein
as a part of the proof of~\cite[Theorem~3.1]{Ber},
namely that $\smash{\Gr^{(1\vec 1)}\Gr^{(0\vec 1)}L\subseteq\Gr^{(1\vec s)}L}$ for $s\gg 0$.
In greater generality  we prove also the converse inclusion.
% ; % in greater generality.
% this required us a more sophisticated machinery,
% namely universal Gr\"obner bases in Weyl algebras.

\begin{theorem}\label{athena}
Let $L$ be a left ideal of $W$.
For all $\nu\in\NN_0^{2n}$ there exists $s_\nu\in\NN_0$ such that
for all $\omega\in\Omega$ and all $s\in\NN$ with $s>s_\nu$ it holds
\(
\Gr^{\nu}\Gr^\omega L = \Gr^{\sw}L
\)
as ideals of $K[X,Y]$.
\end{theorem}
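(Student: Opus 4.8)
The plan is to reduce everything to a single finite subset of $L$ that controls all three associated graded ideals at once, namely a universal Gr\"obner basis. If $L=0$ the claim is trivial, so assume $L\ne 0$ and fix, by Theorem~\ref{UGB exist}, a universal Gr\"obner basis $U$ of $L$ with $0\notin U$. Define
\[
s_\nu=\max_{u\in U}\deg^\nu(u)\in\NN_0,
\]
which depends only on $L$ and $\nu$, and crucially not on $\omega$.

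First I would express each of the three ideals in terms of $U$. Since $U$ is universal, it is a $\preceq_\omega$-Gr\"obner basis of $L$ for every $\preceq\in\NO$, so Theorem~\ref{GB->GS} gives $\Gr^\omega L=\langle\sigma^\omega(u)\mid u\in U\rangle$ and, more than that, shows that $\sigma^\omega(U)$ is a $\preceq$-Gr\"obner basis of the ideal $\Gr^\omega L\subseteq K[X,Y]$ for every $\preceq\in\NO$. In particular $\sigma^\omega(U)$ is a $\preceq_\nu$-Gr\"obner basis of $\Gr^\omega L$, so Remark~\ref{GB->GS 2} yields $\Gr^\nu\Gr^\omega L=\langle\tau^\nu(\sigma^\omega(u))\mid u\in U\rangle$. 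Finally, for any $s\in\NN$ we have $\nu+s\omega\in\Omega$ because $\omega\in\Omega$, and $U$, being universal, is a $\preceq_{\sw}$-Gr\"obner basis of $L$, so Theorem~\ref{GB->GS} gives $\Gr^{\sw}L=\langle\sigma^{\sw}(u)\mid u\in U\rangle$.

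It then remains to identify the two generating families. Fix $u\in U$, $\omega\in\Omega$ and $s\in\NN$ with $s>s_\nu$. Since $u\ne 0$, the symbol $\sigma^\omega(u)$ is a nonzero element of $K[X,Y]$, whence $\deg^\nu(\sigma^\omega(u))\ge 0$ and therefore
\[
\deg^\nu(u)-\deg^\nu(\sigma^\omega(u))\le\deg^\nu(u)\le s_\nu<s.
\]
Thus the hypothesis of Lemma~\ref{lilly} is met, and it gives $\tau^\nu(\sigma^\omega(u))=\sigma^{\sw}(u)$. As this holds for every $u\in U$, the two generating families coincide, so $\Gr^\nu\Gr^\omega L=\Gr^{\sw}L$, as claimed.

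The one step deserving attention is the uniformity of the bound in $\omega$: the threshold produced by Lemma~\ref{lilly} depends a priori on $\omega$ through $\deg^\nu(\sigma^\omega(u))$, but that term is nonnegative, so it may be discarded and the $\omega$-free number $\max_{u\in U}\deg^\nu(u)$ suffices for all $\omega$ simultaneously. Everything else is routine bookkeeping with universal Gr\"obner bases and the passage lemmas \ref{GB->GS}, \ref{GB->GS 2}, \ref{lilly}.
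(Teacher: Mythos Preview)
Your proof is correct and follows essentially the same route as the paper's: choose a universal Gr\"obner basis $U$, set $s_\nu=\max_{u\in U}\deg^\nu(u)$, use Theorems~\ref{GB->GS} and~\ref{GB->GS 2} to obtain generating sets $\tau^\nu(\sigma^\omega(U))$ and $\sigma^{\sw}(U)$ for the two ideals, and invoke Lemma~\ref{lilly} to identify them term by term. The only cosmetic difference is that the paper applies \ref{GB->GS} with the specific ordering $({\preceq_\nu})_\omega$, whereas you first observe that $\sigma^\omega(U)$ is in fact a universal Gr\"obner basis of $\Gr^\omega L$ and then specialize to $\preceq_\nu$; both arguments are equally short.
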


\begin{proof}
Let $\nu\in\NN_0^{2n}$.
We can choose a universal Gr\"obner basis $U$ of $L$ by \ref{UGB exist},
and we can fix an normal ordering ${\preceq} \in \NO$ by \ref{lassie}.
Thus
$U$ is a $\smash{({\preceq_{\nu}})_{\omega}}$-Gr\"obner basis of $L$ for all $\omega\in\Omega$,
see \ref{indotto1}.
% Let denote $\phi({\preceq})$ by ${\leq}$,
%  %the monomial ordering of $K[X,Y]$ induced by ${\preceq}$ through $\Phi$,
% so that $\phi({\preceq_{\nu}})$ equals ${\leq_{\nu}}$,
% see \ref{indotto2}.

By \ref{GB->GS},
$\sigma^\omega(U)$ is a ${\preceq_{\nu}}$-Gr\"obner basis of $\Gr^\omega L$
for all $\omega\in\Omega$.
Hence, by \ref{GB->GS 2},
$\tau^{\nu}(\sigma^\omega(U))$ is a ${\preceq}$-Gr\"obner basis of $\Gr^{\nu}\Gr^\omega L$
for all $\omega\in\Omega$.
In particular,
$\Gr^{\nu}\Gr^\omega L
=\langle \tau^{\nu}(\sigma^\omega(u))\mid u\in U\rangle$
for all $\omega\in\Omega$.
Putting $s_\nu={\max{}\{\deg^{\nu}(u)\mid u\in U,\,u\ne 0\}}$ if $U\nsubseteq \{0\}$,
and $s_\nu=0$ if $U\subseteq\{0\}$, by \ref{lilly} we get
$\Gr^{\nu}\Gr^\omega L={\langle \sigma^{\sw}(u)\mid u\in U\rangle}$
for all $\omega\in\Omega$ and all $s\in\NN$ with $s>s_\nu$.

On the other hand, $U$ is a Gr\"obner basis of $L$ with respect to ${\preceq_{\sw}}$
for all $\omega\in\Omega$ and all $s\in\NN$.
Therefore, by \ref{GB->GS}, $\sigma^{\sw}(U)$ is a Gr\"obner basis of $\Gr^{\sw}L$ with respect
to~${\preceq}$, whence
${\langle \sigma^{\sw}(u)\mid u\in U\rangle=\Gr^{\sw}L}$,
for all $\omega\in\Omega$ and all $s\in\NN$.
\end{proof}

\begin{main}\label{main}
Let $M$ be a finitely generated left $W$-module.
For all ${\nu\in\NN_0^{2n}}$ there exists $s_\nu\in\NN_0$ with the property that for all
$\omega\in\Omega$ and all $s\in\NN$ with $s>s_\nu$ it holds
$
{\rad{( 0 : \mathrm{G}^{\nu}\mathrm{G}^{\omega}M )}
=\rad{( 0 : \mathrm{G}^{\nu}\mathrm{G}^{\sw}M )}
=\rad{( 0 : \mathrm{G}^{\sw}M )}}
$
as ideals of $K[X,Y]$.
\end{main}

\begin{proof}
We fix any $\nu\in\NN_0^{2n}$.
We find $r\in\NN$ such that $M$ is generated over $R$ by $r$ of its elements.
%$m_1,\ldots,m_r\in M$.

First, by induction over $r$, we prove the existence of $s_{\nu}\in\NN_0$ such that
for all $\omega\in\Omega$ and all $s\in\NN$ with $s>s_{\nu}$ it holds
$\rad{( 0 : \mathrm{G}^{\nu}\mathrm{G}^{\omega}M )}=\rad{( 0 : \mathrm{G}^{\sw}M )}$.

If $r=1$, then $M\cong W\mod L$ for a left ideal $L$ of $W$.
By \ref{sequenza esatta}, \ref{annullatore}, \ref{athena}
we find $s_{\nu}\in\NN_0$ such that for all $\omega\in\Omega$ and all $s\in\NN$
with $s>s_{\nu}$ it holds
%by~\ref{annullatore 2} and~\ref{annullatore} and by~\ref{silvestro} and \ref{topolino} we get
\( 
% \rad{(0:\mathrm{G}^{\nu}\mathrm{G}^{\omega}M)} 
% =
\rad{(0:\mathrm{G}^{\nu}\mathrm{G}^{\omega}W / L)} 
%=\rad{\mathrm{G}^{\nu}(0:\mathrm{G}^{\omega}W / L)} 
%=\rad{\mathrm{G}^{\nu}(0:\Gr^{\omega}W \mod \Gr^{\omega}L)}
=\rad{\mathrm{G}^{\nu}\mathrm{G}^{\omega}L} 
=\rad{\mathrm{G}^{\sw}L} \!
%=\rad{(0:\mathrm{G}^{\sw}W\mod\mathrm{G}^{\sw}L)} 
=\rad{(0:\mathrm{G}^{\sw}W / L)}.
% =\rad{(0:\mathrm{G}^{\sw}M)}.
\) 

If $r>1$,
we find a cyclic submodule $N$ of $M$
%, say $N=Rm_1$,
such that $P=M\mod N$ is generated by 
$r-1$ elements.
%for instance $N=Rm_1$.
As before, by \ref{athena} we find $s'_{\nu}\in\NN_0$ such that
for all $\omega\in\Omega$ and all $s\in\NN$ with $s>s'_{\nu}$
it holds
\(
\rad{( 0 : \mathrm{G}^{\nu}\mathrm{G}^{\omega}N )}=\rad{( 0 : \mathrm{G}^{\sw}N )}.
\)
By induction we find $s''_{\nu}\in\NN_0$ such that
\(
\rad{( 0 : \mathrm{G}^{\nu}\mathrm{G}^{\omega}P )}=\rad{( 0 : \mathrm{G}^{\sw}P )}
\)
for all $\omega\in\Omega$ and all ${s\in\NN}$ with $s>s''_{\nu}$.
By~\ref{sequenza esatta} we get
\( 
 \rad{( 0 : \mathrm{G}^{\nu}\mathrm{G}^{\omega}M )}
=\rad{( 0 : \mathrm{G}^{\nu}\mathrm{G}^{\omega}N )}
 \cap
 \rad{( 0 : \mathrm{G}^{\nu}\mathrm{G}^{\omega}P )} %\\
=\rad{( 0 : \mathrm{G}^{\sw}N )}\cap\rad{( 0 : \mathrm{G}^{\sw}P )}
=\rad{( 0 : \mathrm{G}^{\sw}M )}
\) 
for all $\omega\in\Omega$ and all $s\in\NN$ with ${s>s_{\nu}}$,
where $s_{\nu}=\max{}\{s'_{\nu},s''_{\nu}\}$,
so that $s_{\nu}$ is independent of $\omega$.
This completes the induction step.

% Still have to show that
% $\rad{( 0 : \mathrm{G}^{\nu}\mathrm{G}^{\omega}M )}
% =\rad{( 0 : \mathrm{G}^{\nu}\mathrm{G}^{\sw}M )}$
% for all $\omega\in\Omega$ and all $s\in\NN$ with ${s>s_{\nu}}$.
Now, by \ref{annullatore 2}, \ref{GG2}, \ref{GG}, it follows 
\(
 \rad{(0:\Gr^{\nu}\Gr^{\sw}M)}
%=\rad{\Gr^{\nu}(0:\Gr^{\sw}M)}
=\rad{\Gr^{\nu}\rad{(0:\Gr^{\sw}M)}}
=\rad{\Gr^{\nu}\rad{(0:\Gr^{\nu}\Gr^{\omega}M)}}
%=\rad{\Gr^{\nu}(0:\Gr^{\nu}\Gr^{\omega}M)}
=\rad{(0:\Gr^{\nu}\Gr^{\nu}\Gr^{\omega}M)}
=\rad{(0:\Gr^{\nu}\Gr^{\omega}M)}
\)
for all $\omega\in\Omega$ and all $s\in\NN$ with $s>s_{\nu}$.
% 
% The existence of $s_{(1\vec 1)}\in\NN_0$ such that
% \(
% {\mathcal{C}^{\omega}(M)
% =\mathcal{C}^{(1\vec 1)+s\omega}(M)
% =\mathcal{V}^{(1\vec 1)+s\omega}(M)}
% \)
% for all  ${s>s_{(1\vec 1)}}$
% is now clear.
\end{proof}

\begin{corollary}\label{mamma mia}
There exists $s_{(1\vec 1)}\in\NN_0$ such that
for all $\omega\in\Omega$ and all $s\in\NN$ with $s>s_{(1\vec 1)}$ one has
$
{\mathcal{C}^{\omega}(M)
=\mathcal{V}^{(1\vec 1)+s\omega}(M)
=\mathcal{C}^{(1\vec 1)+s\omega}(M)
}.
$
\end{corollary}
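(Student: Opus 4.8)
The plan is to obtain this corollary as a direct geometric translation of the Main Theorem, by applying the zero-set operator $\Var(-)$ to the chain of radical annihilator ideals it furnishes. The only auxiliary facts needed are that $\Var(\rad{I})=\Var(I)$ for every ideal $I$ of $K[X,Y]$ and the algebraic descriptions of the characteristic varieties and critical cones recorded in \ref{titu}.

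Concretely, I would first apply the Main Theorem with $\nu=(1\vec 1)$. It produces $s_{(1\vec 1)}\in\NN_0$ such that for every $\omega\in\Omega$ and every $s\in\NN$ with $s>s_{(1\vec 1)}$ one has, as ideals of $K[X,Y]$,
\[
\rad{(0:\Gr^{(1\vec 1)}\Gr^\omega M)}=\rad{(0:\Gr^{(1\vec 1)}\Gr^{(1\vec 1)+s\omega}M)}=\rad{(0:\Gr^{(1\vec 1)+s\omega}M)}.
\]
Next I would record that $(1\vec 1)+s\omega$ again lies in $\Omega$: for $1\le i\le n$ the sum of its $i$-th and $(n+i)$-th coordinates equals $2+s(\omega_i+\omega_{n+i})>0$. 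Hence, by \ref{papparapa} and \ref{titu}, the objects $\mathcal{V}^{(1\vec 1)+s\omega}(M)$ and $\mathcal{C}^{(1\vec 1)+s\omega}(M)$ are defined, as is $\mathcal{C}^\omega(M)$.

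Finally I would apply $\Var(-)$ to the displayed chain. Since $\Var(\rad{I})=\Var(I)$, and since \ref{titu} yields $\mathcal{C}^\omega(M)=\Var(0:\Gr^{(1\vec 1)}\Gr^\omega M)$, $\mathcal{C}^{(1\vec 1)+s\omega}(M)=\Var(0:\Gr^{(1\vec 1)}\Gr^{(1\vec 1)+s\omega}M)$ and $\mathcal{V}^{(1\vec 1)+s\omega}(M)=\Var(0:\Gr^{(1\vec 1)+s\omega}M)$, the three closed sets of $\Spec(K[X,Y])$ coincide, i.e.\ $\mathcal{C}^\omega(M)=\mathcal{V}^{(1\vec 1)+s\omega}(M)=\mathcal{C}^{(1\vec 1)+s\omega}(M)$, which is the claim. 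There is no real obstacle here: all the substance lies in the Main Theorem, and the corollary is a purely formal passage from annihilator ideals to their vanishing loci; the sole point worth spelling out is the membership $(1\vec 1)+s\omega\in\Omega$, which legitimizes the right-hand objects as characteristic varieties and critical cones.
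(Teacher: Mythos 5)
Your proposal is correct and is precisely the argument the paper intends by its terse ``Clear by \ref{main}'': specialize the Main Theorem to $\nu=(1\vec 1)$, apply $\Var(-)$, and read off the identifications from \ref{titu}. The only detail you add beyond what the paper leaves implicit is the check that $(1\vec 1)+s\omega\in\Omega$, which is a sensible point to record.
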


\begin{proof}
Clear by \ref{main}.
\end{proof}

\begin{corollary}\label{corcor}
It holds
$\mathcal{C}^{(0 \vec 1)}(M)=\mathcal{V}^{(1 \vec s)}(M)=\mathcal{C}^{(1 \vec s)}(M)$
for $s\gg 0$, % by \ref{mamma mia}, %.
whereas
$\mathcal{C}^{(1 \vec 1)}(M)=\mathcal{V}^{(1 \vec 1)}(M)$.
%by \ref{GG}.
\end{corollary}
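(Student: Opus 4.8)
The plan is to read off both statements from the Main Theorem~\ref{main}, in the form of its geometric corollary~\ref{mamma mia}, together with one observation about the weight vector $(1\vec 1)$ itself.

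First I would obtain the left-hand chain by feeding the order weight vector into~\ref{mamma mia}. Since $(0\vec 1)\in\Omega$ (because $\omega_i+\omega_{n+i}=1>0$ for every $i$), Corollary~\ref{mamma mia} yields $s_{(1\vec 1)}\in\NN_0$ such that
\[
\mathcal{C}^{(0\vec 1)}(M)=\mathcal{V}^{(1\vec 1)+s(0\vec 1)}(M)=\mathcal{C}^{(1\vec 1)+s(0\vec 1)}(M)
\]
for all $s\in\NN$ with $s>s_{(1\vec 1)}$. Now $(1\vec 1)+s(0\vec 1)=(1\vec s+1)$, so replacing $s$ by $t-1$ gives $\mathcal{C}^{(0\vec 1)}(M)=\mathcal{V}^{(1\vec t)}(M)=\mathcal{C}^{(1\vec t)}(M)$ for all $t>s_{(1\vec 1)}+1$, which is precisely the asserted equality for $s\gg 0$.

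For the equality $\mathcal{C}^{(1\vec 1)}(M)=\mathcal{V}^{(1\vec 1)}(M)$ the point is that passing to a critical cone does nothing once the defining ideal is already homogeneous for total degree, and $\Gr^{(1\vec 1)}M$ is exactly such an object. Concretely, via the graded isomorphism $\psi^{(1\vec 1)}$ of~\ref{iso} the finitely generated $K[X,Y]$-module $\Gr^{(1\vec 1)}M$ is graded by total degree; I would endow it with the $\Fi^{(1\vec 1)}K[X,Y]$-filtration induced by that grading, $\Fi_i=\bigoplus_{j\le i}\Gr_j^{(1\vec 1)}M$. This is a good filtration: $\Gr^{(1\vec 1)}M$ is finitely generated and, since $\Fi^{(1\vec 1)}M$ is good hence discrete by~\ref{good => d}, its grading is bounded below, so homogeneous generators exhibit the filtration in the shape required by~\ref{def buona}. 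Applying Remark~\ref{GG} to $(W,\Fi^{(1\vec 1)}W)$ and $M$ now gives $\Gr^{(1\vec 1)}\bigl(\Gr^{(1\vec 1)}M\bigr)\cong\Gr^{(1\vec 1)}M$ as graded $K[X,Y]$-modules, once one identifies, through $\psi^{(1\vec 1)}$, the filtration of $\Gr^{(1\vec 1)}W$ induced by its grading with $\Fi^{(1\vec 1)}K[X,Y]$. Hence $\rad{(0:\Gr^{(1\vec 1)}\Gr^{(1\vec 1)}M)}=\rad{(0:\Gr^{(1\vec 1)}M)}$ for this choice of good filtration, and by~\ref{radicale}---exactly as in~\ref{papparapa} and~\ref{titu}---the left side is independent of the choice, so $\mathcal{C}^{(1\vec 1)}(M)=\mathcal{V}^{(1\vec 1)}(M)$. (Alternatively, one may invoke~\ref{main} with $\nu=\omega=(1\vec 1)$ to get $\mathcal{C}^{(1\vec 1)}(M)=\mathcal{V}^{(1\vec 1)+s(1\vec 1)}(M)$ for $s\gg 0$ and then note, via the Gr\"obner description in~\ref{GB->GS}, that rescaling a weight vector by a positive integer only reindexes the grading, whence $\mathcal{V}^{c\omega}(M)=\mathcal{V}^{\omega}(M)$ for every $c\in\NN$.)

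The only non-formal step is this second equality: one must check that the grading-induced filtration on $\Gr^{(1\vec 1)}M$ really is a good $\Fi^{(1\vec 1)}K[X,Y]$-filtration, and that, after transporting along $\psi^{(1\vec 1)}$, it is this filtration for which Remark~\ref{GG} literally produces the isomorphism, so that~\ref{radicale} may be used to discard the choice. Everything in the first chain is then a substitution into~\ref{mamma mia} followed by a shift of index.
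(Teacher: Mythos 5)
Your proof is correct and follows essentially the same route as the paper: the first chain is read off from Corollary~\ref{mamma mia} at $\omega=(0\vec 1)$ after reindexing, and the equality $\mathcal{C}^{(1\vec 1)}(M)=\mathcal{V}^{(1\vec 1)}(M)$ comes from Remark~\ref{GG} applied to the grading-induced filtration on $\Gr^{(1\vec 1)}M$, combined with independence of the good filtration. You have simply spelled out in full the one-line justification the paper gives (``clear by~\ref{mamma mia}; the second follows from~\ref{GG}''), including the needed but routine check that the grading-induced filtration is good.
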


\begin{proof}
The first statement is clear by \ref{mamma mia}, %with $\omega=(0\vec 1)$,
the second follows from \ref{GG}.
\end{proof}

\section{Application 1: Dimension of Characteristic Varieties}\label{sec5}

\noindent
In this section, as an application of Theorem~\ref{main},
we aim to furnish a new proof of a classical result:
fixed a finitely generated left $W$-module $M$,
the characteristic varieties $\mathcal{V}^{\omega}(M)$, $\omega\in\Omega$,
all have the same Krull dimension.

This is usually proved, as exposed by Ehlers in \cite[Chapter~V]{Bor},
by not trivial homological methods.
It turns out indeed that 
$\Kdim_{K[X,Y]}\Gr^{\omega}M=2n-\mathrm{j}_W(M)$ 
for all $\omega\in\Omega$,
where $\mathrm{j}_W(M)={\inf{}\{i\in\NN_0\mid \Ext_W^i(M,W)\ne 0\}}$.

Bernstein provided in 1971 a proof that $\mathcal{V}^{(1\vec 1)}(M)$ and
$\mathcal{V}^{(0\vec 1)}(M)$ have the same Krull dimension, see \cite[Theorem~3.1]{Ber}.

Our proof descends 
(1)~from the \emph{equality of annihilators} obtained in \ref{main},
which in particular allows to pass in a certain sense from non-finite to finite filtrations,
(2)~from the \emph{preservation of the Gelfand--Kirillov dimension} when passing from finitely 
filtered objects to their associated graded objects, see \ref{GM=M},
and (3)~from the \emph{equality of  Krull and  Gelfand--Kirillov dimension} in the category 
of noetherian modules over a noetherian commutative $K$-algebra, see \ref{Kdim=GKdim 2}.

We begin with some necessary results about the Gelfand--Kirillov dimension that can be found
in \cite{K-L} or \cite{MC-R}.

\begin{reminder}
Let $F$ be a field and $B$ be a finitely generated $F$-algebra.
We  find a \concept{generating space of $B$}, that is, an $F$-module $V$ of finite length 
such that $F\subseteq V$ and $B$ is generated as an $F$-algebra by $V$.
By $V^i$, $i\in\NN_0$, we denote the $F$-module
consisting of all  polynomials
in the (in general not commuting) elements of $V$ with coefficients in $F$
of total degree less than or equal to $i$,
so that in particular $V^0=F$, $V^1=V$, $V^i\subseteq V^{i+1}$,
$B=\smash{\bigcup_{i\in\NN_0}V^i}$.
The \concept{Gelfand--Kirillov dimension of $B$} is defined as
$\GKdim B=\smash{\limsup_{i\rightarrow\infty}\log_i(\len_F V^i)}\in [0,\infty]$,
and it is independent of $V$.
If $A$ is any $F$-algebra, we define $\GKdim A=\sup_B \GKdim B$, where the supremum is
taken over all finitely generated $F$-subalgebras $B$ of $A$.
For finitely generated $F$-algebras the two definitions are easily shown to be equivalent.

Let $N$ be a finitely generated left $B$-module.
We  find a \concept{generating space of $N$}, that is,
an $F$-module $W$ of finite length such that $N$ is generated as a $B$-module by~$W$.
The \concept{Gelfand--Kirillov dimension of $N$} is defined as
$\GKdim_B N=\smash{\limsup_{i\rightarrow\infty}\log_i(\len_F V^i W)}\in[0,\infty]$,
and it is independent of $V$ and of~$W$.
If $M$ is any $A$-module, we define $\GKdim_A M=\sup_B \sup_N \GKdim_B N$, where the suprema
are taken over all finitely generated $F$-subalgebras $B$ of $A$ and all finitely generated
$B$-submodules of~$M$.
For finitely generated modules over finitely generated $F$-algebras
the two definitions are easily shown to be equivalent.
% 
% If $I$ is a two-sided ideal of $A$,
% then $\GKdim {A\mod I}=\GKdim_{A\mod I}A\mod I=\GKdim_A A\mod I$.
\end{reminder}

\begin{reminder}\label{Kdim=GKdim 2}
Let $F$ be a field, $A$ be a finitely generated commutative $F$-algebra,
and $M$ be a finitely generated $A$-module.
Then for the \concept{Krull dimension} $\Kdim_A M$ of $M$, defined as the supremum of
the lengths of chains of prime ideals of the commutative ring ${A\mod (0 : M)}$, it holds
$\Kdim_A M=\GKdim_A M \in \NN_0\cup\{-\infty,\infty\}$.

Indeed, in our hypotheses both dimensions are exact,
see \cite[Theorem~6.14]{K-L} for the Gelfand-Kirillov dimension,
and hence we may assume that $M=A\mod I$ for some ideal $I$.
As both dimensions are preserved when changing the base ring from $A$ to $A\mod I$,
see \cite[Proposition~5.1(c)]{K-L} for the Gelfand--Kirillov dimension,
it is sufficient to compare $\Kdim A\mod I$ to $\GKdim A\mod I$.
As both dimensions are preserved when passing to integral extensions,
see \cite[Proposition~5.5]{K-L} for the Gelfand--Kirillov dimension,
by Emmy Noether's Normalization Lemma
we may replace the finitely generated $F$-algebra $A\mod I$
by the polynomial ring $F[X_1,\ldots,X_d]$, where $d=\Kdim A\mod I$.
By arguments of Linear Algebra, one shows that $\GKdim F[X_1,\ldots,X_d]=d$.
See \cite[Proposition~7.9]{K-L} or \cite[Corollary~1.1.16]{Bel} for more details.

Alternatively, one easily gets
$\GKdim A = {\inf{}\{\alpha\in\RR\mid \len_K V^i\le i^\alpha\text{ for }i\gg 0\}}$,
see \cite[Lemma~2.1]{K-L}.
It follows that $\GKdim A$ is indeed equal to the degree of the Hilbert polynomial of $A$,
which in turn is equal to $\Kdim A$,
and one concludes again
by the exactness of both dimensions
and by changing the base ring.
\end{reminder}

% \begin{lemma}\label{Kdim=GKdim 2}
% Let $F$ be a field, $A$ be a finitely generated $F$-algebra,
% and $M$ be a finitely generated $A$-module.
% Then $\Kdim_A M=\GKdim_A M$.
% \end{lemma}

% \begin{proof}
% \cite[Proposition~7.9]{K-L},
% \end{proof}

\begin{definition}\label{finfil}
Let $F$ be a field, $A$ be an $F$-algebra, $\mathcal{A}$ be a filtration of $A$,
$M$ be a left $A$-module, and $\mathcal{M}$ be an $\mathcal{A}$-filtration of $M$.
We say that $\mathcal{M}$ is \concept{finite} if $\len_F(\Fi_i\filt{M})<\infty$
for all $i\in\ZZ$.
\end{definition}

\begin{remark}\label{finite and discrete}
In the notation of \ref{finfil}, if $\filt{A}$ is finite
and $M$ is finitely generated and $\filt{M}$ is good, then $\filt{M}$ is finite and discrete.
Indeed, $\filt{M}$ is equivalent to a standard good filtration $\filt{S}$ of $M$,
see \ref{equivalente} and \ref{buona}.
Now, $\filt{S}$ is finite whenever $\filt{A}$ is finite, and $\filt{S}$ is always discrete.
\end{remark}

\begin{lemma}\label{GM=M}
Let $F$ be a field, $A$ be a $K$-algebra, $\filt{A}$ be a filtration of $A$, 
$M$ be a left $A$-module, and $\filt{M}$ be an $\mathcal{A}$-filtration of $M$.
Then $\GKdim_{\mathrm{G}\mathcal{A}} \mathrm{G}\mathcal{M} \le \GKdim_A M$.

Furthermore, if the filtration $\mathcal{A}$ is finite
and is such that the $F$-algebra $\mathrm{G}\mathcal{A}$ is finitely generated,
and if the $\mathcal{A}$-filtration $\mathcal{M}$ is finite and discrete
and is such that the $\mathrm{G}\mathcal{A}$-module $\mathrm{G}\mathcal{M}$ 
is finitely generated,
then  $\GKdim_{\mathrm{G}\mathcal{A}} \mathrm{G}\mathcal{M} = \GKdim_A M$.
\end{lemma}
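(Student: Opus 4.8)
The plan is to establish the inequality $\GKdim_{\Gr\filt{A}}\Gr\filt{M}\le\GKdim_A M$ directly from the definitions, and then, under the extra hypotheses, to prove the reverse inequality by exhibiting, for a suitable generating space of $M$, a generating space of $\Gr\filt M$ whose filtered pieces have at least as much length, using the finiteness and discreteness of $\filt M$ together with the identifications of \ref{GG}.

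First, for the inequality $\GKdim_{\Gr\filt A}\Gr\filt M\le\GKdim_A M$, I would take any finitely generated $F$-subalgebra of $\Gr\filt A$ and any finitely generated submodule of $\Gr\filt M$ over it. Choose a generating space $V$ of $A$ that is compatible with the filtration, in the sense that $V\cap\Fi_i\filt A$ generates $\Fi_i\filt A$ as an $F$-module for every $i$ — more simply, one may work with the standard good filtration to which $\filt M$ is equivalent, invoking \ref{equivalente} and the fact that the Gelfand--Kirillov dimension is an invariant of the equivalence class of filtration (this is the technical point: equivalent filtrations yield the same growth, hence the same $\GKdim$ of the associated graded, by an $r$-shift argument as in the definition of equivalence). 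Then the symbol map sends $V^i$ onto a generating set of $\bigoplus_{j\le i}\Gr_j\filt A$, and similarly $V^iW$ onto a spanning set of $\bigoplus_{j\le i}\Gr_j\filt M$ for a generating space $W$ of $M$; since the symbol map is surjective on each graded piece (Definition~\ref{mappa simbolica}), the length of the image is at most $\len_F(V^iW)$. Taking $\limsup_{i\to\infty}\log_i$ of both sides gives the desired inequality after passing to the suprema over subalgebras and submodules.

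For the reverse inequality under the additional hypotheses, the key is that when $\filt M$ is finite and discrete, the symbol map restricted to each $\Fi_i\filt M$ is actually an $F$-module isomorphism onto $\bigoplus_{j\le i}\Gr_j\filt M\cong\Fi_i\filt{GM}$ — this is exactly the content of \ref{GG}, which provides isomorphisms $\Gr\filt A\cong\Gr\filt{GA}$ and $\Gr\filt M\cong\Gr\filt{GM}$ over isomorphic rings, with $\len_F\Fi_i\filt A=\len_F\Fi_i\filt{GA}$ and $\len_F\Fi_i\filt M=\len_F\Fi_i\filt{GM}$. Since $\Gr\filt A$ is finitely generated and $\Gr\filt M$ is finitely generated over it, and these are now finite (discrete) filtered objects, I can compute $\GKdim_{\Gr\filt A}\Gr\filt M$ using a generating space built from a finite generating space of $A$ and of $M$ whose $i$-th filtered piece has exactly the length $\len_F\Fi_i\filt M$, and compare directly: the growth function of $\Gr\filt M$ equals that of $M$ up to the bounded shift coming from equivalence with a standard good filtration. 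Hence $\GKdim_{\Gr\filt A}\Gr\filt M\ge\GKdim_A M$, and combined with the first part we get equality.

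The main obstacle I expect is the bookkeeping around which filtration to use: one must be careful that the naive generating-space argument controls $\len_F(V^i)$ versus $\len_F\Fi_i\filt A$ correctly, because a generating space need not be filtered. The clean route is to reduce immediately to a standard good filtration via \ref{equivalente} and \ref{finite and discrete}, observe that $\GKdim$ of the associated graded depends only on the equivalence class (an $r$-shift changes $\len_F\Fi_i$ by at most a bounded factor, which disappears under $\limsup\log_i$), and then the finite--discrete case is essentially tautological through \ref{GG}. Everything else is routine manipulation of the definitions in the two reminders.
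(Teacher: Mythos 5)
The paper's proof is a one-line citation to \cite[Lemma~6.5 and Proposition~6.6]{K-L}, so you are to be commended for attempting a self-contained argument, but your sketch of the first inequality has a real gap. You correctly begin by fixing a finitely generated $F$-subalgebra of $\Gr\filt A$ and a finitely generated submodule of $\Gr\filt M$, but then immediately switch to choosing a generating space $V$ of $A$ itself that is ``compatible with the filtration''; such a $V$ need not exist, since in the first part of the lemma $A$ is not assumed finitely generated over $F$ (indeed the definition of $\GKdim$ used here is precisely the supremum over finitely generated subalgebras). Your proposed remedy --- replacing $\filt M$ by ``the standard good filtration to which $\filt M$ is equivalent'' via \ref{equivalente} --- does not apply: \ref{equivalente} says that two \emph{good} filtrations of a fixed finitely generated module are equivalent, not that an arbitrary filtration is equivalent to a good one, and $\filt M$ is not assumed good nor $M$ finitely generated in this part. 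The argument that actually works (and is what the cited \cite[Lemma~6.5]{K-L} does) runs in the opposite direction: take homogeneous generating spaces $\overline{V}\subseteq\Gr\filt A$ and $\overline{W}\subseteq\Gr\filt M$ of the chosen subalgebra and submodule, \emph{lift} each homogeneous element of degree $d$ to a representative in $\Fi_d\filt A$ (resp.\ $\Fi_d\filt M$), obtaining finite-dimensional $V\subseteq A$, $W\subseteq M$, and then compare: the induced filtration on the finite-dimensional $F$-module $V^iW$ has associated graded containing $\overline{V}^i\,\overline{W}$, so $\len_F(\overline{V}^i\overline{W})\le\len_F\Gr(V^iW)=\len_F(V^iW)$, and one concludes by $\limsup_i\log_i$. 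Nowhere does one need a filtration-compatible generating space of all of $A$.

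Two smaller issues. First, your claim that ``the symbol map restricted to each $\Fi_i\filt M$ is actually an $F$-module isomorphism onto $\bigoplus_{j\le i}\Gr_j\filt M$'' is not right as stated: the symbol map is not additive (see \ref{mappa simbolica}). What is true, and what \ref{GG} actually records, is that $\len_F\Fi_i\filt M=\len_F\Fi_i\filt{GM}$ for discrete $\filt M$ because the short exact sequences $0\to\Fi_{j-1}\filt M\to\Fi_j\filt M\to\Gr_j\filt M\to 0$ split over the field $F$; the isomorphism is a choice of splittings, not the symbol map. Second, for the reverse inequality in the finite-and-discrete case, the substantive step you leave implicit is that because $\Gr\filt A$ (resp.\ $\Gr\filt M$) is finitely generated, $A$ is generated as an $F$-algebra by $\Fi_d\filt A$ for some $d$ (resp.\ $M$ is generated over $A$ by $\Fi_d\filt M$); one then shows $V^i$ and $\Fi_{ci}\filt A$ have comparable lengths for $V=\Fi_d\filt A$ and a suitable constant $c$, which together with the length equalities from \ref{GG} gives $\GKdim_A M\le\GKdim_{\Gr\filt A}\Gr\filt M$. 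Your proposal gestures at this via ``bounded shift'' but does not actually carry it out.
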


\begin{proof}
By  arguments of Linear Algebra,
see \cite[Lemma~6.5 \& Proposition~6.6]{K-L}.
\end{proof}

\begin{theorem}\label{brodmann's challenge}
In the notation of the previous section,
it holds
\(
\Kdim_{K[X,Y]} \Gr^{\omega}M
=
\GKdim_{K[X,Y]} \Gr^{\omega}M
=
\GKdim_{W} M,
\)
and hence
\(
\Kdim\mathcal{V}^{\omega}(M)=\GKdim_{W}M,
\)
for all ${\omega\in\Omega}$.
\end{theorem}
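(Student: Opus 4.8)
The plan is to use the Main Theorem~\ref{main} to replace the filtration $\Fi^\omega M$, which in general is not finite, by a finite one, and then to invoke Lemma~\ref{GM=M}. Two reductions are immediate. Since $K[X,Y]$ is a finitely generated commutative $K$-algebra and $\Gr^\omega M$ is a finitely generated $K[X,Y]$-module by~\ref{good => fg}, Reminder~\ref{Kdim=GKdim 2} already gives $\Kdim_{K[X,Y]}\Gr^\omega M=\GKdim_{K[X,Y]}\Gr^\omega M$; and since $\mathcal{V}^\omega(M)=\Var(0:\Gr^\omega M)$, the definitions of $\mathcal{V}^\omega(M)$ and of $\Kdim_{K[X,Y]}$ yield $\Kdim\mathcal{V}^\omega(M)=\Kdim\big(K[X,Y]/(0:\Gr^\omega M)\big)=\Kdim_{K[X,Y]}\Gr^\omega M$. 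Hence everything comes down to the single equality $\GKdim_{K[X,Y]}\Gr^\omega M=\GKdim_W M$.

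I would isolate one auxiliary observation: if $\theta\in\NN_0^{2n}$ has all its entries at least~$1$, then $\theta\in\Omega$ and the filtration $\Fi^\theta$ is \emph{finite}, both on $W$ and on $K[X,Y]$, because each $\Fi^\theta_i$ is spanned by the finitely many (normal) monomials of total degree at most~$i$. Consequently, for any finitely generated module $N$ over $W$ or over $K[X,Y]$ equipped with a good $\theta$-filtration --- which exists by~\ref{buona} --- Remark~\ref{finite and discrete} makes this filtration finite and discrete, $\Gr^\theta N$ is finitely generated over $\Gr^\theta W\cong K[X,Y]$ (resp.\ over $\Gr^\theta K[X,Y]\cong K[X,Y]$) by~\ref{good => fg}, and therefore Lemma~\ref{GM=M} gives $\GKdim_{K[X,Y]}\Gr^\theta N=\GKdim_W N$ (resp.\ $=\GKdim_{K[X,Y]}N$).

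Now I would string the equalities together. Applying the auxiliary observation with $\theta=(1\vec 1)$ to the $K[X,Y]$-module $\Gr^\omega M$ gives $\GKdim_{K[X,Y]}\Gr^\omega M=\GKdim_{K[X,Y]}\Gr^{(1\vec 1)}\Gr^\omega M$. By Reminder~\ref{Kdim=GKdim 2} the right-hand side equals $\Kdim\big(K[X,Y]/(0:\Gr^{(1\vec 1)}\Gr^\omega M)\big)$, which depends on the annihilator only through its radical. By the Main Theorem~\ref{main} applied with $\nu=(1\vec 1)$ there is an $s_0\in\NN_0$, in fact independent of $\omega$, such that $\rad{(0:\Gr^{(1\vec 1)}\Gr^\omega M)}=\rad{(0:\Gr^{(1\vec 1)+s\omega}M)}$ for every $s\in\NN$ with $s>s_0$; fix one such~$s$. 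Then the Krull dimension above equals $\Kdim_{K[X,Y]}\Gr^{(1\vec 1)+s\omega}M$, which by Reminder~\ref{Kdim=GKdim 2} is $\GKdim_{K[X,Y]}\Gr^{(1\vec 1)+s\omega}M$. Finally $(1\vec 1)+s\omega$ has all entries at least~$1$, so the auxiliary observation applied to $M$ gives $\GKdim_{K[X,Y]}\Gr^{(1\vec 1)+s\omega}M=\GKdim_W M$. Combining the chain yields $\GKdim_{K[X,Y]}\Gr^\omega M=\GKdim_W M$, and together with the first paragraph the theorem follows.

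The main obstacle is exactly the difficulty the paper is designed to remove: for a general $\omega\in\Omega$ the filtration $\Fi^\omega W$ fails to be finite as soon as $\omega$ has a vanishing entry, so Lemma~\ref{GM=M} cannot be applied to $\Fi^\omega M$ directly. The Main Theorem provides the bridge, letting us trade $\Gr^\omega M$ for $\Gr^{(1\vec 1)}\Gr^\omega M$, whose annihilator has the same radical as that of $\Gr^{(1\vec 1)+s\omega}M$ for $s$ large, the weight $(1\vec 1)+s\omega$ being strictly positive so that its filtration on $W$ is finite. The one point requiring care is the bookkeeping that each intermediate Gelfand--Kirillov dimension coincides with the Krull dimension of a quotient of $K[X,Y]$ by an ideal that is only seen up to radical, which is precisely what makes the insertion of~\ref{main} into the chain legitimate.
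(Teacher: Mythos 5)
Your proposal is correct and follows essentially the same route as the paper's own proof: both reduce to the Gelfand--Kirillov equality, insert $\Gr^{(1\vec 1)}\Gr^\omega M$ via Lemma~\ref{GM=M} since the $(1\vec 1)$-filtration is finite, trade its annihilator's radical for that of $\Gr^{(1\vec 1)+s\omega}M$ using the Main Theorem~\ref{main}, and then apply \ref{GM=M} and \ref{Kdim=GKdim 2} once more to land on $\GKdim_W M$. The only differences are organizational (your ``auxiliary observation'' packaging and the explicit spelling-out of $\Kdim\mathcal{V}^\omega(M)=\Kdim_{K[X,Y]}\Gr^\omega M$).
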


\begin{proof}
Let $\omega\in\Omega$.
Since the $(1\vec 1)$-filtration of $K[X,Y]$ is finite,
any good $(1\vec 1)$-filtration of $\Gr^{\omega}M$ is finite and discrete
by \ref{finite and discrete}.
Thus by \ref{GM=M}, % we have
$
\smash{\GKdim_{K[X,Y]}\Gr^{\omega}M}=
\smash{\GKdim_{K[X,Y]}\Gr^{(1\vec 1)}\Gr^{\omega}M}
$.
By~\ref{good => fg}, $\Gr^{(1\vec 1)}\Gr^{\omega}M$ is finitely generated over $K[X,Y]$,
and so, by \ref{Kdim=GKdim 2}, % we get
$
\smash{\GKdim_{K[X,Y]}\Gr^{(1\vec 1)}\Gr^{\omega}M}=
\smash{\GKdim K[X,Y]\mod\rad{(0:\Gr^{(1\vec 1)}\Gr^{\omega}M)}}
$.
By \ref{main}, % it holds
$
\smash{\GKdim K[X,Y]\mod\!\rad{(0:\Gr^{(1\vec 1)}\Gr^{\omega}M)}}
\!=\smash{\GKdim K[X,Y]\mod\rad{(0:\Gr^{(1\vec 1)+s\omega}M)}}
$,
${s\gg 0}$.
By \ref{Kdim=GKdim 2}, % we have
$
\smash{\GKdim K[X,Y]\mod\rad{(0:\Gr^{(1\vec 1)+s\omega}M)}}
=\smash{\GKdim_{K[X,Y]}\Gr^{(1\vec 1)+s\omega}M}
$,
$s\in\NN$.
Since  the ${(1\vec 1)+s\omega}$-filtrations of $W$ are finite, %$s\in\NN_0$,
and therefore by \ref{finite and discrete} the good
$\smash{(1\vec 1)+s\omega}$-filtrations of $M$ are finite and discrete,
%$s\in\NN$,
by \ref{GM=M} and \ref{iso} we obtain
$
\smash{\GKdim_{K[X,Y]}\Gr^{(1\vec 1)+s\omega}M}
=\smash{\GKdim_{W}M}
%=\smash{\GKdim_{K[X,Y]}\Gr^{(1\vec 1)}M}
$,
${s\in\NN}$.
%Now remember that $K[X,Y]\cong\Gr^\omega W$.
As for the Krull dimension, we conclude by~\ref{Kdim=GKdim 2}.
\end{proof}

\section{Application 2: Classification of Characteristic Varieties}\label{sec6}

\noindent
As before, let $K$ be a field of characteristic $0$.
For an arbitrary left ideal $L$ of the $1^{\text{st}}$ Weyl algebra $W$ over $K$
we aim to classify the characteristic varieties of~$W\mod L$.
More precisely,
% we aim to investigate which weight vectors $\omega\in\Omega$ give rise to the
% same characteristic variety of $W\mod L$.
% Thus
we aim to partition $\Omega=\NN_{0}^{2}\wo\{(0,0)\}$ into regions corresponding to
equivalence classes $[\omega]_{\sim_L}$ of
weights $\omega\in\Omega$ such that $\omega'\sim_{L}\omega''$
if and only if
$\smash{\Gr^{\omega'}L=\Gr^{\omega''}L}$.
%$\smash{\mathcal{V}^{\omega'}(W\mod L)=\mathcal{V}^{\omega''}(W\mod L)}$.
%
This would permit us to determine the number $\chi(L)$ of distinct ideals $\Gr^\omega L$,
${\omega\in\Omega}$, which we know to be finite by \ref{GL finite}.
Hence, because
$\smash{\Gr^{\omega'}L=\Gr^{\omega''}L}$
implies
$\smash{\mathcal{V}^{\omega'}(W\mod L)=\mathcal{V}^{\omega''}(W\mod L)}$ by \ref{annullatore},
$\chi(L)$ would be an upper bound for the number of distinct $\omega$-characteristic
varieties of $W\mod L$.

We do not succeed in this but by a computer experiment we approximate
 $\smash{\Omega\mod{\sim}_L}$ %in the $1^{\text{st}}$ Weyl algebra
and this allows us to conjecture an upper bound for $\chi(L)$ in terms of total degrees of
universal Gr\"obner bases of $L$.

\begin{remark}\label{krites}
Let $n\in\NN$.
For each finitely generated left module $M$ over the $\nth$ Weyl algebra over $K$
and for each
${\nu\in\NN_0^{2n}}$  there exists a \emph{minimal} number ${\kappa_\nu(M)\in\NN_0}$
such that for all $\omega\in\Omega$ the characteristic varieties
$\mathcal{V}^{\sw}(M)$ stabilize to ${\Var(0:\Gr^\nu\Gr^\omega M)}$
as soon as $s>\kappa_\nu(M)$.

In particular,
$\smash{\mathcal{V}^{(1\vec 1)+s\omega}(M)}$ becomes precisely the critical cone
$\mathcal{C}^\omega(M)$ for all ${\omega\in\Omega}$ as soon as 
$s>\kappa(M)=\smash{\kappa_{(1\vec 1)}(M)}$.
\end{remark}

\begin{remark}\label{martinetto}
Let $n\in\NN$.
For each left ideal $L$ of the $\nth$ Weyl algebra over $K$ and for each $\nu\in\NN_0^{2n}$
we put $\igb_\nu(L)=\smash{\inf_{U}^{}\sup_{u\in U\wo\{0\}}\deg^\nu(u)}$, 
where the infimum is taken over all universal Gr\"obner bases $U$ of $L$.
By the proof of \ref{athena}, (a)~$\kappa_\nu(W\mod L)\le \igb_\nu(L)\in\NN_0$.
%(Does equality hold?)
Clearly, (b)~$\igb_{\nu'}(L)\le\igb_{\nu''}(L)$ whenever $|\nu'|\le|\nu''|$.
Finally, (c)~$\igb_{k\nu}(L)=k\igb_\nu(L)$ for all $k\in\NN_0$.
\end{remark}

\begin{experiment}\label{6.3}
Let $L$ be any left ideal of the $1^{\text{st}}$ Weyl algebra $W$ over $K$.
By \ref{athena} we can compute an \emph{approximation} of
$\Omega\mod{\sim_L}$ if we know $\kappa_\nu(W\mod L)$
for all $\nu\in\NN_0^{2n}$.
By the relations (a), (b), (c) of \ref{martinetto} we have
$\kappa_\nu(W\mod L)\le\gamma_\nu(L)\le\gamma_{\vert\nu\vert(1\vec 1)}(L)=
\vert\nu\vert\gamma(L)$, where we put $\igb(L)=\igb_{(1\vec 1)}(L)$.
Therefore, by \ref{athena}, knowing
the upper bound
$\igb(L)$
of $\kappa(W\mod L)$
is sufficient for computing a (coarser) approximation
of~$\Omega\mod{\sim_L}$.

For some numbers $s_0\in\NN_0$ we repeatedly do an experiment parametrized by $s_0$ as follows.
A computer calculates for us the intersection points 
among the half-lines $\ell_{\nu,\omega}\subseteq\Omega$ of the form
$\ell_{\nu,\omega}(s)=\nu+s\omega$,
$\nu\in\NN_0^2$, $\omega\in\Omega$, for $s>s_0$,
and paints incident half-lines by a common colour.
The points of $\Omega$ having got the same colour turn out to build cones in $\Omega$.
For instance, for $s_0=3$ the computer program painted $17$ differently coloured cones, among 
which $9$ are degenerate, that is, half-lines.
For typographical reasons, in Figure~\ref{fig:classificazione} we depict 
% only the borders of
the so obtained cones
 by connected regions in $\RR^2$,
alternately in black and gray.
For $s_0=3$ the $9$ degenerate cones are filled in black, whereas
% the borders of
the $8$ non-degenerate cones are filled in gray, and similarly in the other pictures of
Figure~\ref{fig:classificazione}.

%\vskip -0.25\baselineskip
\begin{figure}[ht] %H
\centering
\setcounter{subfigure}{0}
\subfigure[$s_0=0$]{\scalebox{0.333}{%%%%%%%%%%%%%%%%%%%%%%%%%%%%%%%%%%%%%%%%%%%%%%%%%%%%%%%%%%%%%%%
% file generated by roberto boldini's computer program "fan2" %
% total number of cones: 3
%%%%%%%%%%%%%%%%%%%%%%%%%%%%%%%%%%%%%%%%%%%%%%%%%%%%%%%%%%%%%%%
%
\begin{pspicture*}(-0.5,-0.5)(16.5,16.5)
\psgrid[gridlabels=0,gridcolor=lightgray,subgriddiv=1](0,0)(17,17)
\psset{linewidth=3pt}
% v=(1,0) e=(1,0)
{\psset{linecolor=black}\qline(1,0)(18,0)\qline(1,0)(18,0)}
% v=(1,1) e=(1,0)
{\psset{linecolor=gray}\qline(1,1)(18,1)\qline(1,1)(1,18)}
{\psset{linecolor=gray,fillcolor=gray}\pswedge*(1,1){27}{0}{90}}
% v=(0,1) e=(0,1)
{\psset{linecolor=black}\qline(0,1)(0,18)\qline(0,1)(0,18)}
\end{pspicture*}
%
%%%%%%%%%%%%%%%%%%%%%%%%%%%%%%%%%%%%%%%%%%%%%%%%%%%%%%%%%%%%%%%}}
\quad
\subfigure[$s_0=1$]{\scalebox{0.333}{%%%%%%%%%%%%%%%%%%%%%%%%%%%%%%%%%%%%%%%%%%%%%%%%%%%%%%%%%%%%%%%
% file generated by roberto boldini's computer program "fan2" %
% total number of cones: 5
%%%%%%%%%%%%%%%%%%%%%%%%%%%%%%%%%%%%%%%%%%%%%%%%%%%%%%%%%%%%%%%
%
\begin{pspicture*}(-0.5,-0.5)(16.5,16.5)
\psgrid[gridlabels=0,gridcolor=lightgray,subgriddiv=1](0,0)(17,17)
\psset{linewidth=3pt}
% v=(1,0) e=(1,0)
{\psset{linecolor=black}\qline(1,0)(18,0)\qline(1,0)(18,0)}
% v=(2,1) e=(1,0)
{\psset{linecolor=gray}\qline(2,1)(19,1)\qline(2,1)(19,18)}
{\psset{linecolor=gray,fillcolor=gray}\pswedge*(2,1){27}{0}{45}}
% v=(1,1) e=(1,1)
{\psset{linecolor=black}\qline(1,1)(18,18)\qline(1,1)(18,18)}
% v=(1,2) e=(1,1)
{\psset{linecolor=gray}\qline(1,2)(18,19)\qline(1,2)(1,19)}
{\psset{linecolor=gray,fillcolor=gray}\pswedge*(1,2){27}{45}{90}}
% v=(0,1) e=(0,1)
{\psset{linecolor=black}\qline(0,1)(0,18)\qline(0,1)(0,18)}
\end{pspicture*}
%
%%%%%%%%%%%%%%%%%%%%%%%%%%%%%%%%%%%%%%%%%%%%%%%%%%%%%%%%%%%%%%%}}
% \quad

\smallskip
\subfigure[$s_0=2$]{\scalebox{0.333}{%%%%%%%%%%%%%%%%%%%%%%%%%%%%%%%%%%%%%%%%%%%%%%%%%%%%%%%%%%%%%%%
% file generated by roberto boldini's computer program "fan2" %
% total number of cones: 9
%%%%%%%%%%%%%%%%%%%%%%%%%%%%%%%%%%%%%%%%%%%%%%%%%%%%%%%%%%%%%%%
%
\begin{pspicture*}(-0.5,-0.5)(16.5,16.5)
\psgrid[gridlabels=0,gridcolor=lightgray,subgriddiv=1](0,0)(17,17)
\psset{linewidth=3pt}
% v=(1,0) e=(1,0)
{\psset{linecolor=black}\qline(1,0)(18,0)\qline(1,0)(18,0)}
% v=(3,1) e=(1,0)
{\psset{linecolor=gray}\qline(3,1)(20,1)\qline(3,1)(21,10)}
{\psset{linecolor=gray,fillcolor=gray}\pswedge*(3,1){27}{0}{26.565}}
% v=(2,1) e=(2,1)
{\psset{linecolor=black}\qline(2,1)(20,10)\qline(2,1)(20,10)}
% v=(3,2) e=(2,1)
{\psset{linecolor=gray}\qline(3,2)(21,11)\qline(3,2)(20,19)}
{\psset{linecolor=gray,fillcolor=gray}\pswedge*(3,2){27}{26.565}{45}}
% v=(1,1) e=(1,1)
{\psset{linecolor=black}\qline(1,1)(18,18)\qline(1,1)(18,18)}
% v=(2,3) e=(1,1)
{\psset{linecolor=gray}\qline(2,3)(19,20)\qline(2,3)(11,21)}
{\psset{linecolor=gray,fillcolor=gray}\pswedge*(2,3){27}{45}{63.435}}
% v=(1,2) e=(1,2)
{\psset{linecolor=black}\qline(1,2)(10,20)\qline(1,2)(10,20)}
% v=(1,3) e=(1,2)
{\psset{linecolor=gray}\qline(1,3)(10,21)\qline(1,3)(1,20)}
{\psset{linecolor=gray,fillcolor=gray}\pswedge*(1,3){27}{63.435}{90}}
% v=(0,1) e=(0,1)
{\psset{linecolor=black}\qline(0,1)(0,18)\qline(0,1)(0,18)}
\end{pspicture*}
%
%%%%%%%%%%%%%%%%%%%%%%%%%%%%%%%%%%%%%%%%%%%%%%%%%%%%%%%%%%%%%%%}}
% \smallskip
\quad
\subfigure[$s_0=3$]{\scalebox{0.333}{%%%%%%%%%%%%%%%%%%%%%%%%%%%%%%%%%%%%%%%%%%%%%%%%%%%%%%%%%%%%%%%
% file generated by roberto boldini's computer program "fan2" %
% total number of cones: 17
%%%%%%%%%%%%%%%%%%%%%%%%%%%%%%%%%%%%%%%%%%%%%%%%%%%%%%%%%%%%%%%
%
\begin{pspicture*}(-0.5,-0.5)(16.5,16.5)
\psgrid[gridlabels=0,gridcolor=lightgray,subgriddiv=1](0,0)(17,17)
\psset{linewidth=3pt}
% v=(1,0) e=(1,0)
{\psset{linecolor=black}\qline(1,0)(18,0)\qline(1,0)(18,0)}
% v=(4,1) e=(1,0)
{\psset{linecolor=gray}\qline(4,1)(21,1)\qline(4,1)(22,7)}
{\psset{linecolor=gray,fillcolor=gray}\pswedge*(4,1){27}{0}{18.435}}
% v=(3,1) e=(3,1)
{\psset{linecolor=black}\qline(3,1)(21,7)\qline(3,1)(21,7)}
% v=(5,2) e=(3,1)
{\psset{linecolor=gray}\qline(5,2)(23,8)\qline(5,2)(23,11)}
{\psset{linecolor=gray,fillcolor=gray}\pswedge*(5,2){27}{18.435}{26.565}}
% v=(2,1) e=(2,1)
{\psset{linecolor=black}\qline(2,1)(20,10)\qline(2,1)(20,10)}
% v=(5,3) e=(2,1)
{\psset{linecolor=gray}\qline(5,3)(23,12)\qline(5,3)(23,15)}
{\psset{linecolor=gray,fillcolor=gray}\pswedge*(5,3){27}{26.565}{33.690}}
% v=(3,2) e=(3,2)
{\psset{linecolor=black}\qline(3,2)(21,14)\qline(3,2)(21,14)}
% v=(4,3) e=(3,2)
{\psset{linecolor=gray}\qline(4,3)(22,15)\qline(4,3)(21,20)}
{\psset{linecolor=gray,fillcolor=gray}\pswedge*(4,3){27}{33.690}{45}}
% v=(1,1) e=(1,1)
{\psset{linecolor=black}\qline(1,1)(18,18)\qline(1,1)(18,18)}
% v=(3,4) e=(1,1)
{\psset{linecolor=gray}\qline(3,4)(20,21)\qline(3,4)(15,22)}
{\psset{linecolor=gray,fillcolor=gray}\pswedge*(3,4){27}{45}{56.310}}
% v=(2,3) e=(2,3)
{\psset{linecolor=black}\qline(2,3)(14,21)\qline(2,3)(14,21)}
% v=(3,5) e=(2,3)
{\psset{linecolor=gray}\qline(3,5)(15,23)\qline(3,5)(12,23)}
{\psset{linecolor=gray,fillcolor=gray}\pswedge*(3,5){27}{56.310}{63.435}}
% v=(1,2) e=(1,2)
{\psset{linecolor=black}\qline(1,2)(10,20)\qline(1,2)(10,20)}
% v=(2,5) e=(1,2)
{\psset{linecolor=gray}\qline(2,5)(11,23)\qline(2,5)(8,23)}
{\psset{linecolor=gray,fillcolor=gray}\pswedge*(2,5){27}{63.435}{71.565}}
% v=(1,3) e=(1,3)
{\psset{linecolor=black}\qline(1,3)(7,21)\qline(1,3)(7,21)}
% v=(1,4) e=(1,3)
{\psset{linecolor=gray}\qline(1,4)(7,22)\qline(1,4)(1,21)}
{\psset{linecolor=gray,fillcolor=gray}\pswedge*(1,4){27}{71.565}{90}}
% v=(0,1) e=(0,1)
{\psset{linecolor=black}\qline(0,1)(0,18)\qline(0,1)(0,18)}
\end{pspicture*}
%
%%%%%%%%%%%%%%%%%%%%%%%%%%%%%%%%%%%%%%%%%%%%%%%%%%%%%%%%%%%%%%%}}
% \quad
% \subfigure[$s_0=4$]{\scalebox{0.220}{\input{fan2.4.16x16.tex}}}
% \quad
% \subfigure[$s_0=5$]{\scalebox{0.220}{\input{fan2.5.16x16.tex}}}
% \smallskip
% \subfigure[$s_0=6$]{\scalebox{0.220}{\input{fan2.6.16x16.tex}}}
% \quad
% \subfigure[$s_0=7$]{\scalebox{0.220}{\input{fan2.7.16x16.tex}}}
% \quad
% \subfigure[$s_0=8$]{\scalebox{0.220}{\input{fan2.8.16x16.tex}}}
\vskip -0.50\baselineskip
\caption{\sc Equality Regions of Characteristic Varieties}
\label{fig:classificazione}
\end{figure}
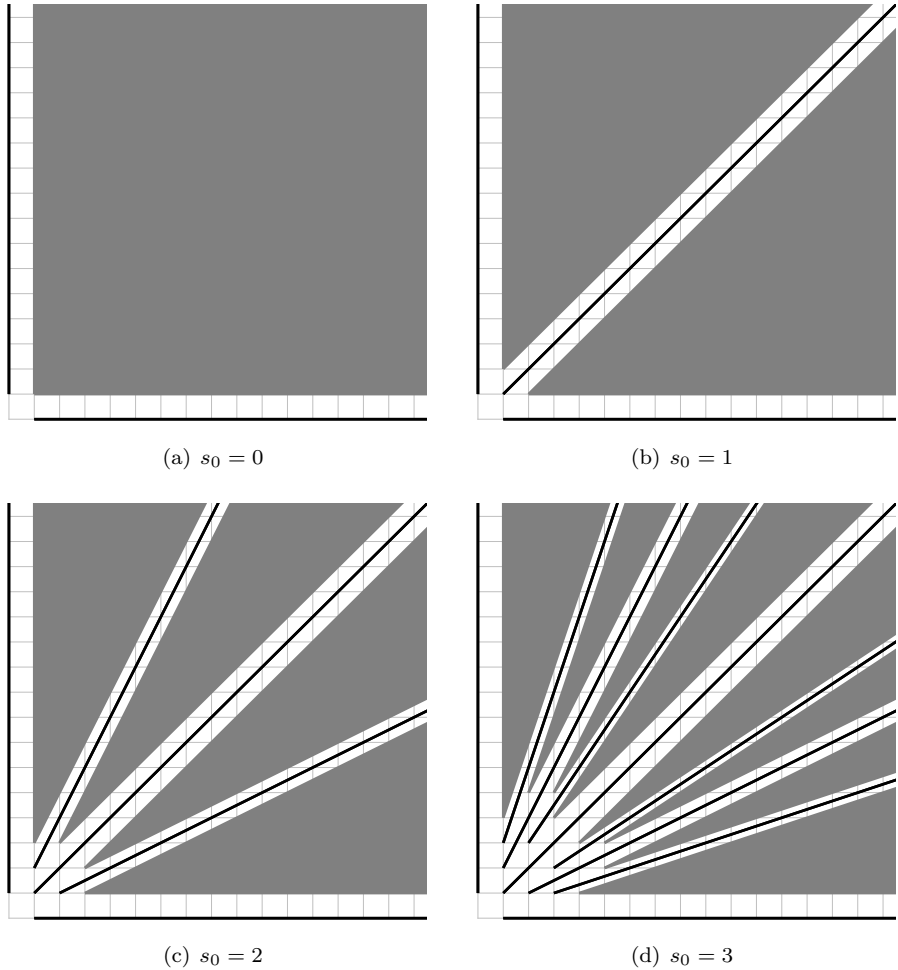
%\vskip -0.25\baselineskip

By \ref{athena}, as soon as $s_0\ge\gamma(L)$,
each of these cones is a subset of precisely one  equivalence class
%${\smash{[\omega]_{\sim_L}}\in
of $\smash{\Omega\mod_{\sim_L}}$.
Thus the results of our experiment let us conjecture an upper bound
for
%the number
$\chi(L)$
%of distinct characteristic varieties of $W\mod L$,
in terms of $\gamma(L)$,
namely,
$\smash{\chi(L)\le 2^{1+\gamma(L)}+1}$.

Our experiment also indicates %, as one can notice from Figure~\ref{fig:classificazione},
that the coordinates $(x_1,x_2)\in\NN_0^2$ of the vertices
of the cones lying in the lower semiquadrant without the diagonal
satisfy precisely the conditions
(a)~$F(1)\le x_1\le F(2+s_0)$,
(b)~$F(0)\le x_2\le F(1+s_0)$,
(c)~${\gcd(x_1,x_2)=1}$, 
and (d)~$x_1 > x_2$,
where $F(s)$ is the $\th{s}$ Fibonacci number,
that is, $F(0)=0$, $F(1)=1$, and $F(s)=F(s-1)+F(s-2)$ for all $s\ge 2$.
For instance, if ${s_0=3}$, 
these coordinates are $(1,0),\,(2,1),\,(3,1),\,(4,1),\,(3,2),\,(5,2),\,(4,3),\,(5,3)$.

So $2^{\gamma(L)}$ is equal to the number of the points $(x_1,x_2)\in\NN_0^2$
satisfying the conditions (a)--(d) with $s_0=\gamma(L)$,
and the experiment indicates that
\(
\chi(L)\le\#\{(x_{\varsigma(1)},x_{\varsigma(2)})\in\NN_0^2\mid
{\varsigma\in \Sigma_2}\mathop{\wedge} %\,
F(1)\!\le x_{1}\le F(2+\gamma(L))\mathop{\wedge} %\,
F(0)\!\le x_{2}\le F(1+\gamma(L))\)
\( \mathop{\wedge} %\,
{\gcd(x_1,x_2)=1}\mathop{\wedge} %\,
x_{1}\ge x_{2}\}
= \#\Sigma_2 \cdot (2^{\gamma(L)}+1)-(\#\Sigma_2-1)
= 2^{1+\gamma(L)}+1,
\)
where $\Sigma_2$ is the $2^{\text{nd}}$ symmetric group.
\end{experiment}

\begin{remark}
Weyl algebras are the prototype of algebras of solvable type, see \cite{Kan},
and as in the polynomial case a universal Gr\"obner basis of $L$ can be constructed
as a union of reduced Gr\"obner bases of $L$.
In \cite[Corollary~0.2]{Ley},
%In \cite{Ley},
%Aschenbrenner and Leykin give
an upper bound is given for the total degree of elements of reduced
Gr\"obner bases of a left ideal of an algebra of solvable type in terms of the total degree
of generators of the ideal, thus in particular an upper bound for $\gamma(L)$.
Therefore if our conjecture is true,
one obtains an upper bound for the cardinality of $\Omega\mod{\sim_L}$
in terms of the total degree of generators of $L$.
\end{remark}

\begin{question} % 1st review
We may ask whether similar upper bounds for $\chi(L)$ as in \ref{6.3} exist
when considering a left ideal $L$ of the $\nth$ Weyl algebra for $n>1$, namely:
(1)~a bound in terms of $n$ and $\gamma(L)$,
%, such as ---and this is purely speculative--- $\chi(L)\le (2n)^{\gamma(L)+1}+n$?
%
and (2)~a bound in terms of Fibonacci numbers.
% , such as, speculatively,
% $\chi(L)\le \#\{(x_{s(1)},x_{s(2)},\ldots,x_{s(2n)})\in\NN_0^{2n}
% \mid
% s \in S_n, \,
% F(2n-1) \le x_1 \le F(2n-1+\gamma(L)), \,
% F(2n-2) \le x_2 \le F(2n-2+\gamma(L)), \,
% \ldots, \,
% F(0) \le x_{2n} \le F(\gamma(L)), \,
% \gcd(x_1,x_2,\ldots,x_{2n})=1, \,
% x_1 \ge x_2 \ge \ldots \ge x_{2n}
% \}$,
% where $S_n$ is the $\nth$ symmetric group?
% The author does not know the answer.
\end{question}

\bibliographystyle{amsplain}
%    Insert the bibliography data here.

\end{document}